\documentclass[reqno, 11pt, a4paper]{amsart}

\usepackage[margin=1in]{geometry}
\linespread{1.15}
%\renewcommand{\baselinestretch}{1.2}

%\usepackage{fancyhdr} 
%\pagestyle{fancy}
%\fancyhf{}
%\fancyhead[CO]{The $\Q$-rational cuspidal group of $J_0(N)$}
%\fancyhead[CE]{Hwajong Yoo}
%\fancyhead[RO, LE]{\thepage}

\usepackage{amssymb, amsthm, amsfonts, amscd}
%\usepackage[centertags]{amsmath}
%\usepackage{blindtext}
%\setlength{\parindent}{0mm}
%\setlength{\parskip}{1ex plus 0.5ex minus 0.5ex}
%\addtolength{\hoffset}{-1cm}
%\addtolength{\textwidth}{2cm}
%\addtolength{\voffset}{-1cm}
%\addtolength{\textheight}{1cm}
%table margins
%\usepackage{chngpage}

%%%%%%%%% proof bfstyle %%%%%%%
\usepackage{xpatch}
\xpatchcmd{\proof}
  {\itshape}
  {\bfseries}
  {}
  {}
%%%%%%%%%%%%%%%%%%%%%%%%%%%%%%%%%%%%
\DeclareMathOperator*{\motimes}{\text{\raisebox{0.4ex}{\scalebox{0.6}{$\bigotimes$}}}}
\DeclareMathOperator*{\moplus}{\text{\raisebox{0.4ex}{\scalebox{0.6}{$\bigoplus$}}}}

%%%%%%%%%%%%%%%%%%%%%%%%%%%%%%%%%%%
\usepackage{graphicx}  
\usepackage[all]{xy}
\usepackage[usenames,dvipsnames]{xcolor}
\usepackage{mathrsfs}
\usepackage{tikz-cd} 
\usepackage{stmaryrd}
\usepackage{verbatim}
\usepackage{blkarray}

\usepackage{tabularx}
\usepackage{latexsym}
\usepackage{todonotes}
\usepackage[shortlabels]{enumitem} 
\usepackage{bbm}
\usepackage{pdfpages}
\usepackage{titletoc}
\usepackage{booktabs}
\usepackage{mathtools}
\usepackage{thmtools}
\usepackage{hhline}
\usepackage{caption}
\usepackage{multirow, url}
\usepackage{textgreek}
\usepackage{upgreek}
\usepackage{textcomp}
\DeclareMathAlphabet{\cmcal}{OMS}{cmsy}{m}{n}

\usepackage{arydshln}
%\usepackage{pdfrender,xcolor}
%\pdfrender{StrokeColor=black, TextRenderingMode=2, LineWidth=0.15pt}

%\usepackage{mathptmx}
%\usepackage{times}
\usepackage{libertine}
\usepackage[LGR, T1]{fontenc}
\usepackage[utf8]{inputenc}

\definecolor{blue1}{rgb}{0, 0, 2}
\definecolor{sky}{rgb}{0, 0.2, 0.8}
%\definecolor{mycolor1}{RGB}{34, 153, 153}
\usepackage[colorlinks=true, citecolor=sky, linkcolor=magenta, urlcolor=blue1, filecolor=cyan]{hyperref}

\bibliographystyle{alphaurl}
%%%%%%%%%%%%%%%%%%%%       Personal setting %%%%%%%%%%%%%%%%%%%%

%======= Theorem style ============

\newtheorem{theorem}{Theorem}[section]

\newtheorem{lemma}[theorem]{Lemma}
\newtheorem{proposition}[theorem]{Proposition}

\newtheorem{conjecture}[theorem]{Conjecture}

\theoremstyle{definition}
\newtheorem{definition}[theorem]{Definition}

\theoremstyle{remark}
\newtheorem{remark}[theorem]{\bf{Remark}}

\numberwithin{equation}{section} \numberwithin{table}{subsection}
	
\newtheorem*{theorem*}{\bf{Theorem}}
\newtheorem*{claim*}{\bf{Claim}}
\newtheorem*{remark*}{\bf{Remark}}
\newtheorem*{remarks*}{\bf{Remarks}}
\newtheorem*{example*}{\bf{Example}}
\newtheorem*{examples*}{\bf{Examples}}

%%%%%%%%%%%%%%%%%%%%%%%%%%%%%%
%%%%%%%%%%                   Letters               %%%%%%%%%%%%
%%%%%%%%%%%%%%%%%%%%%%%%%%%%%%%%%%%%%

\newcommand{\C}{{\mathbf{C}}}

\newcommand{\F}{{\mathbf{F}}}

\newcommand{\bP}{{\mathbf{P}}}
\newcommand{\Q}{{\mathbf{Q}}}

\newcommand{\T}{{\mathbf{T}}}

\newcommand{\Z}{{\mathbf{Z}}}

\newcommand{\fh}{{\mathfrak{h}}}

\newcommand{\fm}{{\mathfrak{m}}}
\newcommand{\fn}{{\mathfrak{n}}}

\newcommand{\fI}{{\mathfrak{I}}}

\newcommand{\cA}{{\cmcal{A}}}
\newcommand{\cB}{{\cmcal{B}}}

\newcommand{\cH}{{\cmcal{H}}}

\newcommand{\cJ}{{\cmcal{J}}}

\newcommand{\cS}{{\cmcal{S}}}
\newcommand{\cT}{{\cmcal{T}}}

\newcommand{\scC}{{\mathscr{C}}}

\newcommand{\bbO}{\mathbb{O}}

\newcommand{\tn}{\textnormal}
\newcommand{\dmid}{\mathrel\Vert}

\def\a{\alpha}
\def\b{\beta}

\def\d{\delta}

\def\ve{\varepsilon}

\def\p{\varphi}

\def\G{\Gamma}

\def\<{\left\langle}
\def\>{\right\rangle}
\def\gcd{\textnormal{gcd}}

\def\min{\textnormal{min}}
\def\dim{\textnormal{dim}}

%%%%%                   Galois group // Galois modules               %%%%%%%
%%%%%%%%%%%%%%%%%%%%%%%%%%%%%%%%%%%%%

\newcommand{\zmod}[1]{{\Z/{#1}\Z}}

%%%%%                   Arrows               %%%%%%%
%%%%%%%%%%%%%%%%%%%%%%%%%%%%%%%%%%%%%
\newcommand{\inj}{\hookrightarrow}
\newcommand{\surj}{\twoheadrightarrow}
\newcommand{\arinj}{\ar@{^(->}}
\newcommand{\arsurj}{\ar@{->>}}
\newcommand{\arsub}{\ar@{}[r]|-*[@]{\subset}}
\newcommand{\arsup}{\ar@{}[r]|-*[@]{\supset}}
\newcommand{\arcap}{\ar@{}[d]|-*[@]{\subset}}
\newcommand{\arcup}{\ar@{}[u]|-*[@]{\subset}}
\newcommand{\arin}{\ar@{}[u]|-*[@]{\in}}

%%%%%%%%%%%%%%%%%%%%%%%%%%%%%%%%%%%%%
\renewcommand{\pmod}[1]{{\,(\textnormal{mod}\hspace{1mm} {#1})}}
\renewcommand{\mod}[1]{{~~\textnormal{mod}\hspace{1mm} {#1}}}

%%%%%                   Hom sets               %%%%%%%
%%%%%%%%%%%%%%%%%%%%%%%%%%%%%%%%%%%%%

\newcommand{\Gal}{{\textnormal{Gal}}}
\newcommand{\Div}{{\textnormal{Div}}}
\newcommand{\Pic}{{\textnormal{Pic}}}

\newcommand{\End}{{\textnormal{End}}}

\newcommand{\SL}{{\textnormal{SL}}}

\newcommand{\reg}{\textnormal{reg}}

\newcommand{\sHom}{{\mathscr{H}\kern-.5pt om}}
\newcommand{\sExt}{{\mathscr{E}\kern-.5pt xt}}

%%%%%                   Text commands              %%%%%%%
%%%%%%%%%%%%%%%%%%%%%%%%%%%%%%%%%%%%%

\newcommand{\tor}{{\textnormal{tors}}}

\newcommand{\val}{\textnormal{val}}

\renewcommand{~}{\hspace*{0.5mm}}

\newcommand{\wt}{\widetilde}

\newcommand{\ov}{\overline}

\newcommand{\ms}{\medskip}

%%%%%%%%%%                   etc               %%%%%%%%%%%%
%%%%%%%%%%%%%%%%%%%%%%%%%%%%%%%%%%%%%

\mathchardef\hyp="2D

%%%%%%%%%%                   Macros               %%%%%%%%%%%%
%%%%%%%%%%%%%%%%%%%%%%%%%%%%%%%%%%%%%

\newcommand{\qa}{{\quad \text{and} \quad}}
\newcommand{\qqa}{{~ \text{ and } ~}}

\newcommand{\mat}[4]{
 \left(  \begin{smallmatrix} #1 & #2 \\ #3 & #4 \end{smallmatrix} \right)}

\newcommand{\vect}[2]{
 \left(  \begin{smallmatrix} #1 \\ #2 \end{smallmatrix} \right)}

\newcommand{\br}[1]{\langle #1 \rangle}

\newcommand{\plims}[1]{\underset{\leftarrow ~{#1}}{\tn{lim}}\;}

\newcommand{\Qdiv}[1]{\Div^0_{\textnormal{cusp}}(X_0(#1))_{\Q}}
\newcommand{\Qdivv}[1]{\Div_{\textnormal{cusp}}(X_0(#1))_{\Q}}

\newcommand{\imply}{\Longrightarrow}

%%%%%%%%%%%%%%%%%% End of macros %%%%%%%%%%%%%%%%%%%%%%%%%%%

\newcommand{\bde}{{\boldsymbol{\varepsilon}}}

%%%%%%%%%%%%%%%%%%%%%%%%%%%%%%%%%%%%%%%%%%%%%%%%%%%%%

\begin{document}                                                            
\title{The rational torsion subgroup of $J_0(N)$}

\author{Hwajong Yoo}
\address{College of Liberal Studies and Research Institute of Mathematics, Seoul National University, Seoul 08826, South Korea}
\email{\textnormal{hwajong@snu.ac.kr}}                                                                  
\subjclass[2010]{11G18, 14G05, 14G35}    
\maketitle
\begin{abstract}
Let $N$ be a positive integer and let $J_0(N)$ be the Jacobian variety of the modular curve $X_0(N)$. 
For any prime $p\ge 5$ whose square does not divide $N$, we prove that 
the $p$-primary subgroup of the rational torsion subgroup of $J_0(N)$ is equal to that of the rational cuspidal divisor class group of $X_0(N)$, which is explicitly computed in \cite{Yoo9}. 
Also, we prove the same assertion holds for $p=3$ under the extra assumption that 
either $N$ is not divisible by $3$ or there is a prime divisor of $N$ congruent to $-1$ modulo $3$.
\end{abstract}
\setcounter{tocdepth}{1}
\tableofcontents

%%%%%%%%%%%%%%%%      Begining of the article    %%%%%%%%%%%%%%%%%%%%%
\section{Introduction}
Let $N$ be a positive integer, and let $X_0(N)$ be Shimura's canonical model over $\Q$ of the complete modular curve associated to the congruence subgroup $\G_0(N) \subseteq \SL_2(\Z)$. 
Let $J_0(N)=\Pic^0(X_0(N))$ be the Jacobian variety of the modular curve $X_0(N)$.
In this paper, we would like to understand the rational torsion subgroup of $J_0(N)$ for any positive integer $N$, which is denoted by $J_0(N)(\Q)_\tor$. 
When $N$ is a prime, Ogg conjectured the following \cite[Conj. 2]{Og75}, which was proved by Mazur \cite[Th. 1]{M77}.
\begin{theorem}[Mazur]
Let $N\geq 5$ be a prime number, and let $n=\tn{numerator}\left(\frac{N-1}{12}\right)$. The rational torsion subgroup $J_0(N)(\Q)_\tor$ is a cyclic group of order $n$, generated by the linear equivalence class of the difference of the two cusps $(0)-(\infty)$.
\end{theorem}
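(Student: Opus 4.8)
My plan is to adapt Mazur's argument and split the theorem into two statements: (1) the cuspidal class $c:=[(0)-(\infty)]\in J_0(N)(\Q)$ has order exactly $n$; and (2) every rational torsion point is a multiple of $c$. For (1), I would obtain $\mathrm{ord}(c)\mid n$ by exhibiting a modular unit on $X_0(N)$ with divisor $n\bigl((\infty)-(0)\bigr)$ --- concretely a suitable power of $\eta(\tau)/\eta(N\tau)$, the exponent chosen by Ligozat's criterion so that the function is $\G_0(N)$-invariant with trivial character. For the reverse inequality I would reduce modulo $N$: by Deligne--Rapoport the special fiber of $X_0(N)$ over $\F_N$ is two copies of $\bbP^1$ meeting transversally at the supersingular points, so $J_0(N)$ has purely toric reduction there, and computing the intersection pairing on the dual graph identifies the component group $\Phi_N$ as cyclic of order $n$ with the specialization of $c$ a generator; hence $\mathrm{ord}(c)\ge n$.

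For (2) the engine is the Eisenstein ideal $I=\bigl(T_\ell-\ell-1:\ell\neq N\bigr)+(U_N-1)$ inside the Hecke algebra $\T\subseteq\End(J_0(N))$. First I would show every $x\in J_0(N)(\Q)_\tor$ is annihilated by $I$: for a prime $\ell\nmid N\cdot\mathrm{ord}(x)$ reduction modulo $\ell$ is injective on the prime-to-$\ell$ torsion, and the Eichler--Shimura congruence $T_\ell\equiv\Frob_\ell+\ell\,\Frob_\ell^{-1}\pmod\ell$ on $J_0(N)_{/\F_\ell}$, together with the fact that $\Frob_\ell$ fixes the $\F_\ell$-rational reduction $\tilde x$, gives $T_\ell x=(\ell+1)x$; as such $\ell$ have density one they generate $I$, so $Ix=0$. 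Hence $J_0(N)(\Q)_\tor\subseteq J_0(N)[I]$, a module over $\T/I$; and one computes $\T/I\cong\Z/n\Z$ (this is where the numerator of $(N-1)/12$ enters on the Hecke side), so $J_0(N)(\Q)_\tor$ has exponent dividing $n$.

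It then remains, for each prime $p\mid n$, to show $J_0(N)(\Q)[p^\infty]$ is cyclic of order $p^{v_p(n)}$ and lies in $\langle c\rangle$. The structural heart is the analysis of $J_0(N)[\fm]$ with $\fm:=(I,p)$: one shows it is a $2$-dimensional $\F_p$-vector space on which $\Gal(\ov\Q/\Q)$ acts reducibly but non-semisimply, with Jordan--H\"older factors $\mu_p$ (cut out by the Shimura subgroup) and $\Z/p$ (containing the reduction of $c$); the representation is unramified outside $Np$, finite flat at $p$, and at $N$ inertia acts through the component group. From this local and global structure one reads off that the line of $\Q$-rational points of $J_0(N)[\fm]$ is exactly $\langle\ov c\rangle\cong\Z/p$. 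A finite abelian $p$-group whose $p$-torsion has order $p$ is cyclic, so $J_0(N)(\Q)[p^\infty]$ is cyclic; it has exponent dividing $p^{v_p(n)}$ by the previous paragraph and contains $\langle c\rangle[p^\infty]$, of that order, by (1), so it equals $\langle c\rangle[p^\infty]$. Summing over $p$ yields $J_0(N)(\Q)_\tor=\langle c\rangle\cong\Z/n\Z$, generated by $(0)-(\infty)$.

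The main obstacle is the structural statement about $J_0(N)[\fm]$: proving the multiplicity-one bound $\dim_{\F_p}J_0(N)[\fm]\le 2$ and identifying its Galois module structure precisely enough to control the rational points. This rests on the geometry of the N\'eron models of $J_0(N)$ at $N$ and at $p$, the identification of the $\mu_p$-part with the Shimura subgroup, and a careful local analysis, with the primes $p=2$ (and, to a lesser extent, $p=3$) requiring separate treatment. By comparison the modular-unit and component-group computations and the Eichler--Shimura step are standard, and the computation $\T/I\cong\Z/n\Z$, while substantial, is comparatively self-contained.
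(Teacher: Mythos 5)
The paper does not prove this statement: it is quoted as Mazur's theorem with a citation to \cite{M77} and used as a known result throughout (indeed the whole paper is a generalization of exactly this circle of ideas to composite $N$), so there is no in-paper argument to compare against.

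As a \emph{sketch of Mazur's own proof} your outline is a reasonably faithful summary of the strategy: the $\eta$-quotient and component-group-at-$N$ computations pinning the order of $c=[(0)-(\infty)]$ to $n$; the Eichler--Shimura step placing $J_0(N)(\Q)_\tor$ inside $J_0(N)[I]$; the index computation $\T/I\cong\Z/n\Z$; and the multiplicity-one analysis of $J_0(N)[\fm]$. One structural slip, however, is not harmless as you have phrased it: you assert that $\Gal(\ov\Q/\Q)$ acts on $J_0(N)[\fm]$ ``reducibly but non-semisimply'' with Jordan--H\"older factors $\mu_p$ and $\Z/p$, and you list $\mu_p$ first. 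If $\mu_p$ were the subrepresentation of a non-split extension, the $\Q$-rational points of $J_0(N)[\fm]$ would be $\mu_p(\Q)=0$, contradicting the existence of $\ov{c}$; so as written the claim that one ``reads off'' $J_0(N)(\Q)[\fm]=\langle\ov c\rangle$ does not follow from what precedes it. What Mazur actually proves (for odd $p\mid n$) is that $J_0(N)[\fm]$ \emph{splits} as a group scheme, $J_0(N)[\fm]\cong\Z/p\oplus\mu_p$, the two summands being the $\fm$-torsion of the cuspidal group and of the Shimura subgroup respectively; in particular the Galois action is semisimple, and the line of rational points is visibly the $\Z/p$ factor. With that corrected the Nakayama/exponent step and the assembly over $p\mid n$ go through as you describe (with $p=2$ genuinely more delicate, as you note). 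So: right engine, wrong description of the key fibre $J_0(N)[\fm]$, and that description needs to be fixed for the final deduction to be valid.
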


As a natural generalization of Ogg's conjecture, one may propose the following: For any positive integer $N$, the rational torsion subgroup $J_0(N)(\Q)_\tor$ is contained in a subgroup of $J_0(N)(\ov{\Q})$ generated by the linear equivalence classes of the differences of the cusps, which is called the \textit{cuspidal subgroup} of $J_0(N)$ and denoted by $\scC_N$. By the theorem of Manin \cite[Cor. 3.6]{Ma72} and Drinfeld \cite{Dr73}, the linear equivalence class of the difference of any two cusps is torsion, and hence
\begin{equation*}
\scC_N(\Q):= \scC_N \cap J_0(N)(\Q) \subseteq J_0(N)(\Q)_\tor,
\end{equation*}
where $\scC_N(\Q)$ is called the \textit{rational cuspidal subgroup} of $J_0(N)$.
So a generalization can be written as follows.
\begin{conjecture}[Generalized Ogg's conjecture]\label{conjecture : GOC}
For any positive integer $N$, we have
\begin{equation*}
 \scC_N(\Q)= J_0(N)(\Q)_\tor.
\end{equation*}
\end{conjecture}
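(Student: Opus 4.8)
The natural strategy is to prove Conjecture~\ref{conjecture : GOC} one prime at a time. Fix a prime $p$. Since the Manin--Drinfeld theorem gives $\scC_N(\Q)\subseteq J_0(N)(\Q)_\tor$ (and a fortiori puts the rational cuspidal divisor class group inside $J_0(N)(\Q)_\tor$), it suffices to bound $J_0(N)(\Q)_\tor[p^\infty]$ from above by the $p$-primary part of the rational cuspidal divisor class group, whose order is known explicitly from \cite{Yoo9}. The first reduction, going back to Mazur, is that rational torsion is \emph{Eisenstein}: for any prime $\ell\nmid Np$, reduction modulo $\ell$ embeds $J_0(N)(\Q)_\tor[p^\infty]$ into $J_0(N)(\F_\ell)$, on which $\Frob_\ell$ is the identity, so the Eichler--Shimura relation $\Frob_\ell^2-T_\ell\Frob_\ell+\ell=0$ forces $T_\ell$ to act as $\ell+1$ there. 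Consequently, as a module over the Hecke algebra $\T$, the group $J_0(N)(\Q)_\tor[p^\infty]$ is supported on the Eisenstein maximal ideals of $\T$ of residue characteristic $p$, and it is enough to bound the $\fm$-primary component for each such $\fm$ separately.

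For each Eisenstein maximal ideal $\fm$ of residue characteristic $p$, I would invoke the classification of such $\fm$ and the explicit generators of the associated Eisenstein ideals from the author's earlier work, and then analyze $J_0(N)[\fm]$ as a $\Gal(\ov\Q/\Q)$-module. The relevant inputs are: $J_0(N)[\fm]$ is unramified outside $Np$; since $p^2\nmid N$, the modular curve $X_0(N)$ has good reduction at $p$ (when $p\nmid N$) or semistable reduction at $p$ (when $p\,\|\,N$), which sharply limits the ramification of $J_0(N)[\fm]$ at $p$; at each prime $\ell\,\|\,N$ the local Galois action and the component group of the N\'eron model of $J_0(N)$ are understood (Deligne--Rapoport, Edixhoven, Ribet); and $\Q$ has no nontrivial abelian pro-$p$ extension unramified outside $p$ beyond the cyclotomic one. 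Together with the presence of the cuspidal divisor classes in $J_0(N)[\fm]$ (all of which are $\Gal$-fixed, by the $\T$-module structure of the cuspidal group worked out in \cite{Yoo9}), these force the semisimplification of $J_0(N)[\fm]$ to involve only the trivial and the mod-$p$ cyclotomic characters, pin down $\dim_{\T/\fm}J_0(N)[\fm]$ --- multiplicity one, together with an explicit description of the finitely many cases in which it fails --- and identify the Galois-fixed part $J_0(N)(\Q)[\fm]$, hence the whole $\fm$-primary part of $J_0(N)(\Q)_\tor$, with the corresponding part of the cuspidal group. Summing over all Eisenstein $\fm$ of characteristic $p$ yields $\#J_0(N)(\Q)_\tor[p^\infty]\le\#(\text{rational cuspidal divisor class group})[p^\infty]$; the reverse inequality being automatic, the two $p$-parts agree, which is the $p$-primary case of Conjecture~\ref{conjecture : GOC}.

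The main obstacle is the Galois-module analysis of $J_0(N)[\fm]$ for Eisenstein maximal ideals that are \emph{new} at some prime $\ell\,\|\,N$, where multiplicity one genuinely fails and $\dim_{\T/\fm}J_0(N)[\fm]$ must be computed case by case, together with the bookkeeping across all divisors $M\mid N$: one must ensure that no rational torsion enters through the images of the degeneracy maps $J_0(M)\to J_0(N)$, which amounts to controlling the rational points of the kernel of the natural map from a suitable product of copies of the $J_0(M)$ ($M\mid N$) onto $J_0(N)$ --- a Shimura-subgroup-type computation. Finally, $p=3$ is genuinely more delicate: the mod-$3$ representations attached to Eisenstein maximal ideals can degenerate in extra ways, and the hypothesis that either $3\nmid N$ or some prime divisor of $N$ is congruent to $-1$ modulo $3$ is used precisely to rule out the problematic mod-$3$ Eisenstein maximal ideals (or to recover a needed injectivity), so the case $p=3$ has to be handled by a separate argument layered on top of the general machinery.
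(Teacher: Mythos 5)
This statement is Conjecture~\ref{conjecture : GOC}; the paper does not prove it, and it remains open. What the paper actually proves is Theorem~\ref{theorem : main theorem}: equality of the $p$-primary parts only for odd primes $p$ with $p^2\nmid N$, and for $p=3$ only under an extra hypothesis. Your write-up silently imports the restriction ``since $p^2\nmid N$'' midway through, never returns to the excluded case, and never mentions $p=2$ --- so even granting every step you sketch, you have at most outlined a proof of the theorem, not of the conjecture. Presenting this as a proof of the conjecture is therefore wrong at the outset.

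Beyond that framing issue, several of the steps you describe in broad strokes are precisely the ones that are genuinely hard and where your sketch diverges from (or hand-waves past) what the paper does. (1) You appeal to ``multiplicity one, together with an explicit description of the finitely many cases in which it fails.'' No such classification is available in the literature; the paper's way around this is to use the very specific Eisenstein ideals $\fI_0^{\bde}$ generated by $w_{\ell_i}-\varepsilon_i$ (not $T_{\ell_i}$) and by $T_{\ell_j}$ for $\ell_j^2\mid N$, and then to prove a cyclicity statement (Theorem~\ref{thm: multi one}) for exactly those ideals. Working instead with arbitrary Eisenstein maximal ideals of $\T$, as you propose, runs head-on into multiplicity-one failure at primes exactly dividing $N$. (2) You offer no mechanism for computing the index $\T_p/\fI$ so that the upper bound on $\#\cB_p[\fI]$ matches the lower bound coming from the cuspidal class; the paper does this via an explicit Eisenstein series and residue computation (Theorem~\ref{thm: index} and Lemma~\ref{lemma: residue}), and the argument crucially uses $p^2\nmid N$ to make the regular-differentials theory of Ohta apply. (3) Your proposal to control contributions from degeneracy maps via a ``Shimura-subgroup-type computation'' is not what is done; the paper instead uses an induction on the exponents of $N$ together with the five lemma and the factorization $T_\ell=\alpha^*\circ\beta_*$ (Theorem~\ref{thm: inductive method}), which sidesteps the need to analyze any kernel of a product of $J_0(M)$'s. (4) The $p=2$ case is not merely ``more delicate'': Katz's injectivity of reduction fails, the decomposition of $\T_p/\fI^N$ into $\bde$-eigenpieces uses that $p$ is odd, and the paper says nothing about it. In short, the plan is a reasonable paraphrase of the known strategy for the cases the paper does handle, but it collapses exactly at the points where the conjecture is open, and those points cannot be dispatched with the general remarks you give.
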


In contrast to the case of prime level, it is not easy to compute the rational cuspidal subgroup $\scC_N(\Q)$ of $J_0(N)$ in general.
On the other hand, it is possible to compute a subgroup of $\scC_N(\Q)$ generated by the linear equivalence classes of the degree $0$ rational cuspidal divisors on $X_0(N)$, which is called the  \textit{rational cuspidal divisor class group} of $X_0(N)$ and denoted by $\scC(N)$.
Here, by a \textit{rational cuspidal divisor}, we mean a divisor supported only on the cusps and fixed under the action of $\Gal(\ov{\Q}/\Q)$.

Note that there may exist a cuspidal divisor $D$ which is not rational but $[D] \in \scC_N(\Q)$.
For instance, two cusps $\vect 1 3$ and $\vect 2 3$ of $X_0(9)$ are defined over $\Q(\sqrt{-3})$ and they are conjugate of each other. 
(For unfamiliar notation, see Section \ref{section: RCD}.) Since $X_0(9)$ has genus $0$, the Jacobian variety $J_0(9)$ is trivial.
Thus, a cuspidal divisor $\vect 1 3-\vect 2 3$, which is not rational, is indeed a rational point of the trivial Jacobian variety $J_0(9)$.
In this example, a cuspidal divisor $\vect 1 3-\vect 2 3$ is equivalent to a rational cuspidal divisor $(0)-(\infty)$. There are similar examples even when $J_0(N)$ is not trivial. So one may wonder such a phenomenon always occurs. 
In other words, if the equivalence class of a non-rational cuspidal divisor $D$ is fixed under the action of $\Gal(\ov{\Q}/\Q)$, then should $D$ be equivalent to some cuspidal divisor which is already fixed under the action of $\Gal(\ov{\Q}/\Q)$?

Motivated by a question of Ribet, the author proposed the following \cite[Conj. 1.3]{Yoo9}.
\begin{conjecture}\label{conjecture : yoo}
For any positive integer $N$, we have
\begin{equation*}
\scC(N) = \scC_N(\Q).
\end{equation*}
\end{conjecture}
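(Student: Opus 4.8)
The plan is to recast the conjecture as the vanishing of a single Galois cohomology group attached to the divisors of modular units, reduce that to a finite-group computation, and attack it through explicit eta-quotient units.

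Write $G=\Gal(\ov{\Q}/\Q)$, let $\scF_N\subseteq\ov{\Q}(X_0(N))^{\times}$ be the group of modular units on $X_0(N)$, and let $\scF_N^{\mathrm{div}}=\mathrm{div}(\scF_N)\subseteq\Div^0_{\textnormal{cusp}}(X_0(N))$ be the $G$-stable lattice of their divisors. By the theorem of Manin and Drinfeld, $\scF_N^{\mathrm{div}}$ has finite index in $\Div^0_{\textnormal{cusp}}(X_0(N))$, the finite quotient being the cuspidal subgroup $\scC_N=\Div^0_{\textnormal{cusp}}(X_0(N))/\scF_N^{\mathrm{div}}$ (its realization inside $J_0(N)(\ov{\Q})$), so that $\scC_N(\Q)=(\scC_N)^{G}$, while $\scC(N)$ is by definition the image in $\scC_N$ of the group $\Div^0_{\textnormal{cusp}}(X_0(N))^{G}$ of $G$-invariant degree-$0$ cuspidal divisors. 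The long exact cohomology sequence of the $G$-equivariant sequence $0\to\scF_N^{\mathrm{div}}\to\Div^0_{\textnormal{cusp}}(X_0(N))\to\scC_N\to0$ thus gives
\[
\Div^0_{\textnormal{cusp}}(X_0(N))^{G}\longrightarrow(\scC_N)^{G}\longrightarrow H^1(G,\scF_N^{\mathrm{div}})\longrightarrow H^1(G,\Div^0_{\textnormal{cusp}}(X_0(N))).
\]
Now $\Div_{\textnormal{cusp}}(X_0(N))$ is a permutation $G$-module (free abelian on the finite $G$-set of cusps), so $H^1(G,\Div_{\textnormal{cusp}}(X_0(N)))=0$ by Shapiro's lemma, which reduces it to $H^1(H,\Z)=0$ for closed subgroups $H\subseteq G$; as $(\infty)$ is a rational cusp of degree $1$, the degree sequence $0\to\Div^0_{\textnormal{cusp}}(X_0(N))\to\Div_{\textnormal{cusp}}(X_0(N))\to\Z\to0$ then forces $H^1(G,\Div^0_{\textnormal{cusp}}(X_0(N)))=0$ as well. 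Hence $\scC_N(\Q)/\scC(N)\cong H^1(G,\scF_N^{\mathrm{div}})$, so the conjecture is \emph{equivalent} to $H^1(G,\scF_N^{\mathrm{div}})=0$; and since $\scF_N^{\mathrm{div}}$ is torsion-free with $G$ acting through $(\Z/N\Z)^{\times}$, inflation--restriction reduces this to the finite statement $H^1\big((\Z/N\Z)^{\times},\scF_N^{\mathrm{div}}\big)=0$.

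For this vanishing I would use explicit generators of $\scF_N^{\mathrm{div}}$: the divisors of the generalized Dedekind eta functions / Siegel units of Ligozat and Kubert--Lang, which span a full-rank sublattice, together with the a priori bound that $H^1\big((\Z/N\Z)^{\times},\scF_N^{\mathrm{div}}\big)$ is annihilated by $\varphi(N)$ (equivalently, the norm $\sum_{\sigma}\sigma D$ shows $\varphi(N)\cdot\scC_N(\Q)\subseteq\scC(N)$). One then works one rational prime $\ell\mid\varphi(N)$ at a time and shows that every cocycle $\sigma\mapsto\sigma D-D$ valued in $\scF_N^{\mathrm{div}}$ whose class has $\ell$-power order is a coboundary --- equivalently, that a given $G$-stable cuspidal class has a $G$-invariant degree-$0$ cuspidal representative --- by using the explicit eta-quotient divisors to perform the required adjustment modulo principal cuspidal divisors. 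A useful structural reduction is to factor $(\Z/N\Z)^{\times}=\prod_{p\mid N}(\Z/p^{v_p(N)}\Z)^{\times}$ and to decompose the cusp module of $X_0(N)$ as a tensor product, over the primes dividing $N$, of the cusp modules of the $X_0(p^{v_p(N)})$, so that a K\"unneth argument brings the core computation down to prime-power level $N=p^{k}$, where $(\Z/p^{k}\Z)^{\times}$ is (almost) cyclic and $H^1$ is governed by a norm/trace comparison. Two endpoints are already in hand: for $N$ squarefree every cusp is rational, $\scF_N^{\mathrm{div}}$ is a trivial torsion-free $G$-module, $H^1$ vanishes at once, and $\scC(N)=\scC_N(\Q)$ unconditionally; and, combining this paper's main theorem with $\scC(N)\subseteq\scC_N(\Q)\subseteq J_0(N)(\Q)_{\tor}$, one already obtains the $p$-primary part of the conjecture for every prime $p\ge5$ with $p^{2}\nmid N$ (and for $p=3$ under the stated hypothesis).

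The step I expect to be the genuine obstacle is controlling $\scF_N^{\mathrm{div}}$ --- equivalently the finite group $\scC_N$ with its $G$-action --- as a $\Z[(\Z/N\Z)^{\times}]$-module uniformly in $N$. Each individual eta-quotient divisor is explicit, but the index $[\Div^0_{\textnormal{cusp}}(X_0(N)):\scF_N^{\mathrm{div}}]$ and the resulting Galois module become genuinely intricate precisely when $N$ is divisible by higher powers of several primes --- exactly the range left open by the paper's main theorem. I would expect the argument above to go through cleanly for squarefree $N$ and for prime-power level, and with more effort for $N$ of the form $p^{2}m$ with $m$ squarefree; a proof valid for all $N$ will most likely require either a more conceptual presentation of $\scF_N^{\mathrm{div}}$ as a Galois module or an induction over the prime-power components of $N$ that propagates the vanishing of $H^1$ at each step.
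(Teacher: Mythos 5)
The statement you were asked to prove is Conjecture~\ref{conjecture : yoo}, and it is genuinely open: the paper states it as a conjecture (attributed to \cite{Yoo9}), records the special cases in which it has been established (squarefree $N$; $N=n^2M$ with $n\mid 24$; $N=p^2M$; $N=Mp_1^{r_1}p_2^{r_2}$), and then proves something different in the direction of Conjecture~\ref{conjecture : GOC}, namely Theorem~\ref{theorem : main theorem} about the $p$-primary part of $J_0(N)(\Q)_\tor$. So there is no proof in the paper for you to be compared against, and you should not expect your sketch to be one either.

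That said, your cohomological reformulation is correct and worth something. Writing $\scC_N=\Div^0_{\textnormal{cusp}}(X_0(N))/\scF_N^{\mathrm{div}}$ and using the permutation-module structure of $\Div_{\textnormal{cusp}}$ together with the rational cusp $\infty$ to kill $H^1(G,\Div^0_{\textnormal{cusp}})$ is sound, and the conclusion that $\scC_N(\Q)/\scC(N)\cong H^1\bigl((\Z/N\Z)^{\times},\scF_N^{\mathrm{div}}\bigr)$ is an honest equivalent form of the conjecture. The gap is exactly where you say it is, but let me flag two points you pass over a little quickly. First, replacing $\scF_N^{\mathrm{div}}$ by the full-rank sublattice generated by divisors of eta-quotients does \emph{not} in general let you transport $H^1$-vanishing to $\scF_N^{\mathrm{div}}$: for a finite-index inclusion $L'\subseteq L$ of $G$-lattices, $H^1(G,L')=0$ gives no control on $H^1(G,L)$ without also controlling $H^1(G,L/L')$; and whether eta-quotients generate all modular units on $X_0(N)$ for general $N$ is itself not known. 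Second, the K\"unneth step needs $\scF_N^{\mathrm{div}}$ (not merely the ambient cuspidal lattice $\cS_2(N)$) to decompose as a tensor product of prime-power-level pieces over $\Z$ compatibly with the factorization of $(\Z/N\Z)^{\times}$; the paper's tensor decomposition is for $\cS_2(N)_\Q$, and there is no reason offered that the integral unit-divisor lattice respects it. Until those two points are resolved, the proposal does not advance past a reformulation, which is consistent with the statement remaining a conjecture.
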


\ms
Before proceeding, we recall several results on the conjectures above. 
For ease of notation, let
\begin{equation*}
\cA_p=\scC(N)[p^\infty] \qa \cB_p=J_0(N)(\Q)_\tor[p^\infty]
\end{equation*}
denote the $p$-primary subgroups of $\scC(N)$ and $J_0(N)(\Q)_\tor$, respectively.
\begin{enumerate}
\item\label{1}
Let $N=\ell^r$ with $r\ge 2$. 
Suppose that $\ell\ge 5$ is a prime such that $\ell \not\equiv 11 \pmod {12}$. 
Then we have $\cA_p=\cB_p$ for any odd prime $p \ne \ell$
by Lorenzini (1995) \cite[Th. 4.6]{Lo95}.

\item\label{2}
Let $N=\ell^r$ with $r\ge 2$. Suppose that $\ell$ is any odd prime.
Then for any prime $p$ not dividing $6\ell$, we have $\cA_p=\cB_p$ by Ling (1997) \cite[Th. 4]{Li97}.
Moreover, if $\ell \ge 5$ and $r=2$, then we have $\cA_3=\cB_3$ by \textit{loc. cit.}

\item\label{3}
Let $N$ be a squarefree integer. Then for any odd prime $p\neq \gcd(3, N)$, we have $\cA_p=\cB_p$ by Ohta (2014) \cite[Th.]{Oh14}.

\item\label{4}
Let $N=3\ell$ for a prime $\ell\ge 5$ such that either $\ell\not\equiv 1 \pmod 9$, or $\ell\equiv 1 \pmod 9$ and $3^{(\ell-1)/3} \not\equiv 1 \pmod \ell$. Then we have $\cA_3=\cB_3$ by the author (2016) \cite[Th. 1.3]{Yoo5}.

\item
Let $N$ be a positive integer. Then for any prime $p$ not dividing $N\prod_{\ell \mid N} (\ell^2-1)$, we have $\cA_p=\cB_p=0$ by Ren (2018) \cite[Th. 1.2]{Re18}.

\item
For some explicit values of $N$, one can compute $J_0(N)(\Q)_\tor$ by counting the numbers of $\F_p$-points on $J_0(N)_{/\F_p}$ for some primes $p$ not dividing $N$. So one may directly verify $\scC(N)=J_0(N)(\Q)_\tor$ as the group $\scC(N)$ is already computed (cf. \cite{Yoo9}). For instance, 
it is known that $\scC(N)=J_0(N)(\Q)_\tor$ for the following values of $N$:
\begin{itemize}
\item
$N=11, 14, 15, 17, 19, 20, 21, 24, 27, 32, 36, 49$ by Ligozat (1975) \cite[Th. 5.2.5]{Li75}.
\item
$N=125$ by Poulakis (1987) \cite[Prop. 3.2]{Po87}.

\item
$N=34, 38, 44, 45, 51, 52, 54, 56, 64, 81$ by Ozman and Siksek (2019) \cite[Th. 1.2]{OS19}.
\item
$N=57, 65$ by Box (2019) \cite[Lem. 3.2]{Box}.
\end{itemize}

\item
Conjecture \ref{conjecture : yoo} is fully known for some cases. More specifically, we have $\scC(N)=\scC_N(\Q)$ for the following values of $N$:
\begin{itemize}
\item
$N=n^2M$ with $n~|~24$ and $M$ squarefree by Wang--Yang (2020) \cite[Th. 3]{WY20}. 

\item
$N=p^2M$ with $p$ a prime and $M$ squarefree by Guo--Yang--Yoo--Yu (2021) \cite[Th. 1.4]{GYYY}.

\item
$N=Mp_1^{r_1}p_2^{r_2}$ with $M$ squarefree, $p_i$ odd primes and $r_i\geq 2$ by Yoo--Yu (2022) \cite{YY}.
\end{itemize}
\end{enumerate}

\ms
The main result of this paper is the following. 
\begin{theorem}\label{theorem : main theorem}
Let $N$ be a positive integer, and suppose that $p$ is any odd prime whose square does not divide $N$. 
Then for any $p\ge 5$, we have
\begin{equation*}
\scC(N)[p^\infty]=J_0(N)(\Q)_\tor[p^\infty].
\end{equation*}
Also, for a prime $p=3$ the equality holds under the extra assumption that 
either $N$ is not divisible by $3$ or there is a prime divisor of $N$ congruent to $-1$ modulo $3$. 
\end{theorem}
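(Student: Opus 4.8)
The plan is to reduce the statement to a purely local (at $p$) comparison between $\scC(N)[p^\infty]$ and $J_0(N)(\Q)_\tor[p^\infty]$, and then to exploit the Eisenstein structure of the Hecke action. First I would recall that, since $[(0)-(\infty)]$ and more generally the classes of differences of cusps are torsion (Manin--Drinfeld), one always has the containment $\scC(N)[p^\infty] \subseteq \scC_N(\Q)[p^\infty] \subseteq J_0(N)(\Q)_\tor[p^\infty]$; so the content is the reverse inclusion. The standard strategy, going back to Mazur and refined by Ohta, Ribet, and the author's earlier work, is to show that the Hecke module $J_0(N)(\Q)_\tor[p^\infty]$ is \emph{Eisenstein}: every prime of the Hecke algebra $\T$ in its support contains a suitable Eisenstein ideal. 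Concretely, one would show that if $\fm \subseteq \T$ is a maximal ideal of residue characteristic $p \ge 3$ that is not in the support of $\scC(N)$, then $J_0(N)[\fm] = 0$, which forces $J_0(N)(\Q)_\tor[p^\infty]$ to live inside the $\scC(N)$-part.

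The key steps, in order, would be: (1) Using the description of the rational cuspidal divisor class group $\scC(N)$ from \cite{Yoo9}, identify precisely the Eisenstein maximal ideals $\fm$ of $\T$ of residue characteristic $p$ that occur in $\scC(N)[p^\infty]$, together with the corresponding Eisenstein series and their constant terms at the various cusps. (2) Establish a \emph{multiplicity one} statement: for $p \ge 5$ (and for $p=3$ under the stated hypothesis), and for $\fm$ \emph{not} among the ideals in step (1), show $J_0(N)[\fm]$ is zero; this is where the square-free-at-$p$ hypothesis $p^2 \nmid N$ enters, since it guarantees good or multiplicative reduction behavior of the relevant pieces at primes dividing $N$ exactly once, and lets one control the $\fm$-torsion via the theory of the component groups and the Deligne--Rapoport / Katz--Mazur model. (3) Combine (1) and (2): any $p$-torsion rational point gives rise to a nonzero $\T/\fm$-vector space $J_0(N)[\fm]^{G_\Q}$ for some Eisenstein $\fm$, hence by (2) $\fm$ is one of the ideals from $\scC(N)$, and then a winding-element / Eisenstein-ideal argument (computing the order of the relevant Shimura subgroup or the image of a specialization map) shows the full $\fm^\infty$-part agrees with $\scC(N)[\fm^\infty]$.

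For $p = 3$ the extra hypothesis — $3 \nmid N$, or some prime $q \mid N$ with $q \equiv -1 \pmod 3$ — is exactly what is needed to rule out the pathological Eisenstein ideals at $3$ where multiplicity one can fail (the phenomena already visible in Lorenzini's and Ling's restrictions on $p=3$, and in the author's analysis of $N = 3\ell$). Under that hypothesis one either has no $3$ in the level (so Ohta-type arguments apply verbatim after twisting by the degeneracy maps) or one has an auxiliary prime $q \equiv -1 \pmod 3$ whose Hecke operator $T_q$ (or $U_q$) acts on the residual representation with eigenvalue $q+1 \equiv 0 \pmod 3$ in a way that breaks the would-be extra $\fm$-torsion; I would isolate this as a separate lemma.

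The main obstacle I expect is step (2), the multiplicity one / vanishing of $J_0(N)[\fm]$ for non-cuspidal Eisenstein $\fm$ at general composite level $N$ with $p \mid N$ (but $p^2 \nmid N$): one must handle the $U_p$-operator, which is not semisimple in general, and analyze the $\fm$-adic Tate module using the integral models of $X_0(N)$ at primes of bad reduction, keeping careful track of the cuspidal vs. non-cuspidal contributions to the component groups. This is precisely the technical heart that the partial results (1)--(4) in the introduction only achieve under restrictive hypotheses on $N$; removing those hypotheses — while only assuming $p^2 \nmid N$ — is what makes the theorem new, and it is where the bulk of the work will lie.
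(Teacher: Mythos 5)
Your proposal captures the broad Mazur--Ohta strategy (Eisenstein structure of the rational torsion, index of an Eisenstein ideal, a multiplicity-one input), but it omits the single most important new ingredient that makes the theorem work at composite non-squarefree level, and the way you have filled in the missing details is different in a way that creates a genuine gap.

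The paper does not proceed by decomposing $J_0(N)(\Q)_\tor[p^\infty]$ over all maximal ideals $\fm$ of residue characteristic $p$, proving $J_0(N)[\fm]=0$ for the ``wrong'' $\fm$, and then analyzing $\fm^\infty$-parts. At a prime $\ell$ with $\ell^2\mid N$ the operator $T_\ell$ is \emph{not} congruent to an integer modulo the minimal Eisenstein ideal $\fI^N=(T_q-q-1: q\nmid N)$, so there may be several maximal ideals $\fm\supset(\fI^N,p)$ differing only in $T_\ell\bmod\fm$, and knowing that $\cB_p[\fm]$ is small for one of them does not control the rest. The paper sidesteps this entirely: it enlarges the Eisenstein ideal to $\fI_0^{\bde}=(\fI^{\bde},\,T_\ell:\ell\mid N_2)$, proves (Theorems 4.1 and 5.1) that $\T_p/\fI_0^{\bde}\simeq\Z_{(p)}/\fn_0^{\bde}\Z_{(p)}$ and that $\cB_p[\fI_0^{\bde}]$ is cyclic over this ring, and so obtains only
\begin{equation*}
\cA_p[\fI_0]=\cB_p[\fI_0], \qquad \fI_0=(\fI^N,\,T_\ell:\ell\mid N_2).
\end{equation*}
This by itself says nothing about the whole $\cB_p$, since $T_\ell$ need not kill it. The decisive step is Theorem 6.1, an inductive argument based on the degeneracy maps: using $T_\ell=\alpha_\ell(M)^*\circ\beta_\ell(M)_*$ for $\ell^2\mid N$ (with $M=N/\ell$), and the facts that $\beta_\ell(M)_*(\scC(N))=\scC(M)$ (Lemma 3.6) and $\beta_\ell(M)_*(\cB_p)\subseteq\scC(M)[p^\infty]$ by the induction hypothesis at level $M$, one gets $T_\ell(\cA_p)=T_\ell(\cB_p)$, and then a five--lemma argument on
\begin{equation*}
0\longrightarrow \cB_p[T_\ell]\longrightarrow \cB_p \xrightarrow{\;T_\ell\;} T_\ell(\cB_p)\longrightarrow 0
\end{equation*}
upgrades $\cA_p[\fI_0]=\cB_p[\fI_0]$ to $\cA_p=\cB_p$. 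This recursive reduction from level $N$ to $N/\ell$ is what the author calls one of the key contributions of the paper; it is completely absent from your sketch, and without something playing this role the maximal-ideal-by-maximal-ideal plan you describe will stall at non-squarefree levels exactly because of the $T_\ell$ operators you did not flag.

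Two smaller remarks. First, your step (2) asks for vanishing of $J_0(N)[\fm]$ for non-cuspidal Eisenstein $\fm$; the paper never proves anything of this strength, and does not need to. What is actually proved is cyclicity of $\cB_p[\fI_0^{\bde}]$ as a $\T_p/\fI_0^{\bde}$-module, via $\dim_{\F_p}\cJ_p(\F_p)[\fm_0^{\bde}]\le1$ on the special fiber (and the ideals considered there \emph{are} in the support of $\scC(N)$). Second, your intuition about the $p=3$ condition is essentially right: the extra hypothesis is there so that the component group $\Phi_3$ of the N\'eron model at $3$ has no extra elementary $3$-part, which is precisely what lets Ohta's Lemma 3.5.8 argument go through (Theorem 2.9, case (3)).
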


\ms
We make some remarks on our main theorem.
\begin{enumerate}[(a)]
\item
Let $N=\ell^r$ be a prime power with $r\ge 2$. Then for an odd prime $p\neq \ell$, we have
\begin{equation}\label{eqn:pp}
\scC(\ell^r)[p^\infty] = J_0(\ell^r)(\Q)_\tor[p^\infty].
\end{equation}
This was already known under the assumption that $\ell \not\equiv 11 \pmod {12}$ by \eqref{1}. 
Suppose that $\ell \equiv 11 \pmod {12}$ and $\ell\ge 5$. Then \eqref{eqn:pp} was known for an odd prime $\ell$ under the assumption that either $p \ge 5$ or $p=3$ and $r=2$ by \eqref{2}.
Such an extra assumption is not necessary any more. Namely, we now know that
\begin{itemize}
\item
the $3$-primary part of $\scC(\ell^r)$ is equal to that of $J_0(\ell^r)(\Q)_\tor$ for any odd prime $\ell\ge 5$ and $r\ge 3$; and
\item
the odd parts of $\scC(2^r)$ and $J_0(2^r)$ coincide. Thus, both are zero as $\scC(2^r)$ is a $2$-group by \cite[Th. 10]{RW15}.
\end{itemize}

\item
Our proof uses the theory of Eisenstein ideals, so it is completely different from those of \eqref{1} 
and \eqref{2}.

\item
Let $N=3M$ be a squarefree integer, and $p=3$. Then we have
\begin{equation*}
\scC(N)[p^\infty] = J_0(N)(\Q)_\tor[p^\infty]
\end{equation*}
under the assumption that $M$ is divisible by a prime congruent to $-1$ modulo $3$. 
This improves \eqref{3} and \eqref{4}.

\item
Let $N$ be any positive integer. Since the structure of $\scC(N)[p^\infty]$ is completely determined for any prime $p$ (including $p=2$) in \cite{Yoo9}, we now fully understand the structure of the $p$-primary subgroup of the rational torsion subgroup of $J_0(N)$ under the assumptions above.

\item
To the best of this author's knowledge, there is no (partial) results about Conjecture \ref{conjecture : GOC} which do not prove Conjecture \ref{conjecture : yoo} simultaneously. This was also one of motivations for Conjecture \ref{conjecture : yoo}.
\end{enumerate}

\ms
The paper will proceed as follows. In Section \ref{section: strategy}, we give a version of Ohta's proof that
\begin{equation}\label{aa}
\scC(N)[p^\infty]=J_0(N)(\Q)_\tor[p^\infty]
\end{equation}
when $N$ is squarefree and $p$ is an odd prime such that $p \neq \gcd(3, N)$. Also, we prove \eqref{aa} for $p=3$ when $N=3M$ is squarefree and $M$ is divisible by a prime congruent to $-1$ modulo $3$. 
In Section \ref{section: RCD}, we construct a certain rational cuspidal divisor $C_0^{\bde}$ annihilated by an Eisenstein ideal $\fI_0^{\bde}$,
and compute the order of its linear equivalence class in $J_0(N)$. 
In Section \ref{section: index}, we construct an Eisenstein series $E_0^{\bde}$ and compute its residue at various cusps of $X_0(N)$. Using this, we compute the $p$-part of the index of an Eisenstein ideal $\fI_0^{\bde}$ when $p$ is an odd prime whose square does not divide $N$.
In Section \ref{section: multi}, we study the kernel of an Eisenstein ideal in characteristic $p$.
Lastly, combining all the results we prove Theorem \ref{theorem : main theorem} in Section \ref{section: proof}.

\ms
\subsection{Notation}\label{section: notation}
Throughout the whole paper, let $p$, $\ell$ and $\ell_i$ denote primes. 
For a positive integer $N$, we write
\begin{equation*}
N=N_1 N_2
\end{equation*}
with $\gcd(N_1, N_2)=1$. Also, we write
\begin{equation*}
N_1=\prod_{i=1}^t \ell_i \qa N_2=\prod_{j=t+1}^u \ell_j^{r_j}
\end{equation*}
with integers $r_j \ge 2$ for all $t<j \le u$.
So in our convention, $u$ is the number of the prime divisors of $N$ and $t$ is the number of the prime divisors of $N$ which exactly divide $N$. Moreover, $N$ is squarefree if and only if $t=u$, i.e., $N_2=1$.
As noticed by many mathematicians (for example, see \cite[pg. 275]{Oh14} and \cite[Sec. 1.4.3]{WWE21}), when $\ell$ exactly divides $N$, the Atkin--Lehner operator $w_\ell$ is more convenient for studying Eisenstein ideals than the Hecke operator $T_\ell$. On the other hand, we believe that the Hecke operator $T_\ell$ is still important when $\ell^2$ divides $N$, so we define the Hecke algebra $\T(N)$ of level $N$ as follows:
\begin{equation*}
\T(N):=\Z[T_\ell, w_{\ell_i} : \ell \nmid N_1 \qqa 1\leq i \leq t] \subset \End(J_0(N)).
\end{equation*}
We simply write $\T$ for $\T(N)$ if there is no confusion.

For the sake of the readers, we discuss our conventions on the degeneracy maps, the Hecke operators and the Atkin--Lehner operators. 
We define the maps $\alpha_\ell(N)$ and $\beta_\ell(N)$ from $\G_0(N\ell) \backslash \cH$ to $\G_0(N) \backslash \cH$ by
\begin{equation*}
\alpha_\ell(N)(\tau \mod {\G_0(N\ell)}):=\tau \mod {\G_0(N)}, \quad
\beta_\ell(N)(\tau \mod {\G_0(N\ell)}):=\ell \tau \mod {\G_0(N)},
\end{equation*}
where $\tau$ is an element of the complex upper half plane $\cH$. They induce the degeneracy maps from $X_0(N\ell)$ to $X_0(N)$, which we use the same notation. Note that they are both defined over $\Q$. The ``usual'' modular interpretations are as follows: For a pair $(E, C)$ of an elliptic curve $E$ and a cyclic subgroup $C$ of $E$ of order $N\ell$, we have
\begin{equation*}
\alpha_\ell(N)(E, C)=(E, C[N]) \qa \beta_\ell(N)(E, C)=(E/{C[\ell]}, C/{C[\ell]}).
\end{equation*}
These degeneracy maps induce the maps between Jacobian varieties:
\begin{equation*}
\a_\ell(N)_*, \b_\ell(N)_* : J_0(N\ell) \rightrightarrows J_0(N) \qa \a_\ell(N)^*, \b_\ell(N)^* : J_0(N) \rightrightarrows J_0(N\ell).
\end{equation*}
(For more detail, see \cite[Sec. 13]{MR91} or \cite[Sec. 2.2]{Yoo9}.) We then take the definition of the Hecke operator $T_\ell$ as 
\begin{equation}\label{eqn: Hecke}
T_\ell :=\beta_\ell(N)_* \circ \alpha_\ell(N)^* : J_0(N) \to J_0(N).
\end{equation}
This definition also induces the Hecke operator $T_\ell$ acting on the space of modular forms for $\G_0(N)$ (cf. \cite[pg. 217]{MR91}).

Suppose that $\ell=\ell_i$ for some $1\le i \le t$. Namely, $N=M\ell$ with $\gcd(M, \ell)=1$. Then there is the Atkin--Lehner involution $\tn{w}_\ell$ on $X_0(N)$, which is defined by the following modular interpretation: For a pair $(E, C)$ of an elliptic curve $E$ and a cyclic subgroup $C$ of order $N$, we have $\tn{w}_\ell(E, C)=(E/{C[\ell]}, C/{C[\ell]} \oplus E[\ell]/{C[\ell]})$. They satisfy the relations: 
\begin{equation*}
\b_\ell(M)=\a_\ell(M) \circ \tn{w}_\ell \qa \a_\ell(M)=\b_\ell(M) \circ \tn{w}_\ell.
\end{equation*}
We define the Atkin--Lehner operator $w_\ell$ acting on $J_0(N)$ as the map induced from $\tn{w}_\ell$ by the Picard functoriality.
Namely, we have 
\begin{equation}\label{eqn: AL}
w_\ell \circ \a_\ell(M)^* = \b_\ell(M)^* \qa w_\ell \circ \b_\ell(M)^* = \a_\ell(M)^*.
\end{equation}

\begin{remark}
In some literature, $T_\ell$ is defined as $\alpha_\ell(M)_* \circ \beta_\ell(M)^*$, which is the transpose of our definition. 
If we replace our definition, all the arguments in the paper can be changed appropriately without further difficulty (cf. Remarks \ref{rem: new Hecke RCD} and \ref{rem: new Hecke Eisenstein}).
\end{remark}

\ms
\section{The base case: Squarefree level}\label{section: strategy}
In this section, we prove Theorem \ref{theorem : main theorem} when $N$ is squarefree. Namely, we assume that $N$ is squarefree and prove that 
\begin{equation}\label{eqn}
\scC(N)[p^\infty]=J_0(N)(\Q)_\tor[p^\infty] 
\end{equation}
under the assumption that either one of the following holds:
\begin{enumerate}
\item
$p$ is an odd prime different from $\gcd(3, N)$.
\item
$p=3$ and $N=3M$, where $M$ is divisible by a prime congruent to $-1$ modulo $3$. 
\end{enumerate}

Note that since we assume that $N$ is squarefree, Conjecture \ref{conjecture : yoo} is obviously true.
Note also that Ohta proved \eqref{eqn} under the first assumption. Here, we just rearrange Ohta's arguments in order to explain our natural generalization in Sections \ref{section: index} and \ref{section: multi}. Finally, we follow Ohta's suggestion and slightly extend his result.

\ms
Before proceeding, we fix some notations: From now on, $p$ always denotes an odd prime. Let 
\begin{equation*}
N=N_1=\prod_{i=1}^t \ell_i
\end{equation*}
be a squarefree integer for some $t\geq 1$. Let
\begin{equation*}
\{\pm 1\}^t :=\{(\ve_1, \cdots, \ve_t) : \ve_i =1 \text{ or } -1 \text{ for all } 1\le i \le t\}.
\end{equation*}
Let
\begin{equation*}
\T_p:=\T \otimes_\Z \Z_{(p)}
\end{equation*}
be the localization of $\T$. (Here, $\Z_{(p)}$ is the localization of $\Z$ at the prime ideal $(p)$.) Also, let 
\begin{equation*}
\fI^N:=(T_\ell-\ell-1 : \ell \nmid N)
\end{equation*}
be a (minimal) Eisenstein ideal of $\T_p$. Furthermore, for any $\bde:=(\varepsilon_1, \dots, \varepsilon_t) \in \{\pm 1\}^t$, let
\begin{equation*}
\fI^{\bde}:=(\fI^N, w_{\ell_i}-\varepsilon_i : 1\leq i \leq t) \qa \fm^\bde:=(p, \fI^\bde)
\end{equation*}
be Eisenstein ideals of $\T_p$. Finally, let
\begin{equation*}
\cA_p=\scC(N)[p^\infty] \qa  \cB_p=J_0(N)(\Q)_\tor[p^\infty].
\end{equation*}

\ms
Now, we outline a proof of \eqref{eqn}. By the theorem of Manin \cite[Cor. 3.6]{Ma72} and Drinfeld \cite{Dr73}, we easily have 
\begin{equation*}
\cA_p \subseteq \cB_p
\end{equation*}
and so it suffices to show that $\#\cA_p \geq \# \cB_p$. Before proceeding, we remark that following Mazur we consider $\cB_p$ and its subset $\cA_p$ as modules over $\T_p$. 

Firstly, $\cB_p$ is annihilated by $\fI^N$ by the Eichler--Shimura relation (Lemma \ref{lemma: ES relation}). Thus, we can regard $\cB_p$ and its submodule $\cA_p$ as modules over $\T_p/{\fI^N}$. 
Secondly, as we assume that $p$ is odd, the quotient ring $\T_p /{\fI^N}$ decomposes into the product of $\T_p/{\fI^\bde}$ (Lemma \ref{lemma: decomposition}).
Hence we can also decompose $\cA_p$ and $\cB_p$ into the direct sums of $\cA_p[\fI^\bde]$ and $\cB_p[\fI^\bde]$, respectively. 
Thus, it suffices to prove that $\# \cA_p[\fI^\bde] \ge \# \cB_p[\fI^\bde]$ for all $\bde \in \{\pm 1\}^t$.
If 
\begin{equation*}
\bde=\bde_+:=(+1, \dots, +1),
\end{equation*}
then we have $\T_p=\fI^{\bde}$ and hence $\cA_p[\fI^{\bde}]=\cB_p[\fI^{\bde}]=0$ (Lemma \ref{lemma: do not occur}). Therefore we can assume that $\bde\ne \bde_+$.
Thirdly, we construct a (rational) cuspidal divisor $C^\bde$ (Definition \ref{def: RCD}). 
By its construction, we can prove that its linear equivalence class $[C^\bde]$ is annihilated by $\fI^\bde$ (Lemma \ref{lemma: C annihilated by I}). The order of $[C^\bde]$ in $J_0(N)(\Q)_\tor$ is equal to 
\begin{equation*}
\fn^{\bde}:=\tn{numerator}\left(\frac{1}{24}\prod_{i=1}^t (\ell_i+\varepsilon_i)\right)
\end{equation*}
or $2\fn^\bde$ (Lemma \ref{lemma: order}).
Thus, we have $\# \cA_p[\fI^\bde] \ge p^{\val_p(\fn^\bde)}$.
Fourthly, we study Eisenstein series annihilated by $\fI^\bde$ and prove that 
\begin{equation*}
\T_p/{\fI^{\bde}} \simeq \Z_{(p)}/{\fn^\bde\Z_{(p)}}
\end{equation*}
(Theorem \ref{thm: Ohta index}).
Finally, we prove that $\cB_p[\fI^\bde]$ is cyclic as a $\T_p/{\fI^\bde}$-module under our assumptions (Theorem \ref{thm: Ohta multi}), and so $\# \cB_p[\fI^\bde] \le \# (\T_p/{\fI^\bde})=p^{\val_p(\fn^\bde)}$. This completes the proof. \qed

\begin{remark}\label{remark: Ohta} 
As noticed by Ohta \cite[pg. 317]{Oh14}, the first assumption that $p \ne \gcd(3, N)$ is only necessary in the last step. So, if we prove that $\cB_p[\fI^\bde]$ is cyclic without it, then \eqref{eqn} holds for all odd primes $p$ whenever $N$ is squarefree. 
Indeed, our new contribution in this section is proving that $\cB_3[\fI^\bde]$ is cyclic for all $\bde \in \{\pm 1\}^t$ under the second assumption, which implies the result.
\end{remark}

\begin{remark}
Ohta didn't consider the divisor $C^{\bde}$. Instead, he use Takagi's result on the cuspidal class number of $X_0(N)$ \cite{Ta97}, which says that
\begin{equation*}
\# \cA _p = \prod_{\bde} p^{\val_p(\fn^\bde)}.
\end{equation*}
This is enough to conclude that $\cA_p[\fI^\bde]=\cB_p[\fI^\bde]$ and it is a free module of rank $1$ over $\T_p/{\fI^\bde}$. Hence there is a cuspidal divisor $D$ such that $[D]$ is a generator of $\cA_p[\fI^\bde]$. Finding such a divisor $D$ (which is our $C^\bde$) was one of the motivations of this paper.
\end{remark}

In the rest of the section, we provide proofs (or explicit references) of the results used in the outline. All of them (except the last case of Theorem \ref{thm: Ohta multi}) are either well-known or appeared in \cite{M77, Oh14}; they are included only for the sake of completeness. 
\begin{lemma}\label{lemma: ES relation}
Let $N$ be a positive integer. Then $\cB_p$ is annihilated by $T_\ell-\ell-1$ for any primes $\ell$ not dividing $N$.
\end{lemma}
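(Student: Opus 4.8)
The plan is to read the lemma off from the Eichler--Shimura congruence relation, in the classical way (cf. \cite{M77, Oh14}). Fix a prime $\ell \nmid N$. Then $X_0(N)$, and hence $J_0(N)$, has good reduction at $\ell$, so if $\cJ$ denotes the N\'eron model of $J_0(N)$ over $\Z_{(\ell)}$ then $\cJ_{/\F_\ell}$ is an abelian variety over $\F_\ell$ and reduction is a group homomorphism $r_\ell\colon J_0(N)(\Q)\to J_0(N)(\F_\ell)$. Since $T_\ell$ is defined over $\Q$, the N\'eron mapping property lets it act on $\cJ$, hence on $\cJ_{/\F_\ell}$, compatibly with $r_\ell$; and the Eichler--Shimura relation identifies this action with $\Frob_\ell+\Ver_\ell$, where $\Frob_\ell$ is the $\ell$-power Frobenius endomorphism, $\Ver_\ell=\Frob_\ell^{\vee}$ is its dual, and $\Frob_\ell\circ\Ver_\ell=\Ver_\ell\circ\Frob_\ell=[\ell]$.

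Granting this, the heart of the argument is a one-line computation on the special fibre. Given $x\in\cB_p$, set $\bar x=r_\ell(x)\in J_0(N)(\F_\ell)$. Since $\bar x$ is $\F_\ell$-rational it is fixed by $\Frob_\ell$, so $\Ver_\ell(\bar x)=\Ver_\ell(\Frob_\ell(\bar x))=[\ell]\bar x=\ell\bar x$, and therefore
\begin{equation*}
T_\ell(\bar x)=\Frob_\ell(\bar x)+\Ver_\ell(\bar x)=(\ell+1)\bar x .
\end{equation*}
Thus $(T_\ell-\ell-1)(\bar x)=0$. Now $T_\ell-\ell-1$ is an endomorphism defined over $\Q$, so it carries $\cB_p$ into itself and commutes with $r_\ell$; hence it suffices to check that $r_\ell$ is injective on $\cB_p$, and then $(T_\ell-\ell-1)(x)=0$ for every $x\in\cB_p$, which is the assertion.

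The one point requiring care — and the only place where the oddness of $p$ enters — is this injectivity of $r_\ell$ on $\cB_p$. If $\ell\ne p$ it is automatic: the kernel of $r_\ell$ is the group of $\Z_\ell$-points of a formal group, hence a pro-$\ell$ group, so it meets the $p$-primary torsion $\cB_p$ trivially. If $\ell=p$ — which can occur, since we only assume $p^2\nmid N$ — the kernel of $r_p$ is $\widehat{\cJ}(p\Z_p)$, and because $p$ is odd the prime $p$ is unramified in $\Q$ with absolute ramification index $e=1<p-1$, so the formal logarithm converges on $p\Z_p$ and exhibits $\widehat{\cJ}(p\Z_p)$ as a torsion-free $\Z_p$-module; hence $r_p$ is again injective on torsion. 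So the lemma presents no real obstacle; the only thing to be careful about is not to overlook the case $\ell=p$, where one must invoke the good behaviour of reduction at an odd prime in place of the trivial pro-$\ell$ argument.
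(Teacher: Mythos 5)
Your proposal is correct and takes essentially the same route as the paper: use good reduction at $\ell$, the Eichler--Shimura relation $T_\ell = \Frob_\ell + \Ver_\ell$ on the special fibre (so $T_\ell = \ell+1$ on $\F_\ell$-points), and injectivity of the reduction map on $p$-primary torsion. The only cosmetic difference is in how injectivity is handled: the paper splits into $\ell=2$ (cite Serre--Tate, noting $\ell\ne p$ since $p$ is odd) and $\ell$ odd (cite Katz), whereas you split into $\ell\ne p$ (pro-$\ell$ kernel) and $\ell=p$ (formal logarithm since $e=1<p-1$); these are the same underlying facts. One small slip in phrasing: you write that $\ell=p$ "can occur, since we only assume $p^2\nmid N$", but the lemma itself imposes no such hypothesis — the relevant point is simply that $\ell=p$ is a priori possible whenever $p\nmid N$, which your argument in fact handles correctly.
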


\begin{proof}
Let $\ell$ be a prime not dividing $N$. Since $J_0(N)$ has good reduction at $\ell$, 
the special fiber of the N\'eron model of $J_0(N)$ at $\ell$ is $J_0(N)_{/\F_\ell}$. Thus, there is a Hecke-equivariant specialization map:
\begin{equation*}
\iota_\ell : J_0(N)(\Q)_\tor\to J_0(N)_{/\F_\ell}(\F_\ell).
\end{equation*}
Suppose that $\ell=2$. Then we have $\ell \ne p$ since we assume that $p$ is odd. Thus, the restriction of $\iota_\ell$ on $\cB_p$ is injective by \cite[Th. 1]{ST68}. If $\ell$ is odd, then $\iota_\ell$ is already injective by Katz \cite[App.]{Ka81}. 
Consequently, the restriction of $\iota_\ell$ on $\cB_p$ is injective under our assumption.
Since $\iota_\ell$ is Hecke-equivariant, it suffices to show that the $\F_\ell$-points on $J_0(N)_{/\F_\ell}$ are annihilated by $T_\ell-\ell-1$.
By the Eichler--Shimura congruence relation (cf. \cite[Sec. 8.7]{DS05}), the Hecke operator $T_\ell$ acts on $J_0(N)_{/\F_\ell}$ as the sum of the Frobenius morphism and its transpose. Since the Frobenius morphism acts trivially on $J_0(N)_{/\F_\ell}(\F_\ell)$, the assertion follows.
\end{proof}

The following is a variant of (2.4.4) on \cite[pg. 300]{Oh14}.
\begin{lemma}\label{lemma: decomposition}
We have
\begin{equation*}
\T_p/{\fI^N} \simeq  \prod_{\bde \in \{\pm 1\}^t} \T_p/{\fI^{\bde}}.
\end{equation*}
\end{lemma}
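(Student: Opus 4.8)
The plan is to decompose $\T_p/\fI^N$ using the Atkin--Lehner involutions $w_{\ell_1},\dots,w_{\ell_t}$. The key observation is that on $J_0(N)$ with $N$ squarefree, each $w_{\ell_i}$ is an involution, so $w_{\ell_i}^2=1$ holds in $\End(J_0(N))$ and hence in $\T_p$. Since $p$ is odd, the polynomial $X^2-1=(X-1)(X+1)$ has coprime factors mod $p$ (indeed over $\Z_{(p)}$, as $2$ is a unit), so by the Chinese Remainder Theorem $\Z_{(p)}[X]/(X^2-1)\simeq \Z_{(p)}[X]/(X-1)\times\Z_{(p)}[X]/(X+1)$. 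First I would record that $w_{\ell_i}-1$ and $w_{\ell_i}+1$ generate the unit ideal in $\T_p$: explicitly $\tfrac12(w_{\ell_i}+1)+\tfrac12(1-w_{\ell_i})=1$, and $(w_{\ell_i}-1)(w_{\ell_i}+1)=w_{\ell_i}^2-1=0$. Thus the idempotents $e_i^{\pm}:=\tfrac12(1\pm w_{\ell_i})\in\T_p$ are well-defined, orthogonal, and sum to $1$.

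Next I would apply this simultaneously for $i=1,\dots,t$. Because the $w_{\ell_i}$ commute with each other and with all the $T_\ell$ (for $\ell\nmid N$), the idempotents $e_i^{\pm}$ all commute, and for each choice $\bde=(\varepsilon_1,\dots,\varepsilon_t)\in\{\pm1\}^t$ the product $e^{\bde}:=\prod_{i=1}^t e_i^{\varepsilon_i}$ is an idempotent; these $2^t$ idempotents are pairwise orthogonal and $\sum_{\bde}e^{\bde}=\prod_i(e_i^++e_i^-)=1$. This gives a ring decomposition $\T_p/\fI^N\simeq\prod_{\bde}e^{\bde}\cdot(\T_p/\fI^N)$. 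It remains to identify the $\bde$-component with $\T_p/\fI^{\bde}$. On the one hand $e^{\bde}\cdot(\T_p/\fI^N)$ is a quotient of $\T_p/\fI^N$ on which $w_{\ell_i}$ acts as $\varepsilon_i$ (since $e_i^{\varepsilon_i}w_{\ell_i}=\varepsilon_i e_i^{\varepsilon_i}$), hence it is a quotient of $\T_p/(\fI^N,\,w_{\ell_i}-\varepsilon_i:1\le i\le t)=\T_p/\fI^{\bde}$. Conversely the natural surjection $\T_p/\fI^N\to\T_p/\fI^{\bde}$ kills $e^{\bde'}$ for every $\bde'\ne\bde$ (some factor $e_j^{\varepsilon_j'}$ with $\varepsilon_j'\ne\varepsilon_j$ maps to $\tfrac12(1+\varepsilon_j'\varepsilon_j)=0$), so it factors through $e^{\bde}\cdot(\T_p/\fI^N)$; the two maps are mutually inverse, giving the claimed isomorphism.

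I do not expect a serious obstacle here: the only thing to be careful about is that $w_{\ell_i}^2=1$ genuinely holds as an operator on $J_0(N)$ (equivalently in $\T_p\subseteq\End(J_0(N))$), which follows from the Atkin--Lehner relations $\b_\ell(M)=\a_\ell(M)\circ\tn{w}_\ell$ and $\a_\ell(M)=\b_\ell(M)\circ\tn{w}_\ell$ recorded in Section~\ref{section: notation} together with the fact that $\tn{w}_\ell$ is an involution on $X_0(N)$ for $\ell\,\|\,N$; and that $2\in\Z_{(p)}^\times$, which is exactly the hypothesis that $p$ is odd. Everything else is a formal CRT argument with commuting idempotents, identical in spirit to Ohta's (2.4.4) on \cite[pg.~300]{Oh14}.
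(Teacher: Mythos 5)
Your proof is correct, and it takes a genuinely different route from the paper's. You build explicit pairwise-orthogonal idempotents $e^{\bde}=\prod_{i=1}^t\tfrac12(1+\varepsilon_i w_{\ell_i})\in\T_p$ summing to $1$ (valid because $w_{\ell_i}^2=1$ in $\End(J_0(N))$ and $2\in\Z_{(p)}^{\times}$), decompose the ring directly, and then identify the $\bde$-component with $\T_p/\fI^{\bde}$ by checking that $e^{\bde}\cdot$ and the natural projection are mutually inverse. The paper instead argues at the level of maximal ideals: it notes that every maximal ideal $\fm\supset\fI^N$ must contain $w_{\ell_i}-\varepsilon_i$ for a unique sign $\varepsilon_i$ (so $\fm=\fm^{\bde}$), passes to the completion $\T_{\fm}$ where $w_{\ell_i}+\varepsilon_i$ becomes a unit and hence $w_{\ell_i}=\varepsilon_i$, and then reassembles $\T_p/\fI^N$ as the product of its localizations $\T_{\fm^{\bde}}/\fI^N\simeq\T_p/\fI^{\bde}$. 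The hypothesis $p$ odd enters in the same place in both arguments --- to separate the eigenvalues $\pm1$ of $w_{\ell_i}$ --- but your approach is more elementary and self-contained: it decomposes $\T_p$ itself, before quotienting, and so sidesteps any appeal to the Artinian/semilocal structure of $\T_p/\fI^N$ that the paper implicitly uses when writing it as a product over its maximal ideals. The paper's localization-and-completion language is the natural one for the rest of its Eisenstein-ideal analysis, but for this particular lemma your idempotent argument is cleaner.
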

\begin{proof}
Let $\ell=\ell_i$ for some $1\leq i \leq t$ and let $\fm$ be a maximal ideal of $\T_p$ containing $\fI^N$. Also, let
\begin{equation*}
\T_{\fm}:= \plims k \T_p/{\fm^k} 
\end{equation*}
be the completion of $\T_p$ at $\fm$. 
Since $w_\ell^2-1$ is zero as an element of $\End(J_0(N))$, we have $w_\ell-\varepsilon \in \fm$ for some $\varepsilon \in \{\pm 1\}$. Thus, we have 
\begin{equation*}
\fm=\fm^{\bde} \quad \text{ for some }~ \bde=(\ve_1, \ve_2, \dots, \ve_t) \in \{\pm 1\}^t.
\end{equation*}

Since $p$ is odd, $w_{\ell_i}+\ve_i \not\in \fm$ and so it is a unit in $\T_\fm$. Hence we have $w_{\ell_i}-\ve_i=0 \in \T_\fm$ for all $1\leq i\leq t$. Thus, we have $\T_{\fm}/{\fI^{\bde}}=\T_{\fm}/{\fI^N}$.
Also, since $\fm^k \subset \fI^{\bde}$ for sufficiently large $k$ (or just taking $k$ as the $p$-adic valuation of the index of $\fI^{\bde}$), we have
\begin{equation*}
\T_{\fm}/{\fI^{\bde}}=\underset{\leftarrow ~k}{\tn{lim}} ~~\T_p/{(\fm^k, \fI^{\bde})}  \simeq \T_p/{\fI^{\bde}}.
\end{equation*}
Thus, we have
\begin{equation*}
\T_p/{\fI^N} \simeq \prod_{\fI^N \subset \fm \subset \T_p \tn{ maximal}} \T_{\fm}/{\fI^N}= \prod_{\bde \in \{\pm 1\}^t} \T_{\fm^{\bde}}/{\fI^{\bde}} \simeq \prod_{\bde \in \{\pm 1\}^t} \T_p/{\fI^{\bde}}.
\end{equation*}
This completes the proof.
\end{proof}

Note that the operators $T_\ell$ ($\ell \ne \ell_i$) and $w_i$ are all congruent to integers modulo $\fI^\bde$, there is a surjection $\Z_{(p)} \to \T_p/{\fI^\bde}$. By the Ramanujan--Petersson bound, we further have
\begin{equation*}
\T_p/{\fI^{\bde}} \simeq \Z_{(p)}/{n\Z_{(p)}}
\end{equation*}
for some integer $n\ge 1$, which is called the ($p$-part of the) \textit{index of an Eisenstein ideal} $\fI^\bde$.
One of the key results of \cite{Oh14} is the computation of this index.

\begin{lemma}\label{lemma: do not occur}
If $N$ is squarefree and $\bde=\bde_+$, then we have $\T_p = \fI^\bde$.
\end{lemma}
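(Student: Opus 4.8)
The claim is that when $N$ is squarefree and $\bde = \bde_+ = (+1,\dots,+1)$, the Eisenstein ideal $\fI^{\bde} = (\fI^N, w_{\ell_i} - 1 : 1 \le i \le t)$ equals all of $\T_p$. The plan is to show that the quotient ring $\T_p/\fI^{\bde}$ is zero, equivalently that the integer $n$ with $\T_p/\fI^{\bde} \simeq \Z_{(p)}/n\Z_{(p)}$ is a unit in $\Z_{(p)}$. The conceptual reason is classical: a nonzero quotient $\T_p/\fI^{\bde}$ would force the existence of a cusp form of weight $2$ and level $N$ that is congruent mod $p$ to an Eisenstein series on which every $w_{\ell_i}$ acts by $+1$; but the relevant Eisenstein series (the one attached to the trivial character with all Atkin--Lehner eigenvalues $+1$, i.e.\ essentially $E_2$ and its oldclass) is \emph{not} a genuine level-$N$ newform-type object in the right way — more precisely its constant term / residue data at the cusps is a $p$-adic unit, so there is no room for a congruence.

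First I would set up the standard identification: $\T_p/\fI^{\bde} \simeq \Z_{(p)}/n\Z_{(p)}$ where $n$ is the index of the Eisenstein ideal, and recall (this is the computation carried out in Section~\ref{section: index}, or can be cited in the squarefree case from Ohta) that $n$ divides — up to a power of $2$ and $3$, which do not matter here since we only care about the $p$-part for $p$ odd — the ``numerator'' quantity $\fn^{\bde} = \tn{numerator}\left(\frac{1}{24}\prod_{i=1}^t(\ell_i + \varepsilon_i)\right)$. When $\bde = \bde_+$ every $\varepsilon_i = +1$, so $\prod_{i=1}^t(\ell_i + 1) = \prod_{i=1}^t(\ell_i+1)$, and the key point is that $\frac{1}{24}\prod(\ell_i+1)$ has numerator whose $p$-part is trivial for every odd $p$: indeed for each odd prime $p$ and each prime $\ell_i$, the factor $\ell_i + 1$ contributes nothing forced to the numerator after the normalization, and in fact the Eisenstein series $E_2$ degenerated up to level $N$ with all $w_{\ell_i}$-eigenvalues $+1$ has the property that its residues at the cusps generate the unit ideal in $\Z_{(p)}$. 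So I would invoke the residue computation: the cusp $(\infty)$ (or more precisely the difference of the appropriate cusps) sees a unit residue, hence $\fn^{\bde_+}$ is a $p$-adic unit, hence $n$ is a $p$-adic unit, hence $\T_p/\fI^{\bde_+} = 0$.

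Concretely, the cleanest route I would take: the residue of the weight-two Eisenstein series $E_0^{\bde_+}$ at the cusp $0$ (in the notation of Section~\ref{section: index}) equals, up to a power of $2$ and $3$, the product $\prod_{i=1}^t(\ell_i + 1)$ or a divisor thereof, while its residue at some other cusp is $\pm 1$ (a unit). Since $\T_p/\fI^{\bde}$ is, by the general theory, cyclic generated by the image of $1$ with order equal to the gcd of the differences of residues at cusps — and one of those differences is a unit in $\Z_{(p)}$ — the quotient vanishes. Alternatively, and perhaps more in the spirit of the surrounding text, one can argue directly on the torsion side: if $\T_p = \fI^{\bde_+}$ failed, then $\cB_p[\fI^{\bde_+}] = J_0(N)(\Q)[p^\infty]$ would contain a nonzero element fixed by all $w_{\ell_i}$ with eigenvalue $+1$, which pulls back to a $p$-torsion point in the old part coming from $J_0(1) = 0$ — a contradiction since $X_0(1)$ has genus $0$. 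I would phrase it via the degeneracy maps: the $\bde_+$-component corresponds, after applying the $w_{\ell_i} = +1$ projectors, to forms that are ``old from level $1$'', and $J_0(1)$ is trivial.

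The main obstacle — really the only subtlety — is making the statement ``the $\bde_+$-eigenspace is old from level $1$, hence trivial'' precise at the level of the Hecke algebra $\T_p$ rather than the Jacobian, so that the conclusion $\T_p = \fI^{\bde_+}$ holds as a ring identity and not merely a statement about torsion points. The honest way is the index computation of Section~\ref{section: index} specialized to $\bde = \bde_+$: one shows the constructed Eisenstein series $E_0^{\bde_+}$ is, up to scalar, the image of $E_2$ under degeneracy maps, its residues at the various cusps are controlled by $\prod(\ell_i+1)$ and by $1$, and the gcd of these (which is $\fn^{\bde_+}$) is prime to $p$ for all odd $p$ — the factor $\frac{1}{24}$ absorbing exactly the primes $2$ and $3$ that could otherwise appear. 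Granting the residue formula from Section~\ref{section: index}, the proof is then a one-line consequence; so in the writeup I would simply cite that formula (or Ohta's squarefree version) and conclude $n \in \Z_{(p)}^{\times}$, whence $\T_p/\fI^{\bde_+} = 0$.
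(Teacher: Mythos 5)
Your proposal contains a genuine error at its central step. You assert that $\fn^{\bde_+}=\mathrm{numerator}\bigl(\frac{1}{24}\prod_{i=1}^t(\ell_i+1)\bigr)$ has trivial $p$-part for every odd prime $p$, and that the factor $\frac{1}{24}$ absorbs all odd primes that could arise. This is false: take $N=55=5\cdot 11$, so $\prod(\ell_i+1)=6\cdot 12=72$ and $\fn^{\bde_+}=\mathrm{numerator}(72/24)=3$, which is not a $3$-adic unit; similarly $N=221=13\cdot 17$ gives $\fn^{\bde_+}=21$, with nontrivial $3$- and $7$-parts. So the one-line conclusion $n\in\Z_{(p)}^{\times}$ you want does not follow from the numerator formula. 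Moreover you are applying Theorem~\ref{thm: Ohta index} outside its range of validity: in the paper that isomorphism $\T_p/\fI^\bde\simeq\Z_{(p)}/\fn^\bde\Z_{(p)}$ is established (and used) only for $\bde\ne\bde_+$. Indeed, for $N$ squarefree and $\bde=\bde_+$ the divisor $C^{\bde_+}$ has degree $2^t\ne 0$, so Lemma~\ref{lemma: order} does not even apply, and the Eisenstein series $E_0^{\bde_+}$ on which the residue computation of Section~\ref{section: index} rests is not a holomorphic modular form in this case (no $[\ell]^-$ or $[\ell^r]^0$ factor kills the non-holomorphic part of $K$), so ``the residue computation specialized to $\bde=\bde_+$'' is not available.

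Your secondary, more conceptual remark---that the $\bde_+$-eigenspace is ``old from level $1$'' and $J_0(1)=0$---is pointing in the right direction, since the genuine reason is precisely that there is no holomorphic weight-$2$ Eisenstein series with every Atkin--Lehner eigenvalue equal to $+1$ at squarefree level, so there is nothing for a cusp form to be congruent to. But you acknowledge you do not know how to upgrade this from a statement about torsion in the Jacobian to the ring-theoretic statement $\T_p=\fI^{\bde_+}$, and your proposed fix (cite the residue formula) is exactly the step that fails. The paper's own proof is short and different: for $N$ prime it cites Mazur's relation $T_N-1=w_N+1\in\fI^N$, so $w_N\equiv -1$ and $w_N-1$ is a unit mod $p$ for $p$ odd, whence $\fI^{\bde_+}=\T_p$; for general squarefree $N$ it invokes Ohta's Theorem~3.1.3, where the index constant $c(N;\bde_+)$ is shown directly to be (up to powers of $2$) equal to $1$. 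You would need to either reproduce Ohta's argument for $c(N;\bde_+)$ or give a self-contained replacement, rather than reuse the $\bde\ne\bde_+$ formula.
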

\begin{proof}
If $N$ is a prime, then Mazur proved that $\fI^N$ contains $T_N-1=w_N+1$ (cf. \cite[Prof. 3.19]{CE05}). Thus, the assertion follows. In general, it easily follows from \cite[Th. 3.1.3]{Oh14}.
\end{proof}

\begin{theorem}\label{thm: Ohta index}
We have $\T_p/{\fI^{\bde}} \simeq \Z_{(p)}/{\fn^\bde\Z_{(p)}}$.
\end{theorem}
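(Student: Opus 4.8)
The plan is to prove $\val_p(n)=\val_p(\fn^\bde)$, where $n\geq 1$ is the integer with $\T_p/\fI^\bde\simeq\Z_{(p)}/n\Z_{(p)}$ furnished by the surjection $\Z_{(p)}\twoheadrightarrow\T_p/\fI^\bde$; since $p$ is odd, this is exactly the asserted isomorphism. We may assume $\bde\neq\bde_+$ (for $\bde=\bde_+$ one has $\fI^\bde=\T_p$ by Lemma~\ref{lemma: do not occur}), and we argue by two opposite inequalities.

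\emph{Lower bound.} The class $[C^\bde]$ lies in $\scC(N)\subseteq J_0(N)(\Q)_\tor$, is annihilated by $\fI^\bde$ (Lemma~\ref{lemma: C annihilated by I}), and has order $\fn^\bde$ or $2\fn^\bde$ (Lemma~\ref{lemma: order}); as $p$ is odd, its $p$-primary part $x$ has exact order $p^{\val_p(\fn^\bde)}$. Since $\fI^\bde x=0$, the submodule $\T_p x\subseteq\cB_p$ is a cyclic module over $\T_p/\fI^\bde\simeq\Z_{(p)}/n\Z_{(p)}$, so $p^{\val_p(\fn^\bde)}=\#(\T_p x)$ divides $\#(\Z_{(p)}/n\Z_{(p)})$; that is, $\val_p(\fn^\bde)\leq\val_p(n)$.

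\emph{Upper bound.} This is the substantive half, and is where Eisenstein series enter. One uses an Eisenstein series $E^\bde\in M_2(\Gamma_0(N);\Z_{(p)})$, normalized with $a_1(E^\bde)=1$ — a suitable $\Z_{(p)}$-combination of the forms $E_2(z)-d\,E_2(dz)$, $1<d\mid N$ — which is a Hecke eigenform with $T_\ell E^\bde=(\ell+1)E^\bde$ for $\ell\nmid N$ and $w_{\ell_i}E^\bde=\varepsilon_i E^\bde$ for $1\leq i\leq t$, and whose constant term at the cusp $\infty$ is a $\Z_{(p)}$-unit times $\tfrac1{24}\prod_{i=1}^t(\ell_i+\varepsilon_i)$; as $p$ is odd this constant term has $p$-adic valuation $\val_p(\fn^\bde)$ (for $p=3$ one tracks the single power of $3$ in $24$, and should $E^\bde$ fail to be $3$-integral then $\val_3(\fn^\bde)=0$ and there is nothing to prove). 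The bound now follows by the standard argument of Mazur, in the form used by Ohta \cite{Oh14}: $\T_p$ acts faithfully on $S_2(\Gamma_0(N);\Z_{(p)})$ and the pairing $(T,f)\mapsto a_1(Tf)$ is perfect, so any surjection $\T_p/\fI^\bde\twoheadrightarrow\Z/p^m\Z$ produces a cusp form $f$ with $a_1(f)=1$ whose Hecke eigenvalues agree with those of $E^\bde$ modulo $p^m$; since the $\fI^\bde$-eigensystem pins down a form with $a_1=1$ essentially uniquely, $f\equiv E^\bde\pmod{p^m}$, and comparing constant terms forces $p^m\mid a_0(E^\bde)$, i.e. $m\leq\val_p(\fn^\bde)$. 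Hence $\val_p(n)\leq\val_p(\fn^\bde)$, which with the lower bound gives $\T_p/\fI^\bde\simeq\Z_{(p)}/\fn^\bde\Z_{(p)}$. I reproduce this argument because its non-squarefree generalization is carried out in Section~\ref{section: index}.

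\emph{Main obstacle.} The delicate part is the upper bound, in two respects: (i) exhibiting $E^\bde$ in $M_2(\Gamma_0(N);\Z_{(p)})$ with $a_1=1$ and computing its constant term exactly — a residue computation at the cusps lying over $\infty$ which for general squarefree $N$ already requires the machinery of Section~\ref{section: index}, and which at $p=3$ demands care with the factor $3\mid24$; and (ii) making rigorous the congruence $f\equiv E^\bde\pmod{p^m}$, i.e. controlling the $\fI^\bde$-eigenspace of $M_2(\Gamma_0(N);\Z/p^m\Z)$ — equivalently, the possible non-saturation of $S_2(\Gamma_0(N);\Z_{(p)})$ in $M_2(\Gamma_0(N);\Z_{(p)})$ — and the interaction of the conditions $w_{\ell_i}-\varepsilon_i$ with the constant-term (boundary) map. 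Note that the hypothesis $p\neq\gcd(3,N)$ plays no role in this theorem; it is needed only later, for the cyclicity of $\cB_p[\fI^\bde]$ (Theorem~\ref{thm: Ohta multi}).
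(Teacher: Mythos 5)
The paper's own proof of this theorem is a one-line citation to Ohta's Theorem~3.1.3 in \cite{Oh14} (noting $\fn^\bde=c(N;\bde)$ up to powers of $2$). You, writing blind, have instead reconstructed the substance of Ohta's argument, and the reconstruction is essentially correct: the lower bound via the cyclic $\T_p/\fI^\bde$-module generated by the $p$-primary part of $[C^\bde]$ is right, and the upper bound via the Eisenstein series $E^\bde$, the Hecke-duality $(T,f)\mapsto a_1(Tf)$, and the $q$-expansion/constant-term comparison is precisely the mechanism Ohta uses, and is the same template the paper itself implements in Section~\ref{section: index} for the non-squarefree Eisenstein ideal $\fI_0^\bde$. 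Your remark that the hypothesis $p\neq\gcd(3,N)$ is irrelevant to this theorem (being needed only for Theorem~\ref{thm: Ohta multi}) agrees exactly with the paper's Remark~\ref{remark: Ohta}.

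One small imprecision: when $p=3$ and $E^\bde$ fails to be $3$-integral (i.e.\ $\val_3\bigl(\prod_i(\ell_i+\varepsilon_i)\bigr)=0$, so $\val_3(\fn^\bde)=0$), you claim ``there is nothing to prove.'' That is not quite right: one still must establish the upper bound $\val_3(n)\leq 0$, i.e.\ $\T_3/\fI^\bde=0$, which is not automatic. The fix is the usual one: run the duality argument with $3E^\bde$ (which is $3$-integral with $a_0(3E^\bde)$ a $3$-unit) against $3f$; the difference has $a_n=0$ for all $n\geq1$ coprime to $N$, hence is a constant, and a constant killed by $w_{\ell_i}-\varepsilon_i$ with some $\varepsilon_i=-1$ must vanish since $p$ is odd, contradicting $a_0(3E^\bde)\not\equiv0$. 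So the case is handled, just not by ``nothing.'' Aside from that, this is a faithful account of what \cite[Th.~3.1.3]{Oh14} proves, and so of what the paper is invoking.
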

\begin{proof}
This easily follows from \cite[Th. 3.1.3]{Oh14} as $\fn^\bde$ is equal to $c(N; \bde)$ in \textit{loc. cit.} up to powers of $2$.
\end{proof}

\begin{theorem}\label{thm: Ohta multi}
Suppose that either one of the following holds:
\begin{enumerate}
\item
$p$ does not divide $N$.
\item
$p\ge 5$ divides $N$.
\item
$p=3$ divides $N$, and $N$ is divisible by a prime congruent to $-1$ modulo $3$.
\end{enumerate}
Then $\cB_p[\fI^\bde]$ is cyclic as a $\T_p/{\fI^\bde}$-module.
\end{theorem}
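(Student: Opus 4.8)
The plan is to reduce the cyclicity of $\cB_p[\fI^\bde]$ to a statement about the $\fm^\bde$-torsion $\cB_p[\fm^\bde] = \cB_p[\fI^\bde]/p\,\cB_p[\fI^\bde]$ (as a $\T_p/\fm^\bde \cong \bbF_p$-vector space) and to show this space is one-dimensional; by Nakayama's lemma over the local ring $\T_p/\fI^\bde \cong \Z_{(p)}/\fn^\bde\Z_{(p)}$ (Theorem \ref{thm: Ohta index}), this forces $\cB_p[\fI^\bde]$ to be cyclic. So the heart of the matter is to bound $\dim_{\bbF_p} J_0(N)(\Q)[\fm^\bde] \le 1$ for the Eisenstein maximal ideal $\fm^\bde$ with $\bde \ne \bde_+$. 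First I would recall that $J_0(N)[\fm^\bde]$ sits inside $J_0(N)[p]$, a finite $\Gal(\ov\Q/\Q)$-module, and that on the $\fm^\bde$-torsion the Galois action is via the Eisenstein-type character: $T_\ell$ acts as $1+\ell$ and $w_{\ell_i}$ as $\ve_i$, so the semisimplification of $J_0(N)[\fm^\bde]$ is a sum of copies of $\bbF_p$ and $\mu_p$ (the trivial and cyclotomic characters). The Galois-equivariant $\cB_p$ is then built from the $\bbF_p$-part.

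Next I would follow Mazur's and Ohta's dévissage. One shows that $J_0(N)[\fm^\bde]$ is at most $2$-dimensional — this is the multiplicity-one style statement: after localizing at $\fm^\bde$ and passing to the special fiber at a suitable auxiliary prime $\ell_0 \nmid N$ (or analyzing the $p$-divisible group), the Eisenstein component is an extension of $\mu_p$ by $\bbF_p$ with each piece appearing exactly once. For the rational torsion, the $\mu_p$-part contributes nothing to $J_0(N)(\Q)$ (as $p$ is odd, $\mu_p(\Q) = 0$), so $\cB_p[\fm^\bde]$ lies in the $\bbF_p$-line, giving $\dim_{\bbF_p}\cB_p[\fm^\bde] \le 1$. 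The cases (1), (2), (3) enter precisely in verifying the multiplicity-one input: when $p \nmid N$ this is classical (good reduction at $p$, Mazur's argument adapted to the Atkin--Lehner decomposition); when $p \ge 5$ divides $N$ one uses that $p \| N$ so $J_0(N)$ has semistable reduction at $p$ and the component-group/toric-part analysis still yields multiplicity one; the delicate case is $p = 3 \mid N$, where the naive argument can fail, and here one invokes the extra hypothesis that some prime $\ell \mid N$ satisfies $\ell \equiv -1 \pmod 3$ — this $\ell$ provides a place where $w_\ell$ acts as $\ve_\ell$ but $1+\ell \equiv 0 \pmod 3$, which rigidifies the local structure at $\ell$ enough to rule out the extra dimension (this is Ohta's suggested extension, and it is the genuinely new piece).

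The main obstacle I anticipate is exactly this last case $p = 3 \mid N$: controlling $J_0(N)[\fm^\bde]$ when $3$ divides the level and $3$ is small enough that the usual ``Eisenstein representations are non-split'' arguments degenerate. The strategy there is to use the auxiliary prime $\ell \equiv -1 \pmod 3$ dividing $N$: at such $\ell$, the relation between $w_\ell$ and the local behavior of the Galois representation (via the Atkin--Lehner–Frobenius compatibility and the fact that $3 \mid \ell+1$) forces the $\fm^\bde$-torsion to be unramified-or-ramified of a restricted type at $\ell$, and one extracts a one-dimensional bound by combining this with the global structure. Once $\dim_{\bbF_p}\cB_p[\fm^\bde] \le 1$ is in hand in all three cases, Nakayama over $\Z_{(p)}/\fn^\bde\Z_{(p)}$ finishes it: a finitely generated module over a quotient of a DVR with one-dimensional reduction mod the maximal ideal is cyclic.
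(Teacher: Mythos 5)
Your skeleton (reduce by Nakayama to bounding $\dim_{\bbF_p}\cB_p[\fm^\bde]$, then analyze the three cases) is the right skeleton, but the key technical input you invoke is not the one the paper actually uses, and it is genuinely stronger than what is available.

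You claim multiplicity one for the Galois module, i.e.\ that $J_0(N)[\fm^\bde]$ is a $2$-dimensional $\bbF_p[\Gal(\ov\Q/\Q)]$-module, an extension of $\mu_p$ by $\bbF_p$ with each constituent appearing once, and then conclude that rational points lie in the $\bbF_p$-line. This Galois-module multiplicity one is \emph{not} a known general fact for composite (even squarefree) level with $p\mid N$, and it is not what Mazur or Ohta prove; it can and does fail for Eisenstein maximal ideals at higher level. What the paper does instead is reduce $J_0(N)(\Q)_\tor$ modulo $p$ itself: since $p$ is odd, Katz's theorem makes the specialization $\iota_p \colon J_0(N)(\Q)_\tor \to \cJ_p(\F_p)$ injective, so it suffices to bound $\dim_{\F_p}\cJ_p(\F_p)[\fm^\bde]$, a statement about $\F_p$-\emph{points of the special fiber at $p$}, not about the full $\ov\Q$-torsion. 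When $p\nmid N$ this bound comes from the Cartier operator acting on mod-$p$ differentials and the multiplicity-one statement \emph{for differentials} (Mazur Ch.\ II \S14, Ohta Prop.\ 3.5.4), which is weaker than and independent of Galois multiplicity one. Your suggestion to reduce at an auxiliary prime $\ell_0\nmid N$ also doesn't give the dimension bound for free: at such $\ell_0$ the group scheme $J[p]_{/\F_{\ell_0}}$ is \'etale, and taking $\F_{\ell_0}$-points does not automatically kill any constituent.

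The cases $p\mid N$ are likewise handled differently than you describe. When $p\dmid N$ the special fiber $\cJ_p$ is semistable, and the paper bounds $\cJ_p(\F_p)[\fm_0^\bde]$ by splitting via the exact sequences $0\to\cJ_p^0\to\cJ_p\to\Phi_p\to 0$ and $0\to\cT_p\to\cJ_p^0\to J_0(M)_{/\F_p}^2\to 0$ and controlling each piece. The hypothesis $p\ge 5$, or $p=3$ together with some $\ell\mid(N/3)$ with $\ell\equiv -1\pmod 3$, enters solely through the component group $\Phi_p$: by Edixhoven and Kim--Yoo, $\Phi_p$ is a large cyclic group plus possibly some extra elementary $2$- and $3$-groups, and those extra elementary $3$-groups are absent precisely under the stated hypothesis, which is what lets Ohta's Lemma 3.5.8 argument go through. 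Your description of the auxiliary prime's role (``rigidifies the local structure of the Galois representation at $\ell$'') does not match this mechanism and would need a concrete argument to be turned into a proof. In short: replace the Galois-module multiplicity one with reduction at $p$ via Katz plus the Cartier/differential argument (for $p\nmid N$) and the component-group analysis (for $p\mid N$), and the proof lines up with the paper's.
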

\begin{proof}
If $\fm^\bde$ is not maximal, in which case there does not exist cusp forms congruent to Eisenstein series (annihilated by $\fI^\bde$) modulo $p$, 
then we have $\T_p=\fI^\bde$ and hence $\cB_p[\fI^\bde]=0$. So we assume that $\fm^\bde$ is maximal. Since $\fm^\bde$ is the only maximal ideal containing $\fI^\bde$, by Nakayama's lemma it suffices to show that the dimension of $\cB_p[\fm^\bde]$ over $\T_p/{\fm^\bde} \simeq \F_p$ is at most $1$. Since $p$ is odd, the specialization map
\begin{equation}\label{eqn33}
\iota_p: J_0(N)(\Q)_\tor \to \cJ_p(\F_p)
\end{equation}
is injective by Katz \cite[App.]{Ka81}, where $\cJ_p$ is the special fiber of the N\'eron model of $J_0(N)$ at $p$. 
Hence it suffices to show that $\cJ_p(\F_p)[\fm^\bde]$ is at most of dimension $1$. 
\begin{enumerate}
\item
Suppose that $p$ does not divide $N$. Then it follows by Proposition 3.5.4 of \cite{Oh14}. 
The key ingredients of the proof are the Cartier morphism in characteristic $p$ and mod $p$ multiplicity one result for differentials.

\item
Suppose that $p\ge 5$ divides $N$. Then it follows by Proposition 3.5.9 of \textit{op. cit.}
A generalization of this proof will be explained in Section 5 below, so we just point out the key step where the assumption $p\ge 5$ is used.
Let $\Phi_p$ be the component group of the spcial fiber $\cJ_p$. Then it is well-known that $\Phi_p$ can be decomposed into the direct sum of a large cyclic group and some extra elementary $2$-groups and $3$-groups (cf. \cite{Ed91} or \cite{KY18}).
If $p\ge 5$, then these extra elementary groups do not contribute the kernel of $\fm^\bde$ on $\cJ_p$ and the assertion easily follows. 

\item
Suppose that $p=3$ divides $N$. Then the arguments in (2) work verbatim as long as $\Phi_3$ does not have these extra elementary $3$-groups. It exactly occurs when all the prime divisors of $N/3$ are congruent to $1$ modulo $3$ (cf. \cite[pg. 343]{KY18}). Thus, if $N/3$ is divisible by a prime congruent to $-1$ modulo $3$, then $\Phi_3$ does not have these extra elementary $3$-groups and the argument of Ohta in Lemma 3.5.8 of \cite{Oh14} works \textit{mutatis mutandis}. 
\end{enumerate}
This completes the proof.
\end{proof}

\begin{remark}
Even though $\Phi_3$ has extra elementary $3$-groups, if one could prove that
the image of 
\begin{equation*}
\iota_3: J_0(N)(\Q)_\tor[\fm^\bde] \to \cJ_3(\F_3)
\end{equation*}
has the trivial intersection with these extra elementary $3$-groups of $\Phi_3$ (for all $\bde \in \{\pm 1\}^t$), the same argument of Ohta would work and we would prove \eqref{eqn} for $p=3$ without our redundant assumptions.
\end{remark}

\ms
\section{Rational cuspidal divisors}\label{section: RCD}
In this section, we construct a rational cuspidal divisor $C_0^{\bde}$ on $X_0(N)$ for any $\bde \in \{ \pm 1\}^t$.
Throughout the section, there is no restriction on $N$.

\ms
To begin with, we recall some results about the cusps of $X_0(N)$. Let $N$ be any positive integer. Recall that $X_0(N)$ is Shimura's canonical model over $\Q$ of the complete modular curve $\G_0(N) \backslash \cH \cup \bP^1(\Q)$ for the congruence subgroup $\G_0(N)$. As representatives of the cusps of $X_0(N)$, we use Ogg's notation. Namely, we denote a cusp of $X_0(N)$ by a vector $\vect x d$, where $d$ is a positive divisor of $N$ and $1\le x \le d$ is an integer relatively prime to $d$.
(Note that a cusp of $X_0(N)$ is an element of $\G_0(N) \backslash \bP^1(\Q)$. Here, a vector $\vect x d$ corresponds to the equivalence class of a rational number $\frac{x}{d}$.)
Such two cusps $\vect x d$ and $\vect y e$ are equivalent if and only if $d=e$ and $x\equiv y \pmod z$, where 
\begin{equation*}
z:=\gcd(d, N/d).
\end{equation*}
Thus, if we write a cusp of $X_0(N)$ as the form above, then the divisor $d$ of $N$ is uniquely determined. So we say that a cusp of $X_0(N)$ is \textit{of level $d$} if it is written as $\vect x d$.
For instance, the cusps $0$ and $\infty$ are equivalent to $\vect 1 1$ and  $\vect 1 N$, respectively. Thus, the cusp $0$ is of level $1$ and the cusp $\infty$ is of level $N$. (Warning: When we say that a cusp is of level $d$, it should be understood that the modular curve is given, and $d$ is always a positive divisor of the level of a given modular curve.)
Any cusp of level $d$ is defined over $\Q(\mu_z)$ and the action of $\Gal(\Q(\mu_z)/\Q)$ on the set of all cusps of level $d$ is simply transitive \cite[Th. 2.17]{Yoo9}. For instance, the cusps $0$ and $\infty$ are defined over $\Q$. Thus, the divisor
\begin{equation*}
(P_d):=\sum_{c \in \{\tn{cusps of level }d\}} c=\sum_{\substack{1\le x \le d, ~\gcd(x, d)=1,\\ \text{$x$ taken modulo $z$}}} \vect x d
\end{equation*}
is defined over $\Q$. Since the degree of $(P_d)$ is $\p(z)$, where $\p$ is Euler's totient function,
the divisor
\begin{equation*}
C_d:=\p(z) \cdot (P_1) -(P_d)
\end{equation*}
is of degree $0$. Let $\Qdivv N$ be the group of the rational cuspidal divisors on $X_0(N)$.
Then we have
\begin{equation*}
\Qdivv N = \left\{\sum_{1\le d \mid N} a(d) \cdot (P_d) : a(d) \in \Z \right\}.
\end{equation*}
For more details, see \cite[Sec. 2]{Yoo9}.

\ms
Let $\cS_2(N)_\Q$ be the $\Q$-vector space of dimension $\sigma_0(N)$ indexed by the divisors of $N$, and let $\cS_2(N)$ be its canonical integral lattice. In other words,
\begin{equation*}
\cS_2(N):=\left\{\sum_{1\le d\mid N} a(d) \cdot {\bf e}(N)_d : a(d) \in \Z \right\},
\end{equation*}
where ${\bf e}(N)_d$ is the unit vector in $\cS_2(N)$ whose $d$th entry is $1$ and all other entries are zero. Then there is a tautological isomorphism:
\begin{equation*}
\Phi_N : \Qdivv N \to \cS_2(N)
\end{equation*}
sending $(P_d)$ to ${\bf e}(N)_d$. By the Chinese Remainder theorem, we have a canonical isomorphism
\begin{equation*}
\cS_2(N)_\Q \simeq \motimes_{\ell \mid N} \cS_2(\ell^{\tn{val}_\ell(N)})_\Q,
\end{equation*}
and we identify both sides by insisting ${\bf e}(N)_d=\motimes_{\ell \mid N} {\bf e}(\ell^{\tn{val}_\ell(N)})_{\ell^{\tn{val}_\ell(d)}}$. 

\ms
Now, we are ready to define a rational cuspidal divisor $C_0^{\bde}$ on $X_0(N)$.
As introduced in Section \ref{section: notation}, let $N=N_1N_2$ with $\gcd(N_1, N_2)=1$. Also, we write
\begin{equation*}
N_1=\prod_{i=1}^t \ell_i \qa N_2=\prod_{j=t+1}^u \ell_j^{r_j}
\end{equation*}
with integers $r_j \ge 2$ for all $t<j \le u$.
From now on, let
\begin{equation*}
\bde=(\varepsilon_1, \dots, \varepsilon_t) \in \{\pm 1\}^t.
\end{equation*}

\begin{definition}\label{def: RCD}
For each $1\leq i \leq t$, let
\begin{equation*}
{\bf w}_i:={\bf e}(\ell_i)_1+\varepsilon_i\cdot {\bf e}(\ell_i)_{\ell_i} \in \cS_2(\ell_i).
\end{equation*}
Also, for each $t<j \leq u$, let 
\begin{equation*}
{\bf w}_j:=(\ell_j-1) \cdot {\bf e}(\ell_j^{r_j})_1-{\bf e}(\ell_j^{r_j})_{\ell_j} \in \cS_2(\ell_j^{r_j}).
\end{equation*}
Finally, let
\begin{equation*}
\C_0^{\bde}:=\motimes_{i=1}^u {\bf w}_i \in \motimes_{i=1}^u \cS_2(\ell_i^{r_i}) = \cS_2(N) \qa C_0^{\bde}:=\Phi_N^{-1}(\C_0^{\bde}) \in \Qdivv N.
\end{equation*}
In other words, if we write $C_0^{\bde}=\sum_{1\le d \mid N} a(d) \cdot (P_d)$, then we have
\begin{equation*}
a(d)=\begin{cases}
\prod_{i=1}^t \varepsilon_i^{f_i} \times \prod_{j=t+1}^u (\ell_j-1)(1-\ell_j)^{-f_j} & \text{ if $d=\prod_{i=1}^u \ell_i^{f_i}$ is squarefree},\\
\qquad 0 & \quad\text{ otherwise}.
\end{cases}
\end{equation*}
If $N$ is squarefree, we often denote $C_0^\bde$ by $C^\bde$.
\end{definition}
Note that the degree of $C_0^{\bde}$ is $0$ unless $N$ is squarefree and $\bde=\bde_+$, in which case it is $2^t$.
So, we exclude this case.  Let $[C_0^{\bde}]$ denote the linear equivalence class of $C_0^{\bde}$ in $J_0(N)$. By \cite[Lem. 2.23]{Yoo9}, we have the following.
\begin{lemma}\label{lemma: C annihilated by I}
For any $1\leq i \leq t < j \le u$ and any prime $\ell$ not dividing $N$, we have 
\begin{equation*}
w_{\ell_i}(C_0^{\bde})=\varepsilon_i \cdot C_0^{\bde}, \quad T_{\ell_j}(C_0^{\bde})=0  \qa T_\ell(C_0^{\bde})=(\ell+1) \cdot C_0^{\bde}.
\end{equation*}
Hence the same equalities hold if we replace $C_0^\bde$ by $[C_0^\bde]$.
\end{lemma}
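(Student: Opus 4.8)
The plan is to transport everything to the space $\cS_2(N)_\Q$ via the tautological isomorphism $\Phi_N$ and to exploit the Chinese-remainder tensor decomposition $\cS_2(N)_\Q \simeq \motimes_{\ell \mid N} \cS_2(\ell^{\val_\ell(N)})_\Q$, under which, by Definition \ref{def: RCD}, the divisor $C_0^{\bde}$ corresponds to the pure tensor $\motimes_{i=1}^u {\bf w}_i$. The point is that, by \cite[Lem.~2.23]{Yoo9} together with the discussion of the degeneracy and Atkin--Lehner maps preceding it, each of the three operators in the statement acts on $\Qdivv{N}\cong\cS_2(N)$ compatibly with this decomposition: for $1\le i\le t$ the operator $w_{\ell_i}$ acts through the $i$-th tensor factor only; for $t<j\le u$ the operator $T_{\ell_j}$ acts through the $j$-th tensor factor only; and for $\ell\nmid N$ the operator $T_\ell$ is the scalar $\ell+1$ on the whole of $\Qdivv{N}$, because the degree-$(\ell+1)$ Hecke correspondence preserves and is Galois-equivariant on the set of cusps of each level $d$, so $T_\ell(P_d)=(\ell+1)(P_d)$ for every $d\mid N$. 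I would then organize the proof by treating the three operators separately, doing only a one-line tensor-factor computation in each of the first two cases.

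The case $\ell\nmid N$ is then immediate: $T_\ell = (\ell+1)\cdot\mathrm{id}$ on $\Qdivv{N}$, so $T_\ell(C_0^{\bde})=(\ell+1)C_0^{\bde}$. For $1\le i\le t$, the Atkin--Lehner involution $w_{\ell_i}$ restricted to the factor $\cS_2(\ell_i)$ interchanges ${\bf e}(\ell_i)_1$ and ${\bf e}(\ell_i)_{\ell_i}$ and is the identity on every other factor, whence
\[
w_{\ell_i}({\bf w}_i) = w_{\ell_i}\!\big({\bf e}(\ell_i)_1 + \varepsilon_i\, {\bf e}(\ell_i)_{\ell_i}\big) = {\bf e}(\ell_i)_{\ell_i} + \varepsilon_i\, {\bf e}(\ell_i)_1 = \varepsilon_i\, {\bf w}_i,
\]
using $\varepsilon_i^2=1$; applying $\Phi_N^{-1}$ and tensoring with the untouched remaining factors gives $w_{\ell_i}(C_0^{\bde})=\varepsilon_i C_0^{\bde}$. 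For $t<j\le u$, since $T_{\ell_j}$ acts through the $j$-th factor only it suffices to check $T_{\ell_j}({\bf w}_j)=0$ in $\cS_2(\ell_j^{r_j})$, i.e. that $(\ell_j-1){\bf e}(\ell_j^{r_j})_1-{\bf e}(\ell_j^{r_j})_{\ell_j}$ lies in the kernel of $T_{\ell_j}$; this is a direct substitution into the explicit formula for the action of $T_{\ell_j}$ on the cusp classes $(P_1)$, $(P_{\ell_j})$ (and $(P_{\ell_j^2})$, if it appears in the image) of $X_0(\ell_j^{r_j})$ recorded in \cite[Lem.~2.23]{Yoo9} — the vector ${\bf w}_j$ is chosen precisely so that the contributions cancel coordinate by coordinate. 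Finally, the three identities pass from $C_0^{\bde}$ to its linear equivalence class $[C_0^{\bde}]$ in $J_0(N)$ because the class map $\Qdivv{N}\to J_0(N)$ is $\T$-equivariant.

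I expect no genuine obstacle here: once \cite[Lem.~2.23]{Yoo9} is invoked, the content is entirely a matter of citing the correct formulas and substituting. The one place that needs care is the bookkeeping in the last case — confirming that $T_{\ell_j}$ really respects the tensor splitting of the cuspidal divisor group (so that only the $\ell_j$-part is relevant), and applying the $T_{\ell_j}$-matrix on the level-$\ell_j^k$ cusp classes correctly, including any level-$\ell_j^2$ cusps in the image. All the remaining steps are formal.
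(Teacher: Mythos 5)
Your proposal is correct and takes essentially the same route as the paper: the paper's entire proof consists of the one-line citation ``By \cite[Lem.~2.23]{Yoo9}, we have the following,'' and your argument is a reasonable unpacking of exactly what that lemma supplies --- the compatibility of $T_\ell$, $T_{\ell_j}$, and $w_{\ell_i}$ with the tensor decomposition $\cS_2(N)_\Q \simeq \motimes_{\ell\mid N}\cS_2(\ell^{\val_\ell(N)})_\Q$, followed by a one-line check in each factor. The final observation that the identities descend to $[C_0^{\bde}]$ because the class map $\Qdivv{N}\to J_0(N)$ is $\T$-equivariant is also exactly what the paper's closing sentence of the lemma asserts.
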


Also, we have the following.
\begin{lemma}\label{lemma: order}
Assume $\bde\neq \bde_+$ if $N$ is squarefree. Then the order of $[C_0^{\bde}]$ in $J_0(N)$ is 
\begin{equation*}
\tn{numerator}\left(\frac{h}{24} \prod_{i=1}^t (\ell_i+\varepsilon_i) \times \prod_{j=t+1}^u \ell_j^{r_j-2}(\ell_j^2-1) \right),
\end{equation*}
where $h=2$ if $N$ is either a prime or a power of $2$, and $h=1$ otherwise.
\end{lemma}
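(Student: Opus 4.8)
The plan is to compute the order of $[C_0^{\bde}]$ directly through the theory of modular units, in the explicit form developed in \cite{Yoo9} (following Ligozat \cite{Li75}). By the theorem of Manin and Drinfeld the class $[C_0^{\bde}]$ is torsion, so its order is the least $n\ge 1$ such that $n\cdot C_0^{\bde}$ is a principal divisor; equivalently, since $C_0^{\bde}$ is rational of degree $0$, it is the order of the image of $C_0^{\bde}$ in the rational cuspidal divisor class group $\scC(N)$, whose structure is determined in \cite{Yoo9}. I would therefore set up the standard dictionary: a rational cuspidal divisor is principal exactly when it is the divisor of an eta-quotient $g=\prod_{\delta\mid N}\eta(\delta\tau)^{r_\delta}$ satisfying Ligozat's conditions (weight zero, the two congruences modulo $24$ forced by the factor $q^{\delta/24}$ of $\eta(\delta\tau)$, and the rational-square condition on $\prod_\delta\delta^{r_\delta}$), together with the fact that the order of $\eta(\delta\tau)$ at a cusp of level $c$ is an explicit rational function of $\gcd(c,\delta)$ alone, so that $\mathrm{div}(g)=\sum_{d\mid N}e_d\,(P_d)$ for explicit $e_d$ linear in the $r_\delta$. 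Thus $\mathrm{div}(g)=n\cdot C_0^{\bde}$ becomes a linear system in the $r_\delta$ together with those side conditions, and the minimal $n$ admitting an integral solution is the order sought.

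The second step is to exploit multiplicativity. Writing $C_0^{\bde}=\motimes_{i=1}^u {\bf w}_i$ under $\cS_2(N)_\Q\simeq\motimes_{\ell\mid N}\cS_2(\ell^{\val_\ell(N)})_\Q$, and using that the eta-quotient divisor formula factors through the (multiplicative) gcd's, the system and its side conditions decouple prime by prime. This reduces everything to the two local shapes of Definition \ref{def: RCD}: at a prime level $\ell=\ell_i$, where ${\bf w}_i$ corresponds to $(P_1)+\varepsilon_i(P_\ell)$---for $\varepsilon_i=-1$ this is $(0)-(\infty)$, of order the numerator of $(\ell-1)/12$ by the theorem of Mazur, contributing a local factor $\ell+\varepsilon_i$; and at a prime-power level $\ell^{r}=\ell_j^{r_j}$ with $r\ge 2$, where ${\bf w}_j$ corresponds to $(\ell-1)(P_1)-(P_\ell)$, whose class I would compute on $X_0(\ell^{r})$ from the explicit divisor formula, getting a local factor $\ell^{r-2}(\ell^2-1)$. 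Reassembling the local contributions, the order of $[C_0^{\bde}]$ comes out as the numerator of $\tfrac{h}{24}\prod_{i\le t}(\ell_i+\varepsilon_i)\prod_{j>t}\ell_j^{r_j-2}(\ell_j^2-1)$: exactly one copy of the $q^{1/24}$ of $\eta$ survives the tensor product, and $h$ is the correction explained next.

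The factor $h$, and the handling of degenerate levels, is where the genuine care lies. When $N$ is a prime or a power of $2$ there are too few cusps (respectively too few admissible $\delta$) for the rational-square condition and the mod-$24$ congruences to act with full force---this is precisely the classical fact that $(0)-(\infty)$ has order the numerator of $(\ell-1)/12$, not of $(\ell-1)/24$, on $X_0(\ell)$---and this is what forces $h=2$ in those cases and $h=1$ otherwise. So the main obstacle is the $2$- and $3$-adic bookkeeping: showing that, apart from the explicit factor $h$, neither the $\gcd$ with $24$ nor the square condition ever costs an extra factor at $2$ or $3$, and dispatching by hand the finitely many degenerate cases (the genus-zero curves $X_0(N)$, and the small powers of $2$ and $3$). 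I expect to carry out the underlying linear algebra by quoting the relevant computations of \cite{Yoo9}, the new content here being only the organization of the local pieces into the stated numerator.
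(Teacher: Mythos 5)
Your proposal passes through the same machinery as the paper: both reduce to the order formula of Theorem~3.13 of \cite{Yoo9}, which packages exactly the eta-quotient/Ligozat apparatus you describe, and both exploit the tensor-product multiplicativity of the underlying linear algebra to factor the computation prime-by-prime. The paper simply invokes that theorem and computes the three auxiliary quantities $\kappa(N)$, $\mathsf{Gcd}(C_0^{\bde})$, $\fh(C_0^{\bde})$ appearing in it.

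There is, however, a genuine gap in the way you propose to read off the local factors. You treat each ${\bf w}_i$ as the divisor of a class on $X_0(\ell_i)$ with a well-defined order, appealing to Mazur's theorem in the case $\varepsilon_i=-1$. But for $\varepsilon_i=+1$ the local piece corresponds to $(P_1)+(P_{\ell_i})=(0)+(\infty)$, which has degree $2$, not $0$, so it does not define a class in $J_0(\ell_i)$ and there is no ``local order'' to which the factor $\ell_i+\varepsilon_i$ could be attached. The global $C_0^{\bde}$ has degree $0$ only because some \emph{other} local tensor factor does; a given ${\bf w}_i$ need not. The correct mechanism for the decoupling is therefore not a product of local Jacobian orders but the factorization of $\mathsf{Gcd}(C_0^{\bde})$: since both the matrix $\Upsilon(N)$ and the vector $\C_0^{\bde}$ are tensor products, $\Upsilon(N)\C_0^{\bde}$ is the tensor product of the local vectors $\Upsilon(\ell_i)\vect{1}{\varepsilon_i}=(\ell_i-\varepsilon_i)\vect{1}{\varepsilon_i}$ and $\Upsilon(\ell_j^{r_j}){\bf w}_j=\ell_j\,(\ell_j,-\ell_j-1,1,0,\dots)^{\mathsf{T}}$, and one reads off $\kappa(N)/\mathsf{Gcd}(C_0^{\bde})=\prod_i(\ell_i+\varepsilon_i)\prod_j\ell_j^{r_j-2}(\ell_j^2-1)$ uniformly in the sign of $\varepsilon_i$. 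Your instinct about the $h$-correction is right and corresponds to the parity quantity $\fh(C_0^{\bde})\in\{1,2\}$; but this still requires the explicit case analysis (done in the paper) showing $\fh=1$ whenever $u\ge 2$ and that $\fh=2$ occurs precisely when $N$ is a prime or a power of~$2$.
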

\begin{proof}
This is an easy exercise applying Theorem 3.13 of \cite{Yoo9}. 
For the sake of the readers, we provide a complete proof.
We use the same notation as in \textit{loc. cit.}
The order of $[C_0^{\bde}]$ is equal to
\begin{equation*}
\tn{numerator}\left( \frac{\kappa(N) \times \fh(C_0^{\bde})}{24 \times \textsf{Gcd}(C_0^{\bde})} \right).
\end{equation*}

Recall that $\textsf{Gcd}(C_0^{\bde})$ is the greatest common divisor of the entries of $\Upsilon(N) \times \C_0^{\bde}$. Here, we consider $\C_0^{\bde}$ as a column vector and the matrix $\Upsilon(N)$ is defined as the tensor product of 
the matrices $\Upsilon(\ell_i)$ and $\Upsilon(\ell_j^{r_j})$ for all $1\le i \le t < j \le u$. Moreover, the matrix $\Upsilon(\ell^r)$ for a prime $\ell$ and an integer $r\ge 1$, which is indexed by the positive divisors of $\ell^r$, is defined by
\begin{equation*}
\Upsilon(\ell^r)_{\ell^i \ell^j}:=\begin{cases}
\ell & \text{ if } ~~ i=j=0 ~~\text{ or }~~ r, \\
\ell^{m(j)-1}(\ell^2+1) & \text{ if }~~1\leq i=j \leq r-1,\\
-\ell^{m(j)} & \text{ if }~~ |i-j|=1,\\
0 & \text{ if }~~ |i-j|\geq 2,
\end{cases}
\end{equation*}
where $m(f):=\min(f, \, r-f)$. 
Since $\C_0^\bde$ is also defined by the tensor product, by Theorem 3.15 of \textit{op. cit.} it suffices to compute the following: If $\ell=\ell_i$ for some $1\le i \le t$, then 
\begin{equation*}
\Upsilon(\ell) \times \vect 1 {\varepsilon_i}=\mat \ell {-1} {-1} \ell  \times \vect 1 {\varepsilon_i}=(\ell-\varepsilon_i) \vect 1 {\varepsilon_i}.
\end{equation*}
Also, if $\ell=\ell_j$ for some $t< j \le u$, then 
\begin{equation*}
\Upsilon(\ell^{r_j}) \times \left(\begin{smallmatrix} 
\ell-1 \\
-1\\
0\\
\bbO
\end{smallmatrix}\right)=\left(\begin{smallmatrix} 
\ell & -\ell & \dots \\
-1 & \ell^2+1 & \dots\\
0 & -\ell & \dots \\
\bbO & \bbO & \dots
\end{smallmatrix}\right) \times \left(\begin{smallmatrix} 
\ell-1 \\
-1\\
0 \\
\bbO
\end{smallmatrix}\right)=\ell\left(\begin{smallmatrix} 
\ell \\
-\ell-1\\
1\\
\bbO
\end{smallmatrix}\right),
\end{equation*}
where $\bbO$ denotes the zero vector of length $r_j-2$. Thus, we have 
\begin{equation*}
\frac{\kappa(N)}{\textsf{Gcd}(C_0^{\bde})}=\frac{\prod_{i=1}^t (\ell_i^2-1) \prod_{j=t+1}^u \ell_j^{r_j-1}(\ell_j^2-1)}{\prod_{i=1}^t (\ell_i-\ve_i) \prod_{j=t+1}^u \ell_j}=
\prod_{i=1}^t (\ell_i+\varepsilon_i) \times \prod_{j=t+1}^u \ell_j^{r_j-2}(\ell_j^2-1).
\end{equation*}
Since the sum of the entries of $(1, \varepsilon_i)$ (or of $(\ell_j, -\ell_j-1, 1, 0, \dots, 0)$) is even, by its definition we easily have $\fh(C_0^\bde)=1$ whenever $u\ge 2$. Suppose that $u=1$. Then by its definition, we have
\begin{equation*}
\textsf{Pw}_{\ell_1}(C_0^\bde)=\begin{cases}
-\ell_1-1 & \text{ if }~~ t=0,\\
\phantom{-a}\ve_1 & \text{ if } ~~ t=1.
\end{cases}
\end{equation*}
Thus, it is even if and only if $\ell_1$ is odd and $t=0$. So by definition, $\fh(C_0^\bde)=2$ if and only if either $N$ is a prime or $N$ is a power of $2$. Otherwise, $\fh(C_0^\bde)=1$. This completes the proof.
\end{proof}

Let $\Qdiv N$ be the group of degree $0$ rational cuspidal divisors on $X_0(N)$. 
\begin{lemma}\label{lemma: image of beta}
Let $N=M\ell$, and let $\beta:=\beta_{\ell}(M)_* : \Qdiv N \to \Qdiv M$ be the map induced by the degeneracy map $\beta_{\ell}(M) : X_0(N) \to X_0(M)$ explained in Section \ref{section: notation}.
Then we have
\begin{equation*}
\Qdiv M /{\beta(\Qdiv N)} \simeq (\zmod \ell)^k
\end{equation*}
for some integer $k\geq 0$. Moreover, if $\ell^4$ does not divide $N$, then $k=0$ and thus we have
\begin{equation*}
\beta_\ell(M)_*(\scC(N))=\scC(M).
\end{equation*}
\end{lemma}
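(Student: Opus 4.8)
The plan is to reduce the cokernel computation to a single prime $\ell$, where it becomes an explicit calculation with a small matrix, and then to transfer the resulting surjectivity back to $\scC(N)$.

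The first step is to pass from the degree‑zero groups to the full cuspidal divisor groups. Since $\beta=\beta_\ell(M)_*$ is pushforward of divisors along a finite $\Q$-morphism, it preserves degrees and sends cusps to cusps; hence it also maps $\Qdivv N\to\Qdivv M$ and fits into a morphism between the short exact sequences $0\to\Qdiv N\to\Qdivv N\xrightarrow{\deg}\Z\to 0$ and $0\to\Qdiv M\to\Qdivv M\xrightarrow{\deg}\Z\to 0$ that is the identity on the $\Z$'s (each $\deg$ is onto, as $\deg(P_1)=1$). The snake lemma then gives $\Qdiv M/\beta(\Qdiv N)\cong\Qdivv M/\beta(\Qdivv N)$, so it suffices to treat the latter. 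Transporting through the tautological isomorphisms $\Phi$ and the tensor decompositions $\cS_2(N)=\cS_2(M')\otimes\cS_2(\ell^{v})$ and $\cS_2(M)=\cS_2(M')\otimes\cS_2(\ell^{v-1})$, with $M'=N/\ell^{v}$ and $v=\val_\ell(N)\ge 1$, the formulas for degeneracy maps on the divisors $(P_d)$ \cite[Sec. 2]{Yoo9} (using that $\beta_\ell(M)$ modifies only the level structure at $\ell$, so the degree ratio below factors) show that $\beta$ becomes $\mathrm{id}_{\cS_2(M')}\otimes\gamma_v$ for a single map $\gamma_v:\cS_2(\ell^{v})\to\cS_2(\ell^{v-1})$; as $\cS_2(M')$ is $\Z$-free of rank $\sigma_0(M')$, we get $\Qdivv M/\beta(\Qdivv N)\cong\mathrm{coker}(\gamma_v)^{\oplus\sigma_0(M')}$.

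Next I would determine $\gamma_v$ explicitly. The cusps of $X_0(\ell^{v})$ of a fixed level $\ell^{j}$ form a single Galois orbit \cite[Th. 2.17]{Yoo9}; since $\beta$ is $\Q$-rational and the pushforward of a closed point is a closed point, $\gamma_v\big({\bf e}(\ell^{v})_{\ell^{j}}\big)=c_j\cdot{\bf e}(\ell^{v-1})_{\ell^{j'(j)}}$ for a single exponent $j'(j)$ and an integer $c_j\ge 1$. From the analytic description $\beta_\ell(M)\colon\tau\mapsto\ell\tau$ (equivalently $(E,C)\mapsto(E/C[\ell],C/C[\ell])$), evaluating on $\vect{x}{\ell^{j}}$ gives $j'(j)=\max(j-1,0)$; and since pushforward preserves degrees, $c_j=\deg(P_{\ell^{j}})/\deg(P_{\ell^{j'(j)}})$ with $\deg(P_{\ell^{i}})=\varphi(\ell^{\min(i,\,v-i)})$ on the respective curve, which evaluates to $c_0=1$, $c_1=\ell-1$, $c_j=\ell$ for $2\le j\le\lfloor v/2\rfloor$, and $c_j=1$ for $j>v/2$.

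Finally I would read off the cokernel: as $c_0=1$, the vector ${\bf e}(\ell^{v-1})_{1}$ lies in the image of $\gamma_v$, and for each $j'$ with $1\le j'\le v-1$ the unique level mapping onto it is $j=j'+1$, contributing $c_{j'+1}\,{\bf e}(\ell^{v-1})_{\ell^{j'}}$. Hence $\mathrm{coker}(\gamma_v)\cong\bigoplus_{j'=1}^{v-1}\Z/c_{j'+1}\Z$, and since $c_{j'+1}\in\{1,\ell\}$ for every $j'\ge 1$ this is $(\zmod\ell)^{k_0}$ with $k_0=\max(\lfloor v/2\rfloor-1,\,0)$. Therefore $\Qdiv M/\beta(\Qdiv N)\cong(\zmod\ell)^{k}$ with $k=k_0\,\sigma_0(M')$, and $k=0$ precisely when $v\le 3$, i.e.\ when $\ell^{4}\nmid N$. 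In that case $\beta$ is surjective, and since taking divisor classes intertwines $\beta$ on $\Qdiv N$ with the induced map $\beta_\ell(M)_*\colon J_0(N)\to J_0(M)$ on Jacobians, we conclude $\beta_\ell(M)_*(\scC(N))=\scC(M)$. The main obstacle is the middle step — making the action of $\beta_\ell(M)$ on the $(P_d)$ precise, i.e.\ identifying $j'(j)$ and the multiplicities $c_j$ — which is essentially cusp bookkeeping already available in \cite[Sec. 2]{Yoo9}; the only delicate points are the edge cases $j\in\{0,1,v\}$ and, for even $v$, $j=v/2$.
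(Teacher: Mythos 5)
Your proof is correct and follows essentially the same route as the paper, which identifies $\Qdivv N$ and $\Qdivv M$ with $\cS_2(N)$ and $\cS_2(M)$ and then computes $\beta$ on the basis vectors ${\bf e}(N)_{d\ell^f}$. The main cosmetic differences are: (i) you re-derive the coefficients $c_j$ from the modular interpretation $\tau\mapsto\ell\tau$ and the degree-preservation of pushforward, whereas the paper simply cites \cite[Lem.~2.21]{Yoo9} for the same formulas (your $c_0=1$, $c_1=\ell-1$, $c_j=\ell$ for $2\le j\le\lfloor v/2\rfloor$, $c_j=1$ for $j>v/2$ match the cited formulas under $r=v-1$, $f=j-1$); (ii) you reduce from $\Qdiv$ to $\Qdivv$ via the snake lemma on the degree exact sequences, giving an actual isomorphism of cokernels, while the paper uses the weaker observation $\beta(\cS_2(N)^0)=\beta(\cS_2(N))\cap\cS_2(M)^0$ to realize $\Qdiv M/\beta(\Qdiv N)$ as a subgroup of $\cS_2(M)/\beta(\cS_2(N))$; and (iii) you compute the cokernel of $\gamma_v$ explicitly as $\bigoplus_{j'\ge1}\Z/c_{j'+1}\Z$ and hence give the exact value of $k$, whereas the paper only observes that $\ell\cdot\cS_2(M)$ lies in the image, so the cokernel is an $\F_\ell$-vector space. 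Two small imprecisions in your write-up that do not affect the conclusion: your formula $c_1=\ell-1$ fails when $v=1$ (then $c_1=1$), and in the line ``$\deg(P_{\ell^i})=\varphi(\ell^{\min(i,v-i)})$ on the respective curve'' the exponent $v$ should be replaced by $v-1$ when working on $X_0(\ell^{v-1})$; you clearly intend this, but it is worth stating. Incidentally, the paper's proof has a typo at the last step (``$\ell^4\nmid N$, i.e., $r\le 3$'' should read $r\le 2$, since $N=L\ell^{r+1}$); your formulation in terms of $v=\val_\ell(N)\le 3$ is the correct one.
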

\begin{proof}
During the proof, we identify $\Qdivv N$ and $\Qdivv M$ with $\cS_2(N)$ and $\cS_2(M)$ (by the maps $\Phi_N$ and $\Phi_M$), respectively. And we regard $\beta$ as a map from $\cS_2(N)$ to $\cS_2(M)$. 
Let $\cS_2(N)^0$ (resp. $\cS_2(M)^0$) denote the image of $\Qdiv N$ (resp. $\Qdiv M$) by $\Phi_N$ (resp. $\Phi_M$). Since $\beta(\cS_2(N)^0)=\beta(\cS_2(N)) \cap \cS_2(M)^0$,
it suffices to show that
\begin{equation*}
\cS_2(M) /{\beta(\cS_2(N))} \simeq (\zmod \ell)^k
\end{equation*}
for some integer $k\geq 0$, and $k=0$ if $\ell^4$ does not divide $N$. 

Let $M=L\ell^r$ with $\gcd(L, \ell)=1$, and let $d$ be a divisor of $L$. If $\beta(\cS_2(N)) \subset \ell \cdot \cS_2(M)$, then 
\begin{equation*}
\cS_2(M)/{\ell \cdot \cS_2(M)} \simeq (\zmod \ell)^{\sigma_0(M)} \surj \cS_2(M)/{\beta(\cS_2(N))}.
\end{equation*}
Since ${\bf e}(M)_{d\ell^f}$ are the generators of $\cS_2(M)$ (for all divisors $d$ of $L$), to prove the first assertion it suffices to show that either ${\bf e}(M)_{d\ell^f}$ or $\ell \cdot {\bf e}(M)_{d\ell^f}$ is in the image of $\beta$ for every $0\le f \le r$. 

If $r=0$, i.e., $M=L$, then we easily have $\beta({\bf e}(N)_{d\ell})=\beta({\bf e}(N)_{d})={\bf e}(M)_d$. 
Suppose that $r\ge 1$. Then by \cite[Lem. 2.21]{Yoo9}, we have
\begin{equation*}
\beta({\bf e}(N)_{d\ell^{f+1}})=\begin{cases}
\ell \cdot {\bf e}(M)_{d\ell^f} & \text{ if }~~0< f <r/2,\\
{\bf e}(M)_{d\ell^f} & \text{ if } ~~ r/2 \le f \le r.
\end{cases}
\end{equation*}
Moreover, we have
\begin{equation}\label{eqn: 0101}
\beta({\bf e}(N)_{d\ell})=(\ell-1) \cdot {\bf e}(M)_d \qa \beta({\bf e}(N)_{d})={\bf e}(M)_d.
\end{equation}
This completes the proof of the first assertion.

Now, suppose that $\ell^4$ does not divide $N$, i.e., $r \le 3$. Then by the computation above it is straightforward that $\b(S_2(N))=S_2(M)$. This proves the last assertion.
\end{proof}

\begin{remark}\label{rmk: another Hecke}
By definition, we have $T_\ell=\beta_\ell(N)_* \circ \alpha_\ell(N)^*$. If $N$ is divisible by $\ell$, i.e., $N=M\ell$ for some integer $M$ as above, then we have
\begin{equation*}
T_\ell = \begin{cases}
\alpha_\ell(M)^* \circ \beta_{\ell}(M)_*-w_\ell & \text{ if } ~~ \gcd(M, \ell)=1, \\
\alpha_\ell(M)^* \circ \beta_{\ell}(M)_* & \text{ otherwise}
\end{cases}
\end{equation*}
(cf. \cite[(2.7)]{Yoo6} or \cite[Rmk. 2.16]{Yoo9}). Thus by (\ref{eqn: 0101}), we easily have $T_{\ell_j}(C_0^{\bde})=0$ for any $t<j\leq u$.
\end{remark}

\begin{remark}\label{rem: new Hecke RCD}
If we define $T_\ell$ as $\alpha_\ell(N)_* \circ \beta_\ell(N)^*$, then we can replace our definition of $C_0^\bde$ appropriately. Also, we can easily prove an analogous result of Lemma \ref{lemma: image of beta} for $\alpha_\ell(M)_*$. 
\end{remark}

\ms
\section{The index of an Eisenstein ideal}\label{section: index}
As in Section \ref{section: notation}, let $N=N_1N_2$ with $\gcd(N_1, N_2)=1$. Also, let
\begin{equation*}
N_1=\prod_{i=1}^t \ell_i \qa N_2=\prod_{j=t+1}^u \ell_i^{r_j}
\end{equation*}
with integers $r_j\ge 2$ for all $t< j \le u$. Let
\begin{equation*}
\fI^N:=(T_\ell-\ell-1 : \ell \nmid N) \subset \T_p=\T \otimes_\Z \Z_{(p)}
\end{equation*}
be an Eisenstein ideal. As in Section \ref{section: strategy}, for any $\bde=(\ve_1, \dots, \ve_t) \in \{\pm 1\}^t$ one may consider an Eisenstein ideal
\begin{equation*}
\fI^\bde=(\fI^N, w_{\ell_i}-\ve_i : 1\leq i \leq t) \subset \T_p
\end{equation*}
and wish to prove that  
\begin{equation*}
\T_p/{\fI^\bde} \simeq \Z_{(p)}/{n \Z_{(p)}}
\end{equation*}
for some integer $n\geq 1$. However, if $N$ is not squarefree, i.e., $u>t$, then the operators $T_{\ell_{r_j}}$ for all $t<j\le u$ are not necessarily congruent to integers modulo $\fI^\bde$. So the argument in Section \ref{section: strategy} does not hold. Nonetheless, if we construct a new Eisenstein ideal $\fI$ containing $\fI^\bde$ so that every operator is congruent to some integer modulo $\fI$, then we can have  $\T_p/{\fI} \simeq \Z_{(p)}/{n \Z_{(p)}}$. 
Thus, it seems natural to consider the following Eisenstein ideal of $\T_p$ as $T_{\ell_j}$ acts as zero on the new subvariety of $J_0(N)$ for all $t<j\le u$: For any $\bde=(\varepsilon_1, \dots, \varepsilon_t) \in \{\pm 1\}^t$, let
\begin{equation*}
\fI_0^\bde:=(\fI^\bde, T_\ell : \ell \mid N_2)=(\fI^\bde, T_{\ell_j} : t+1 \le j \le u)  \qa \fm_0^\bde:=(p, \fI_0^\bde).
\end{equation*}
In this section, we prove the following, which can be regarded as a generalization of Theorem \ref{thm: Ohta index}.
\begin{theorem}\label{thm: index}
Suppose that $N$ is not squarefree, i.e., $N_2>1$. If $p^2$ does not divide $N$, then we have
\begin{equation*}
 \T_p/{\fI_0^{\bde}} \simeq \Z_{(p)}/{\fn_0^{\bde}\Z_{(p)}},
\end{equation*}
where 
\begin{equation*}
\fn_0^{\bde}:=\tn{numerator}\left(\frac{1}{24} \prod_{i=1}^t (\ell_i+\varepsilon_i) \times \prod_{j=t+1}^u \ell_j^{r_j-2}(\ell_j^2-1) \right).
\end{equation*}
\end{theorem}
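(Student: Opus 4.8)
To prove Theorem~\ref{thm: index} the plan is to adapt Ohta's computation of the index (Theorem~\ref{thm: Ohta index}) so that it also handles the ramified primes $\ell_j^{r_j}$, $r_j\ge 2$. The organizing point is that $\fI_0^\bde$ forces $T_{\ell_j}\equiv 0$ for $t<j\le u$ \emph{in addition to} $T_\ell\equiv\ell+1$ for $\ell\nmid N$ and $w_{\ell_i}\equiv\varepsilon_i$ for $1\le i\le t$, so every generator of $\T$ is congruent to a rational integer modulo $\fI_0^\bde$. Hence the structure map $\Z_{(p)}\to\T_p/\fI_0^\bde$ is surjective; since $\T_p\otimes\Q$ is a product of Hecke fields of cusp forms, none admitting a character sending $T_\ell$ to $\ell+1$ for all $\ell\nmid N$ (by the Ramanujan--Petersson bound $|a_\ell|\le 2\sqrt\ell$), the ideal $\fI_0^\bde$ has finite index, so $\T_p/\fI_0^\bde\simeq\Z_{(p)}/n\Z_{(p)}$ for a unique $n\ge 1$. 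Everything then reduces to proving $\val_p(\fn_0^\bde)\le\val_p(n)\le\val_p(\fn_0^\bde)$.

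The lower bound $\val_p(n)\ge\val_p(\fn_0^\bde)$ is immediate from \S\ref{section: RCD}: by Lemma~\ref{lemma: C annihilated by I} the class $[C_0^\bde]\in J_0(N)(\Q)_\tor$ is annihilated by $\fI_0^\bde$, and by Lemma~\ref{lemma: order} its order is $\fn_0^\bde$ or $2\fn_0^\bde$, hence has $p$-part exactly $p^{\val_p(\fn_0^\bde)}$ because $p$ is odd. Since $\fI_0^\bde$ kills $[C_0^\bde]$, the $\T_p$-submodule it generates is the cyclic module $\Z_{(p)}\cdot[C_0^\bde]$, itself a quotient of $\T_p/\fI_0^\bde$; comparing cardinalities gives the claim.

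For the reverse inequality I would proceed as in \cite{Oh14}, inserting the new local factors. Let $E_0^\bde$ be the weight-$2$ Eisenstein form on $\G_0(N)$ assembled multiplicatively over the primes dividing $N$: at $\ell_i$ exactly dividing $N$ ($1\le i\le t$) one uses the Eisenstein eigenform with $w_{\ell_i}=\varepsilon_i$ (new at $\ell_i$ if $\varepsilon_i=-1$, drawn from the $\ell_i$-old subspace if $\varepsilon_i=+1$), and at $\ell_j$ with $\ell_j^{r_j}\dmid N$ ($t<j\le u$) one uses the unique Eisenstein form on $\G_0(\ell_j^{r_j})$ killed by $U_{\ell_j}$, a scalar multiple of $E_2(\tau)-(\ell_j+1)E_2(\ell_j\tau)+\ell_j E_2(\ell_j^2\tau)$. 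Concretely $E_0^\bde=\sum_{d\mid N}c(d)\,E_2(d\tau)$ with $c(d)\in\Q$ subject to $\sum_{d\mid N}c(d)/d=0$ (holomorphy) and normalized so that $a_1(E_0^\bde)=1$; it is a Hecke eigenform with the listed eigenvalues. Using the transformation law of $E_2$ and the parametrization of the cusps $\vect x d$ of $X_0(N)$ from \cite{Yoo9}, I would compute the residue of $E_0^\bde$ at every cusp — the genuinely new input being the contribution $\ell_j^{r_j-2}(\ell_j^2-1)$ from the cusps of level $\ell_j^f$, $0\le f\le r_j$, for $t<j\le u$ — and check that the content of the resulting residue vector equals $\fn_0^\bde$ up to powers of $2$, $3$, and the ramified primes $\ell_j$, a discrepancy that is invisible at $p$ since $p^2\nmid N$. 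Feeding this into the argument behind Theorem~\ref{thm: Ohta index}, namely the faithful action of the Eisenstein component of the Hecke algebra on an integral lattice of weight-$2$ modular forms paired against $q$-expansions, then yields $\#(\T_p/\fI_0^\bde)\le p^{\val_p(\fn_0^\bde)}$.

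The main obstacle is this last step, in three nested layers. First, one must verify that $E_0^\bde$ really exists as a holomorphic eigenform with the prescribed eigenvalues and does not degenerate: when $\varepsilon_i=+1$ there is no $\ell_i$-new Eisenstein form with $w_{\ell_i}=+1$, so the $\ell_i$-local factor has to come from the old subspace while still producing the factor $\ell_i+\varepsilon_i=\ell_i+1$. Second, the residue computation at the ramified cusps $\vect x{\ell_j^f}$ has no counterpart in Ohta's squarefree setting; one must pin down the exact local contribution and verify that no stray factor (prime to $p$ or equal to $p$) survives after forming the content. Third, adapting the integral-structure and multiplicity-one input of \cite{Oh14} is where $p^2\nmid N$ is genuinely used: if $p\mid N$ it forces $p\dmid N$, so $X_0(N)$ has multiplicative reduction at $p$ and the integral theory of modular forms behaves well there, while $\ell_i+\varepsilon_i=p\pm1$ together with $p\nmid N_2$ then make $\fn_0^\bde$ a $p$-unit, so in that case the assertion simply reads $\T_p/\fI_0^\bde=0$.
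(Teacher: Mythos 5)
Your proposal follows essentially the same architecture as the paper's proof: reduce to a cyclicity claim for $\T_p/\fI_0^\bde$, get the lower bound on the index from the class $[C_0^\bde]$ and Lemma~\ref{lemma: order}, and get the upper bound from the residues of an Eisenstein series $E_0^\bde$ fed through a duality between Hecke algebra and modular forms. The paper builds $E_0^\bde$ by composing level-raising maps (Remark~\ref{rem: another}) rather than by the $E_2(d\tau)$ ansatz you write down, but those are equivalent. Your lower-bound argument is correct.

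The gap is in your final sentence, where you dismiss the case $p\mid N$ (hence $p\dmid N$, since $p^2\nmid N$) by asserting that $\fn_0^\bde$ is then a $p$-unit, so that the statement degenerates to $\T_p/\fI_0^\bde=0$. This is false. If $p=\ell_1$, the factor $\ell_1+\varepsilon_1=p\pm 1$ is indeed prime to $p$, but the remaining factors $\ell_i+\varepsilon_i$ ($i\geq 2$) and $\ell_j^2-1$ ($t<j\leq u$) can certainly be divisible by $p$ --- e.g.\ $\ell_2\equiv -\varepsilon_2\pmod p$ --- so $\fn_0^\bde$ is typically not a $p$-unit when $p\dmid N$. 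This case is genuinely the hardest, not the trivial, one. The paper handles it by working in $M_2^{\tn{reg}}(\Gamma_0(N);R)$ (regular differentials on the Deligne--Rapoport model at $p$) and invoking Ohta's Proposition (1.4.9) and the key $q$-expansion principle (Proposition~\ref{prop: g=0 when} here, which needs the eigenvalue condition $w_p(\ov f)=\varepsilon\ov f$ precisely because $p\dmid N$). Relatedly, you gesture at ``the faithful action of the Eisenstein component paired against $q$-expansions'' but do not isolate the actual mechanism, which is: a form $g$ killed by $w_{\ell_i}-\varepsilon_i$ with $a(0;g)=0$ and $a(n;g)=0$ for $\gcd(n,N_1)=1$ is zero. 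Without that statement, the comparison of $E_0^\bde$ with the cusp form $f$ produced by the pairing, and hence the upper bound $m\leq\val_p(\fn_0^\bde)$, does not go through. So the outline is on the right track, but you have mislocated where the hypothesis $p^2\nmid N$ earns its keep, and the case you declare trivial is the one that consumes most of the technical work.
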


\ms
\subsection{Generalities}
Throughout this subsection, we always assume that $N$ is not divisible by $p^2$.
We recall some results about modular forms on $X_0(N)$ over $R$, where $R=\Z_{(p)}$ or $R=\zmod {p^m}$ for some $m\geq 1$. 
Our reference is Ohta's paper \cite{Oh14}, and all the Sections, Lemmas, Equations and Propositions below are from there.

Suppose first that $p$ does not divide $N$. As in Section 1.2, let $M_2^A(\Gamma_0(N); R)$ (resp. $S_2^A(\Gamma_0(N); R)$) be the spaces of modular forms (resp. cusp forms) of weight $2$ over $R$ for $\Gamma_0(N)$ in the sense of Deligne and Rapoport \cite{DR73} and Katz \cite{Ka73}. Also, as in Section 1.3, let $M_2(\Gamma_0(N))$ (resp. $S_2(\Gamma_0(N))$) be the complex vector space of modular forms (resp. cusp forms) of weight $2$ for $\Gamma_0(N)$ in the usual sense. For any $f \in M_2(\Gamma_0(N))$, let $f(q)$ denote its $q$-expansion at the cusp $\infty$, and for any $f \in M_2^A(\Gamma_0(N); R)$, let $f(q)$ denote the image of $f$ in $R[[q]]$ as in (1.2.9).
Also, let $a(n; f)$ be the $n$th coefficient of $f(q)$, i.e.,
\begin{equation*}
f(q)=\sum_{n\geq 0} a(n; f) \cdot  q^n.
\end{equation*}
Let 
\begin{equation*}
\begin{split}
M_2^B(\Gamma_0(N); \Z)&:=\{f \in M_2(\Gamma_0(N)) : f(q) \in \Z[[q]] \},\\
S_2^B(\Gamma_0(N); \Z)&:=\{f \in S_2(\Gamma_0(N)) :  f(q) \in \Z[[q]] \}\\
\end{split}
\end{equation*}
and
\begin{equation*}
\begin{split}
M_2^B(\Gamma_0(N); R) &:=M_2^B(\Gamma_0(N); \Z) \otimes_\Z R,\\
S_2^B(\Gamma_0(N); R) &:=S_2^B(\Gamma_0(N); \Z) \otimes_\Z R.
\end{split}
\end{equation*}
By (1.3.4), we have 
\begin{equation*}
M_2^A(\Gamma_0(N); \Z_{(p)})=M_2^B(\Gamma_0(N); \Z_{(p)}).
\end{equation*}
Also, by Lemma (1.3.5), we have 
\begin{equation*}
M_2^B(\Gamma_0(N); \zmod {p^m}) \inj M_2^A(\Gamma_0(N); \zmod {p^m})
\end{equation*}
preserving $q$-expansions as $p$ does not divide $N$.

Next, suppose that $p$ divides $N$. By our assumption, we have $p \dmid N$.
As in Section 1.4, let
\begin{equation*}
\begin{split}
M_2^{\tn{reg}}(\Gamma_0(N); R)&:=H^0(X_0(N)_{/R}, \Omega(\textit{cusps})),\\
S_2^{\tn{reg}}(\Gamma_0(N); R)&:=H^0(X_0(N)_{/R}, \Omega),\\
\end{split}
\end{equation*}
where $\Omega$ is the sheaf of regular differentials on $X_0(N)_{/R}$ and $\textit{cusps}$ is the scheme of cusps of $X_0(N)_{/R}$. 
By Proposition (1.4.8), there there exist $q$-expansion preserving maps:
\begin{equation*}
\begin{split}
M_2^{\tn{reg}}(\Gamma_0(N); R) \to M_2^B(\Gamma_0(N); R),\\
S_2^{\tn{reg}}(\Gamma_0(N); R) \to S_2^B(\Gamma_0(N); R),\\
\end{split}
\end{equation*}
which are injections when $R=\Z_{(p)}$. 
If $p$ does not divide $N$, then $R$ is a $\Z[1/N]$-algebra, and so the above maps are in fact isomorphisms (cf. Corollary (1.4.10)). We henceforth follow Convention (1.4.16), and write $f(q)$ for the $q$-expansion of $f \in M_2^{\reg}(\Gamma_0(N); R)$. 

By the discussions in Section 1.5, there are actions of the Hecke operators $T_\ell$ (for any primes $\ell \neq p$) and the Atkin--Lehner operators $w_\ell$ (for any primes $\ell \mid N_1$) on $M_2^{\tn{reg}}(\Gamma_0(N); R)$ (resp. $S_2^{\tn{reg}}(\Gamma_0(N); R)$). As usual, we identify $\T_p$ with a subring of $\End(S_2^{\tn{reg}}(\Gamma_0(N); \Z_{(p)}))$ generated by the same-named operators (cf. \cite[Sec. 1.1]{MR91} or \cite[pg. 275]{Oh14}). Let
\begin{equation*}
\begin{split}
M_2^{\tn{reg}}(\Gamma_0(N); R)^{\bde}&:=M_2^{\tn{reg}}(\Gamma_0(N); R)[w_{\ell_i}-\varepsilon_i : 1\leq i \leq t],\\
S_2^{\tn{reg}}(\Gamma_0(N); R)^{\bde}&:=S_2^{\tn{reg}}(\Gamma_0(N); R)[w_{\ell_i}-\varepsilon_i : 1\leq i \leq t].
\end{split}
\end{equation*}

The following is crucial in our proof.
\begin{proposition}\label{prop: g=0 when}
Let $R=\zmod {p^m}$ for some $m\geq 1$. Let 
\begin{equation*}
g(q)=\sum_{n\geq 0} a(n; g) \cdot q^n \in M_2^{\tn{reg}}(\Gamma_0(N); R)^{\bde}
\end{equation*}
such that $a(n; g)=0$ unless $\gcd(n, N_1)>1$. Suppose further that $a(0; g)=0$. Then $g=0$.
\end{proposition}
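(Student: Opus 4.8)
The plan is to exploit the action of the Atkin--Lehner operators $w_{\ell_i}$ together with the hypothesis that $g$ is supported (away from the constant term) on coefficients $a(n;g)$ with $\gcd(n,N_1)>1$, in order to ``descend'' $g$ to a form on a curve of smaller level, where a nonzero such form cannot exist. First I would set up the degeneracy maps: for each $1\le i\le t$, write $N=M_i\ell_i$ with $\gcd(M_i,\ell_i)=1$, and recall that the two degeneracy maps $\alpha_{\ell_i}(M_i),\beta_{\ell_i}(M_i)\colon X_0(N)\to X_0(M_i)$ induce, by pullback, inclusions of spaces of regular differentials $M_2^{\reg}(\Gamma_0(M_i);R)\hookrightarrow M_2^{\reg}(\Gamma_0(N);R)$, and that the image is precisely the ``$\ell_i$-old'' subspace. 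The effect on $q$-expansions is the familiar one: $\alpha_{\ell_i}^*$ preserves $q$-expansions at $\infty$, while $\beta_{\ell_i}^*$ sends $f(q)$ to $f(q^{\ell_i})$. The key elementary observation is that if $g\in M_2^{\reg}(\Gamma_0(N);R)^{\bde}$ then $w_{\ell_i}g=\varepsilon_i g$, and since (by the relations \eqref{eqn: AL} between $w_{\ell_i}$, $\alpha_{\ell_i}^*$ and $\beta_{\ell_i}^*$) a form that is an $w_{\ell_i}$-eigenform with eigenvalue $\pm 1$ and lies in the $\ell_i$-old subspace is forced to be a combination of $\alpha_{\ell_i}^*h$ and $\beta_{\ell_i}^*h$ for a single $h$ on $X_0(M_i)$, I can try to recognize $g$ as old at each of the primes $\ell_1,\dots,\ell_t$ simultaneously.

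The heart of the argument is the coefficient condition. I would argue that the hypothesis ``$a(n;g)=0$ unless $\gcd(n,N_1)>1$'' says exactly that $g$ has no $q$-expansion support on integers prime to $N_1$, i.e.\ $g$ lies in $\sum_{i=1}^t \ell_i\text{-old part}$ in a strong sense at the level of $q$-expansions. More precisely, I would show by an inclusion–exclusion / sieving argument on the index of summation that $g(q)$ can be written as a sum $\sum_{i=1}^t \beta_{\ell_i}^*(g_i)(q)$ of forms each of which is $\ell_i$-old, where $g_i\in M_2^{\reg}(\Gamma_0(M_i);R)$; here the vanishing of $a(0;g)$ is used to handle the constant terms of the $g_i$ coherently (the constant terms must telescope to zero). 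Combining this with the $w_{\ell_i}$-eigenvalue condition $w_{\ell_i}g=\varepsilon_i g$ pins down each $g_i$ and, crucially, forces a relation that when fed back into the ``prime to $N_1$'' coefficients produces a contradiction unless all $g_i=0$. Concretely, I expect the mechanism to be: being $\ell_i$-old and a $w_{\ell_i}$-eigenform with eigenvalue $\varepsilon_i$ means $g$ is a specific combination $\alpha_{\ell_i}^*h_i + \varepsilon_i \beta_{\ell_i}^*h_i$ up to normalization; iterating over all $i$ and comparing $q$-expansions shows $g$ is determined by its restriction to the ``$N_1$-new at no prime'' piece, which the support hypothesis kills.

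The main obstacle, I expect, is the bookkeeping of the several degeneracy maps at once: making the simultaneous ``$\ell_i$-old for every $i$'' decomposition precise over the ring $R=\zmod{p^m}$ (which is not a field and not torsion-free), where one cannot freely invoke semisimplicity or dimension counts, and where the maps $M_2^{\reg}(\Gamma_0(M_i);R)\to M_2^{\reg}(\Gamma_0(N);R)$ need not be as well-behaved as over $\Z_{(p)}$ or $\mathbf{C}$. The clean way around this is probably to run the whole argument purely on the level of $q$-expansions (which the paper has set up to be faithful enough via Proposition (1.4.8) and the $q$-expansion principle), reducing everything to a statement about formal power series in $R[[q]]$ together with the linear operators induced by $T_\ell$ and $w_{\ell_i}$, and then invoking the integral/flat structure only to transfer the conclusion. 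A secondary subtlety is the prime $p$ possibly dividing $N$ (so $p\dmid N$ exactly): then $T_p$ is not among our operators, but since $p$ exactly divides $N$ it is one of the $\ell_i$ and is handled by $w_p$ instead, so the argument should go through uniformly; I would just need to keep track that $\gcd(n,N_1)>1$ includes $p\mid n$ in that case.
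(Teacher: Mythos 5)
Your approach is trying to reprove from scratch what the paper simply delegates to Ohta. The paper's proof is a reduction: when $p\nmid N$, it views $g$ in $M_2^A(\Gamma_0(N);R)$ and cites Ohta's Proposition~(2.1.2), which states precisely that a $w_{\ell_i}$-eigenform over $R=\Z/p^m\Z$ whose coefficients vanish off $\gcd(n,N_1)>1$ must be constant, so $a(0;g)=0$ kills it; when $p\dmid N$, it lifts $g$ to $\Z_{(p)}$, divides out the common $p$-power from the coefficients, checks the quotient is still regular via Ohta's $w_p$-criterion Proposition~(1.4.9), and reduces mod $p$ to land in the situation covered by Ohta's Proposition~(2.2.7) over $\F_p$. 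So rather than a new argument, the content is knowing exactly which of Ohta's lemmas apply and how to bridge to them in the $p\mid N$ case.

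There are two concrete gaps in your sketch. First, the ``inclusion--exclusion/sieving'' step does not work as stated: knowing that $a(n;g)=0$ unless some $\ell_i\mid n$ lets you split the \emph{power series} $g(q)$ into pieces supported on multiples of each $\ell_i$, but those pieces are not $q$-expansions of modular forms, and being of the form $h(q^{\ell_i})$ as a formal power series is far from implying that $h$ is a modular form of level $N/\ell_i$. What actually makes Ohta's argument work is a delicate interplay between $U_{\ell_i}$, $V_{\ell_i}$, the $w_{\ell_i}$-eigenvalue, and the $q$-expansion principle on the relevant (possibly non-smooth, when $p\mid N$) modular curve; this is genuinely geometric, not a formal power-series manipulation, and your plan of ``running everything on the level of $q$-expansions'' underestimates it. Second, your remark that the $p\dmid N$ case ``should go through uniformly'' is not correct: over $\Z/p^m\Z$ with $p\mid N$ one does not have a clean identification of $M_2^{\reg}$ with $M_2^A$ or $M_2^B$ preserving $q$-expansions, which is why the paper performs the lift-to-$\Z_{(p)}$, strip-off-$p^a$, check-regularity-via-$w_p$, reduce-mod-$p$ detour before it can apply the mod-$p$ statement (Proposition~(2.2.7) of Ohta). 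That detour is an essential ingredient, not a secondary subtlety.
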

\begin{proof}
Suppose first that $p$ does not divide $N$. By the discussion above, we have
\begin{equation*}
M_2^{\tn{reg}}(\Gamma_0(N); R) =M_2^B(\Gamma_0(N); R) \inj M_2^A(\Gamma_0(N); R)
\end{equation*}
preserving $q$-expansions. Thus, $g$ can be regarded as an element of $M_2^A(\Gamma_0(N); R)$ such that $w_{\ell_i}(g)=\varepsilon_i \cdot g$ for all $1\leq i\leq t$ and $a(n; g)=0$ unless $\gcd(n, N_1)>1$. So $g$ is a constant by Proposition (2.1.2). Since $a(0; g)=0$, we have $g=0$, as claimed.

Suppose next that $p$ divides $N$. By our assumption, we have $p \dmid N$. Suppose that $g\neq 0$, i.e., there is an integer $k$ such that $a(k; g) \not\equiv 0 \pmod {p^m}$.
Let $\pi : \Z_{(p)} \to R$ be the natural map induced by the reduction modulo $p^m$. Since the map 
\begin{equation*}
M_2^{\tn{reg}}(\Gamma_0(N); \Z_{(p)}) \to M_2^{\tn{reg}}(\Gamma_0(N); R) 
\end{equation*}
induced by $\pi$ is surjective, there is a lift 
\begin{equation*}
\textstyle\wt{g}(q)=\sum_{n\geq 0} a(n; \wt{g}) \cdot q^n \in M_2^{\tn{reg}}(\Gamma_0(N); \Z_{(p)})
\end{equation*}
such that $\pi(a(n; \wt{g})) =a(n; g)$ for all $n\geq 0$.  Since $a(k; g) \not\equiv 0 \pmod {p^m}$, the greatest common divisor of the coefficients $a(n; \wt{g})$ is $p^a$ for some $0\leq a<m$. By definition, we have $p^{-a} \cdot a(n; g) \in \Z_{(p)}$ for all $n\geq 0$, and so 
\begin{equation*}
f:=p^{-a} \cdot \wt{g} \in M_2^B(\Gamma_0(N); \Z_{(p)}).
\end{equation*}
Let $w_p(\wt{g})(q)=\sum_{n\geq 0} a_n \cdot q^n$. 
Since $w_p(g)=\varepsilon g$ for some $\varepsilon \in \{ \pm 1\}$, we have $a_n \equiv \varepsilon  a(n; g) \pmod {p^m}$, and so
$a_n$ is divisible by $p^a$ for all $n\geq 0$. Thus, $f \in M_2^{\tn{reg}}(\Gamma_0(N); \Z_{(p)})$ by Proposition (1.4.9).
Let $\ov{f}$ be the image of $f$ in $M_2^{\tn{reg}}(\Gamma_0(N); \F_p)$, which is non-zero by its construction.
Since $a<m$, we have $w_{\ell_i}(\ov{f})=\varepsilon_i \cdot \ov{f}$ for all $1\leq i \leq t$ and
$a(n; f)=0$ unless $\gcd(n, N_1)>1$. Thus, by the same argument as in Proposition (2.2.7), we have $\ov{f}=0$, which is a contradiction. (Indeed, Ohta assumed that $N$ is squarefree in the proposition, but it is not used in the proof. What is used there is that $\ell_i \dmid N$ for all $1\leq i \leq s$, which are guaranteed here.)
Therefore we have $g=0$, as claimed.
\end{proof}

Since we have Proposition \ref{prop: g=0 when}, we can mimic the arguments in Theorem (2.4.6) and Corollary (2.4.7). As a result, we have the following.
\begin{proposition}\label{prop: duality}
There is a perfect pairing:
\begin{equation*}
S_2^{\tn{reg}}(\Gamma_0(N); \Z_{(p)})^{\bde} \times \T_p^{\bde} \overset{( ~~,~~ )}{\longrightarrow} \Z_{(p)},
\end{equation*}
where $\T_p^{\bde}:=\T_p/{\br{w_{\ell_i}-\varepsilon_i : 1\leq i \leq t}}$.
\end{proposition}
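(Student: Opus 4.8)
The pairing will be the classical ``first Fourier coefficient'' pairing $(f, T) := a(1; Tf)$, and the plan is to transport Ohta's proof of Theorem (2.4.6) and Corollary (2.4.7) essentially verbatim, using Proposition \ref{prop: g=0 when} in place of the squarefree-level input (Ohta's Proposition (2.2.7)) at the single point where a vanishing statement is invoked. I would begin with the formal reductions. Since $p$ is odd and the $w_{\ell_i}$ are commuting involutions lying in $\T_p$, the idempotents $e_{\bde} := \prod_{i=1}^{t} \tfrac{1+\varepsilon_i w_{\ell_i}}{2}$ decompose both sides compatibly,
\begin{equation*}
S_2^{\tn{reg}}(\Gamma_0(N); \Z_{(p)}) = \bigoplus_{\bde \in \{\pm 1\}^t} S_2^{\tn{reg}}(\Gamma_0(N); \Z_{(p)})^{\bde}, \qquad \T_p = \prod_{\bde \in \{\pm 1\}^t} \T_p^{\bde},
\end{equation*}
and, $\T_p$ being commutative, each $w_{\ell_i}$ — hence each $e_{\bde}$ — is self-adjoint for $(\,,\,)$, so the value $(f, T)$ depends only on $e_{\bde}T$ once $f \in S_2^{\tn{reg}}(\Gamma_0(N); \Z_{(p)})^{\bde}$; thus $(\,,\,)$ descends to a well-defined pairing on the $\bde$-parts. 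As both $S_2^{\tn{reg}}(\Gamma_0(N); \Z_{(p)})^{\bde}$ and $\T_p^{\bde}$ are finitely generated free $\Z_{(p)}$-modules ($\T_p^{\bde}$ is a direct factor of the torsion-free algebra $\T_p$), it suffices to show that the adjoint map
\begin{equation*}
\phi^{\bde} : S_2^{\tn{reg}}(\Gamma_0(N); \Z_{(p)})^{\bde} \longrightarrow \Hom_{\Z_{(p)}}\!\big(\T_p^{\bde}, \Z_{(p)}\big), \qquad f \longmapsto \big(T \mapsto a(1; Tf)\big)
\end{equation*}
is an isomorphism; the other adjoint of $(\,,\,)$ is then automatically an isomorphism as well, by the canonical double-duality of finitely generated free $\Z_{(p)}$-modules.

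The substance of the argument is the injectivity of $\phi^{\bde}$ and of its reduction modulo $p$. The algebra $\T_p$ contains $T_\ell$ for \emph{every} prime $\ell \nmid N_1$ — in particular the operators $T_{\ell_j}$ for $\ell_j \mid N_2$, which act on $q$-expansions by $\sum_n a(n) q^n \mapsto \sum_n a(n \ell_j) q^n$ — so the standard Hecke recursions exhibit, for each integer $n$ with $\gcd(n, N_1) = 1$, an operator $T_n \in \T_p$ with $a(1; T_n f) = a(n; f)$. Hence if $f \in \Ker(\phi^{\bde})$ then $a(n; f) = 0$ for every $n$ coprime to $N_1$ and $a(0; f) = 0$ ($f$ being a cusp form), so reducing modulo $p^m$ and invoking Proposition \ref{prop: g=0 when} for all $m \ge 1$ forces $f = 0$. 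The same computation after reduction modulo $p$ — using the $q$-expansion-preserving inclusion $S_2^{\tn{reg}}(\Gamma_0(N); \Z_{(p)})^{\bde} \otimes \F_p \inj S_2^{\tn{reg}}(\Gamma_0(N); \F_p)^{\bde}$ from Ohta's Section 1.4 and Proposition \ref{prop: g=0 when} with $m = 1$ — shows that $\phi^{\bde} \otimes \F_p$ is injective. To promote injectivity to bijectivity I would compare ranks: the identical eigenspace computation, run now over $\bbC$ with the characteristic-zero form of the vanishing statement (Ohta's Proposition (2.1.2)) together with the faithfulness of $\T \otimes \bbC$ on $S_2(\Gamma_0(N); \bbC)$, makes the pairing $S_2(\Gamma_0(N); \bbC)^{\bde} \times (\T \otimes \bbC)^{\bde} \to \bbC$ perfect, whence $S_2^{\tn{reg}}(\Gamma_0(N); \Z_{(p)})^{\bde}$ and $\T_p^{\bde}$ have the same $\Z_{(p)}$-rank. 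An injection between $\F_p$-vector spaces of equal dimension is an isomorphism, so $\phi^{\bde} \otimes \F_p$ is an isomorphism, and Nakayama's lemma upgrades $\phi^{\bde}$ to an isomorphism.

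The only genuinely non-formal ingredient here is Proposition \ref{prop: g=0 when}, which has already been established; everything else is bookkeeping, whose one delicate point is to observe that the hypothesis of that proposition — namely $a(n; g) = 0$ whenever $\gcd(n, N_1) = 1$ — is exactly what the operators available in $\T_p$ extract from a form, the primes dividing $N_2$ costing nothing since $T_{\ell_j} \in \T_p$ recovers $a(\ell_j^k m; g)$ for every $m$ coprime to $N_1$. The step that would demand the most care in a complete write-up is the package of characteristic-$p$ comparison and base-change facts quoted above (the $q$-expansion-preserving inclusion into $S_2^{\tn{reg}}(\Gamma_0(N); \F_p)^{\bde}$, and surjectivity of reduction modulo $p^m$ on $M_2^{\tn{reg}}$), but these are precisely the results of Ohta's Section 1.4 that already underlie the proof of Proposition \ref{prop: g=0 when}, and they remain valid under the present hypothesis $p^2 \nmid N$, so no new obstruction appears.
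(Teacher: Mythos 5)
Your proposal is correct and follows exactly the route the paper itself indicates: the paper's ``proof'' of Proposition~\ref{prop: duality} is the one sentence ``Since we have Proposition~\ref{prop: g=0 when}, we can mimic the arguments in Theorem (2.4.6) and Corollary (2.4.7),'' and your write-up simply unpacks what that mimicking consists of, using $(f,T)\mapsto a(1;Tf)$, the idempotent decomposition by $e_{\bde}=\prod_i\tfrac{1+\varepsilon_i w_{\ell_i}}{2}$ (available because $p$ is odd), the observation that $T_n\in\T_p$ for every $n$ with $\gcd(n,N_1)=1$, and Proposition~\ref{prop: g=0 when} as the sole substantive vanishing input. The formal scaffolding (freeness of both sides over $\Z_{(p)}$, rank comparison over $\bbC$, Nakayama) is standard and matches Ohta's original argument at squarefree level.
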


\ms
\subsection{Eisenstein series} 
In this subsection, we construct an Eisenstein series $E_0^{\bde}$ using various ``level-raising'' maps between the spaces of modular forms (cf. \cite[Sec. 2.3]{Oh14}, \cite[Def. 2.5]{Yoo1}, \cite[Sec. 5]{Yoo6}). 
If $N$ is squarefree, then all the results in this subsection are already discussed in \cite{Oh14}, and therefore we assume that $N$ is not squarefree, i.e., $u>t$.
\begin{definition}
Let $\ell$ be a prime not dividing $M$. We define the maps $[\ell]^+$ and $[\ell]^-$ from $M_2(\Gamma_0(M))$ to $M_2(\Gamma_0(M\ell))$ by 
\begin{equation*}
[\ell]^+(f)(\tau):=f(\tau)+\ell f(\ell \tau) \qa [\ell]^-(f)(\tau):=f(\tau)-\ell f(\ell \tau).
\end{equation*}
For any $r\geq 2$, we define a map $[\ell^r]^0$ from $M_2(\Gamma_0(M))$ to $M_2(\Gamma_0(M\ell^r))$ by
\begin{equation*}
[\ell^r]^0(f)(\tau):=f(\tau)-(\ell+1)f(\ell \tau)+\ell f(\ell^2 \tau).
\end{equation*}
In fact, $[\ell^r]^0(f)$ can be regarded as a modular form for $\Gamma_0(M\ell^2)$.
\end{definition}
Note that for any $f \in M_2(\Gamma_0(M))$ and $r\geq 2$, we have
\begin{equation*}
a(1; [\ell^r]^0(f))=a(1; [\ell]^+(f))=a(1; [\ell]^-(f))=a(1; f).
\end{equation*}

\begin{remark}\label{rem: another}
In fact, the maps above are constructed by the degeneracy maps as follows:
\begin{equation*}
\begin{split}
[\ell]^\pm (f)&=\left(\alpha_\ell(M)^* \pm \beta_\ell(M)^*\right)(f), \\
[\ell^2]^0(f)&=\left( \alpha_\ell(M\ell)^* -\frac{1}{\ell} \beta_\ell(M\ell)^* \right) \circ [\ell]^-(f),\\
[\ell^r]^0(f)&=\alpha_\ell(M\ell^{r-1})^* \circ  \cdots \circ \alpha_\ell(M\ell^2)^* \circ [\ell^2]^0(f) \quad \text{ for any $r\ge 3$.}
\end{split}
\end{equation*}
\end{remark}

\begin{definition}
We define 
\begin{equation*}
E_0^{\bde}:= [\ell_1]^{\varepsilon_1} \circ \cdots \circ [\ell_t]^{\varepsilon_t} \circ [\ell_{t+1}^{r_{t+1}}]^0 \circ \cdots \circ [\ell_u^{r_u}]^0 \circ (K) \in M_2(\Gamma_0(N)),
\end{equation*}
where $K$\textemdash see \cite[Sec. 2.3]{Oh14} for its definition\textemdash is a non-holomorphic Eisenstein series of level $1$. Since $K$ is not a genuine modular form, the definition seems invalid. However, $[\ell]^-(K)$ is a genuine modular form for $\Gamma_0(\ell)$ as the non-holomorphic term of $[\ell]^-(K)$ vanishes. Thus, if either $N$ is not squarefree (in which case $[\ell^r]^0$ is the composition of other maps and $[\ell]^-$) or $\bde \ne \bde_+$, then this definition makes sense.
\end{definition}

By its construction, we easily have the following.
\begin{lemma}\label{lemma: Eisenstein series Hecke action}
For any $1\leq i \leq t < j \le u$ and any prime $\ell$ not dividing $N$, we have 
\begin{equation*}
w_{\ell_i}(E_0^{\bde})=\varepsilon_i \cdot E_0^{\bde},  \quad T_{\ell_j}(E_0^\bde)=0 \qa T_\ell(E_0^{\bde})=(\ell+1) \cdot E_0^{\bde}.
\end{equation*}
Also, we have $a(1; E_0^{\bde})=1$. 
\end{lemma}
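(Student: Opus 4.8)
The plan is to push each of the three operators through the chain of level-raising maps defining $E_0^{\bde}$, using that operators attached to distinct primes commute, and to settle the one ``same-prime'' case by a direct computation on $q$-expansions. Recall that $K$ satisfies $T_\ell K=(\ell+1)K$ for \emph{every} prime $\ell$ and has $a(1;K)=1$ (cf.\ \cite[Sec.~2.3]{Oh14}). Since each of $[\ell]^{\pm}$ and $[\ell^r]^{0}$ fixes the coefficient of $q^{1}$ --- in degree $1$ the terms $f(\ell\tau)$ and $f(\ell^{2}\tau)$ contribute nothing, as $\ell\geq 2$ --- applying all of the level-raising maps to $K$ yields $a(1;E_0^{\bde})=a(1;K)=1$, which is the last assertion.

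For the eigenvalue statements I would invoke Remark~\ref{rem: another}, which realizes each $[\ell_i]^{\varepsilon_i}$ (resp.\ each $[\ell_j^{r_j}]^{0}$) as a composition of pullbacks $\alpha_{\ell}(\cdot)^{*},\beta_{\ell}(\cdot)^{*}$ along the degeneracy maps at the single prime $\ell_i$ (resp.\ $\ell_j$), together with the standard fact that degeneracy maps, Hecke operators and Atkin--Lehner operators attached to different primes commute (cf.\ \cite[Sec.~13]{MR91}). Thus, for $\ell\nmid N$, the operator $T_\ell$ commutes with every level-raising map appearing in the definition of $E_0^{\bde}$, so $T_\ell(E_0^{\bde})$ is obtained by applying all those maps to $T_\ell(K)=(\ell+1)K$, giving $T_\ell(E_0^{\bde})=(\ell+1)E_0^{\bde}$. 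Likewise $w_{\ell_i}$ commutes with every level-raising map other than $[\ell_i]^{\varepsilon_i}$, so it suffices to evaluate $w_{\ell_i}$ on a form $[\ell_i]^{\varepsilon_i}(g)=\bigl(\alpha_{\ell_i}^{*}+\varepsilon_i\,\beta_{\ell_i}^{*}\bigr)(g)$ with $g$ of level prime to $\ell_i$; by the relations $w_{\ell_i}\circ\alpha_{\ell_i}^{*}=\beta_{\ell_i}^{*}$ and $w_{\ell_i}\circ\beta_{\ell_i}^{*}=\alpha_{\ell_i}^{*}$ of \eqref{eqn: AL} we get $w_{\ell_i}\bigl([\ell_i]^{\varepsilon_i}(g)\bigr)=\beta_{\ell_i}^{*}(g)+\varepsilon_i\,\alpha_{\ell_i}^{*}(g)=\varepsilon_i\,[\ell_i]^{\varepsilon_i}(g)$, hence $w_{\ell_i}(E_0^{\bde})=\varepsilon_i\,E_0^{\bde}$.

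For $T_{\ell_j}$ with $t<j\leq u$ we have $T_{\ell_j}=U_{\ell_j}$ (since $\ell_j^{2}\mid N$), acting on $q$-expansions by $\sum_{n}b_{n}q^{n}\mapsto\sum_{n}b_{\ell_j n}q^{n}$. As $U_{\ell_j}$ commutes with all level-raising maps at primes $\neq\ell_j$, it is enough to show $U_{\ell_j}\bigl([\ell_j^{r_j}]^{0}(h)\bigr)=0$, where $h$ is the form obtained from $K$ by applying the maps $[\ell_m^{r_m}]^{0}$ with $m>j$. These maps commute with $T_{\ell_j}$ and $T_{\ell_j}K=(\ell_j+1)K$, so $h$ --- whose level is prime to $\ell_j$, so that $T_{\ell_j}$ acts on it as the usual good-prime Hecke operator --- satisfies $T_{\ell_j}h=(\ell_j+1)h$, i.e.
\begin{equation*}
a(\ell_j m;h)+\ell_j\,a(m/\ell_j;h)=(\ell_j+1)\,a(m;h)\qquad\text{for all }m\geq 0,
\end{equation*}
with the convention $a(m/\ell_j;h)=0$ when $\ell_j\nmid m$. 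From $[\ell_j^{r_j}]^{0}(h)(\tau)=h(\tau)-(\ell_j+1)h(\ell_j\tau)+\ell_j\,h(\ell_j^{2}\tau)$, the coefficient of $q^{\ell_j m}$ in $[\ell_j^{r_j}]^{0}(h)$ equals $a(\ell_j m;h)-(\ell_j+1)\,a(m;h)+\ell_j\,a(m/\ell_j;h)$, which vanishes by the displayed identity. Since $U_{\ell_j}$ records only the coefficients in degrees $\ell_j m$, we conclude $U_{\ell_j}\bigl([\ell_j^{r_j}]^{0}(h)\bigr)=0$, and applying the remaining level-raising maps (all at primes $\neq\ell_j$) gives $T_{\ell_j}(E_0^{\bde})=0$.

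The only point demanding a little care is the bookkeeping of intermediate levels: one must verify that at the stage where $[\ell_j^{r_j}]^{0}$ is applied the prime $\ell_j$ does not yet divide the level (so that $T_{\ell_j}$ there genuinely is the good-prime Hecke operator with the standard $q$-expansion rule), and that each commutation of operators at distinct primes is invoked at the correct level. Both follow at once from the order in which the maps appear in the definition of $E_0^{\bde}$: the maps $[\ell_m^{r_m}]^{0}$ with $m\geq j$ are applied to $K$ first, then $[\ell_j^{r_j}]^{0}$, and only afterwards the remaining maps. I do not anticipate a genuine obstacle --- the lemma is a formal consequence of the definitions together with these commutation properties, which is precisely why the paper states that it holds ``by its construction''.
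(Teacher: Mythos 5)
Your proof is correct and takes essentially the same route as the paper's: the paper cites the analogous argument in \cite[Prop.~5.4]{Yoo6} together with Remark~\ref{rem: another} (the degeneracy-map realization of the level-raising operators), and you have simply written out that argument in full --- pushing $T_\ell$, $w_{\ell_i}$, and $T_{\ell_j}$ through the chain of level-raising maps using commutation of operators at distinct primes, settling the same-prime Atkin--Lehner case with the relations of \eqref{eqn: AL}, and verifying $T_{\ell_j}[\ell_j^{r_j}]^0(h)=0$ by the direct $q$-expansion identity, with $a(1;E_0^{\bde})=1$ read off from the level-raising definitions exactly as the paper says.

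One small caveat worth flagging: your $q$-expansion computation for $T_{\ell_j}$ tacitly identifies the paper's $T_{\ell_j}$ (defined as $\beta_{\ell_j}(N)_*\circ\alpha_{\ell_j}(N)^*$ on $J_0(N)$, equivalently $\alpha_{\ell_j}(M)^*\circ\beta_{\ell_j}(M)_*$ when $\ell_j^2\mid N$ by Remark~\ref{rmk: another Hecke}) with the operator $U_{\ell_j}\colon\sum b_nq^n\mapsto\sum b_{\ell_j n}q^n$ on modular forms. This is the intended convention, but it is not stated explicitly in the lemma, and Remarks~\ref{rmk: another Hecke} and \ref{rem: new Hecke Eisenstein} exist precisely because the opposite convention $\alpha_*\circ\beta^*$ would require a different choice of $[\ell^r]^0$. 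If you want to be fully self-contained you should either cite the fact that the paper's $T_{\ell_j}$ induces $U_{\ell_j}$ on $q$-expansions (via the reference \cite[pg.~217]{MR91} the paper gives), or run the argument purely through the degeneracy-map identities of Remark~\ref{rem: another} as the paper's cited source does. Either way the substance of your argument is sound.
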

\begin{proof}
By the same argument as in \cite[Prop. 5.4]{Yoo6}, the first assertion follows by Remark \ref{rem: another}. Since $a(1; K)=1$, the second assertion follows by the definition of the ``level-raising'' maps.
\end{proof}

\begin{remark}\label{rem: new Hecke Eisenstein}
If we define $T_\ell$ as $\a_\ell(N)_* \circ \b_\ell(N)^*$, then we can replace the operator $[\ell^r]^0$ so that $E_0^\bde$ is annihilated by $T_{\ell_j}$ for all $t<j\le u$. Then all the remaining arguments in this paper can be easily modified. 
\end{remark}

\begin{lemma}\label{lemma: residue}
The residue of $E_0^{\bde}$ at any cusp of level $d$ is $0$ if $d$ is not squarefree, and
\begin{equation*}
\frac{1}{24}\prod_{i=1}^t  \varepsilon_i^{f_i}(\ell_i+\varepsilon_i)\prod_{j=t+1}^u \ell_j^{r_j-3}(1-\ell_j^2)(1-\ell_j)^{1-f_j} 
\end{equation*}
if $d=\prod_{i=1}^u \ell_i^{f_i}$ is squarefree. In particular, the residue of $E_0^{\bde}$ at any cusp of level $\prod_{i=1}^u \ell_i$ is $\pm\frac{\fn_0^{\bde}}{\prod_{j=t+1}^u \ell_j}$.
\end{lemma}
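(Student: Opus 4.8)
The plan is to compute the residue of $E_0^{\bde}$ cusp by cusp, exploiting the fact that $E_0^{\bde}$ is built from the level-$1$ Eisenstein series $K$ by a composition of explicit level-raising maps $[\ell_i]^{\pm}$ and $[\ell_j^{r_j}]^0$, and that residues transform predictably under the pullback operators $\alpha_\ell^*$ and $\beta_\ell^*$. Since $\cS_2(N)_\Q$ and the space of Eisenstein series both factor as tensor products over the prime divisors of $N$, and the level-raising maps act one prime at a time, I would first establish a multiplicativity statement: the residue of $E_0^{\bde}$ at a cusp $\vect{x}{d}$ with $d=\prod_i \ell_i^{f_i}$ equals $\frac{1}{24}$ times a product of local factors $\rho_{\ell_i}(f_i)$, one for each prime $\ell_i\mid N$, where $\rho_{\ell}(f)$ depends only on $f$, on $\operatorname{val}_\ell(N)$, and on the local choice ($\varepsilon_i$ for $i\le t$, and the operator $[\ell^r]^0$ for $j>t$). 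This reduces everything to a prime-by-prime local computation.

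The next step is the local computation. For $\ell=\ell_i$ with $i\le t$ (so $\ell\dmid N$), I would record how $\alpha_\ell(M)^*$ and $\beta_\ell(M)^*$ change the residue at a cusp of level $1$ versus level $\ell$ downstairs — using the standard formulas for the behaviour of Eisenstein series under these degeneracy maps and the known residue of $K$ (the constant term data of $K$ being essentially $\tfrac{1}{24}$). This gives $\rho_{\ell_i}(0)$ and $\rho_{\ell_i}(1)$, and they should come out proportional to $\varepsilon_i^{f_i}(\ell_i+\varepsilon_i)$ after normalizing so that $a(1;E_0^{\bde})=1$, consistent with Lemma \ref{lemma: Eisenstein series Hecke action}. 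For $\ell=\ell_j$ with $j>t$, I would use the description of $[\ell^r]^0$ in Remark \ref{rem: another} as $\alpha_\ell(M\ell^{r-1})^*\circ\cdots\circ\alpha_\ell(M\ell^2)^*\circ[\ell^2]^0$: the maps $\alpha_\ell(-)^*$ do not change residues at cusps above a fixed cusp downstairs except by keeping track of which cusps lie above which, so the only genuine computation is for $[\ell^2]^0=(\alpha_\ell(M\ell)^*-\tfrac1\ell\beta_\ell(M\ell)^*)\circ[\ell]^-$, and one reads off the local factor $\rho_{\ell_j}(f_j)$ for $0\le f_j\le r_j$. Here I expect $\rho_{\ell_j}(f_j)=0$ whenever $\ell_j^2\mid d$, i.e. $f_j\ge 2$, which is exactly what makes the residue vanish at cusps whose level is not squarefree; and for $f_j\in\{0,1\}$ the factor should be $\ell_j^{r_j-3}(1-\ell_j^2)(1-\ell_j)^{1-f_j}$ up to the global $\tfrac{1}{24}$.

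Assembling the local factors over all $\ell\mid N$ gives the displayed formula; the "in particular" clause then follows by specializing to $d=\prod_{i=1}^u\ell_i$, i.e. $f_i=1$ for every $i$: the $i\le t$ part contributes $\prod_{i=1}^t\varepsilon_i(\ell_i+\varepsilon_i)$, the $j>t$ part contributes $\prod_{j=t+1}^u\ell_j^{r_j-3}(1-\ell_j^2)=\pm\prod_{j=t+1}^u\ell_j^{r_j-2}(\ell_j^2-1)/\prod_{j=t+1}^u\ell_j$, and together with the overall $\tfrac{1}{24}$ this is $\pm\fn_0^{\bde}/\prod_{j=t+1}^u\ell_j$ once one checks the numerator is already an integer (or absorbs the denominator into the sign, since $\fn_0^{\bde}$ is defined as a numerator). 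The main obstacle will be the bookkeeping in the local computation for $j>t$: correctly tracking how a cusp of $X_0(M\ell^r)$ of a given level sits over cusps of the intermediate curves under the chain of $\alpha_\ell$'s, and getting the powers of $\ell_j$ and the factor $1-\ell_j$ versus $(1-\ell_j)^{1-f_j}$ exactly right, rather than off by a unit. I would cross-check the final normalization against Lemma \ref{lemma: Eisenstein series Hecke action} ($a(1;E_0^{\bde})=1$) and against the squarefree case already treated in \cite{Oh14}, where $N_2=1$ and the formula must reduce to Ohta's.
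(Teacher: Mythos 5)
Your proposal follows the same route as the paper: compute residues prime by prime through the level-raising maps, using the ramification indices of the degeneracy maps at cusps (from \cite[Lem.~2.1]{Yoo6}) and the pullback behaviour of residues (from \cite[Lem.~5.6]{Yoo6}), together with the residue data of $K$. The paper packages this as an induction on $u$, which is logically the same as your ``establish multiplicativity, then compute local factors per prime''; and, like you, it first reduces to one cusp per level by observing that residues of $E_0^{\bde}$ at all cusps of a fixed level $d$ agree because $E_0^{\bde}$ is built from degeneracy maps.

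One sentence in your plan needs correction before the computation can come out right. You write that ``the maps $\alpha_\ell(-)^*$ do not change residues at cusps above a fixed cusp downstairs except by keeping track of which cusps lie above which.'' Pullback along a covering map multiplies the residue at a simple pole by the ramification index at that point, and the cusps of low level are ramified: the chain $\alpha_\ell(M\ell^{r-1})^*\circ\cdots\circ\alpha_\ell(M\ell^2)^*$ is ramified with index $\ell^{r-2}$ at cusps of level $1$ and $\ell$. Combined with the $\ell^{-1}$ coming from $[\ell^2]^0=(\alpha_\ell(M\ell)^*-\tfrac1\ell\beta_\ell(M\ell)^*)\circ[\ell]^-$, this is exactly where the exponent $\ell_j^{r_j-3}$ in the stated formula comes from; if the $\alpha^*$'s genuinely left residues unchanged you would get $\ell_j^{-1}$ instead of $\ell_j^{r_j-3}$. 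You do flag ``getting the powers of $\ell_j$ exactly right'' as the main obstacle, so this is likely loose phrasing rather than a misconception, but it is precisely the step on which the formula depends, not dispensable bookkeeping.
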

\begin{proof}
We use the same argument as in \cite[Sec. 5.2]{Yoo6}. 

Since $E_0^\bde$ is constructed by the degeneracy maps (Remark \ref{rem: another}), the residues of $E_0^\bde$ at all cusps of level $d$ are the same by Lemmas 5.6 and 5.7 of \textit{loc. cit.} So it suffices to compute the residue of $E_0^\bde$ at a cusp of level $d$ for any divisor $d$ of $N$.

We prove the assertion by induction on $u$. First, let $N=\ell^r$ with $r\ge 1$ and $d=\ell^f$ for some $0\le f \le r$.
By \cite[Lem. 2.1]{Yoo6}, the ramification index of $\a_\ell(1)$ at a cusp of level $1$ (resp. $\ell$) is $\ell$ (resp. $1$).
Also, the ramification index of $\b_\ell(1)$ at a cusp of level $1$ (resp. $\ell$) is $1$ (resp. $\ell$).
Thus, by \cite[Lem. 5.6]{Yoo6} the residue of $[\ell]^-(K)$ at a cusp of level $1$ (resp. $\ell$)  is $\frac{\ell-1}{24}$ (resp. $\frac{1-\ell}{24}$).
Similarly, by \cite[Lems. 2.1 and 5.6]{Yoo6} the residue of $[\ell^2]^0(K)$ at a cusp of level $1$ is
\begin{equation*}
\frac{\ell(\ell-1)}{24}-\frac{(\ell-1)}{24\ell}=\frac{1}{24\ell}(1-\ell^2)(1-\ell).
\end{equation*}
Also, the residue of $[\ell^2]^0(K)$ at a cusp of level $\ell$ is
\begin{equation*}
\frac{1-\ell}{24}-\frac{\ell-1}{24\ell}=\frac{1-\ell^2}{24\ell}.
\end{equation*}
Furthermore, the residue of $[\ell^2]^0(K)$ at a cusp of level $\ell^2$ is 
\begin{equation*}
\frac{1-\ell}{24}-\frac{1-\ell}{24}=0.
\end{equation*}
This computation proves the assertion for $r\le 2$. Suppose that $r\ge 3$. Let
\begin{equation*}
\a=\alpha_\ell(\ell^2) \circ \cdots \circ \alpha_\ell(\ell^{r-1}) : X_0(\ell^2) \to X_0(\ell^r).
\end{equation*}
By \cite[Lem. 2.1]{Yoo6}, $\a$ is ramified at a cusp of level $1$ (resp. level $\ell$) of ramification index $\ell^{r-2}$. Thus by \cite[Lem. 5.6]{Yoo6}, the residues of $E_0^\bde=\a^*([\ell^2]^0(K))$ at cusps of level $1$ and $\ell$ are 
\begin{equation*}
\frac{\ell^{r-3}}{24}(1-\ell^2)(1-\ell) \qa \frac{\ell^{r-3}}{24}(1-\ell^2),
\end{equation*}
respectively. Moreover, for any $f\ge 2$ the image of a cusp of level $\ell^f$ in $X_0(\ell^r)$ by $\a$ is $\vect 1 {\ell^2}$. 
Thus, the residue of $E_0^\bde$ at a cusp of level $\ell^f$ with $f\ge 2$ is $0$. This proves the assertion for $u=1$.

Next, suppose that $u\ge 1$ and $d=\prod_{i=1}^u \ell_i^{f_i}$ is a divisor of $N$.
We divide into two cases.
\begin{enumerate}
\item
Suppose that $\ell=\ell_i$ for some $1\le i \le t$. Let $\ve=\ve_i \in \{ \pm 1\}$ and $f=f_i \in \{0, 1\}$. Also, let $M=N/\ell$ and $\d=\gcd(M, d)$. 
Furthermore, let $\bde'=(\ve_1, \dots, \widehat{\ve_i}, \dots, \ve_t) \in \{\pm 1\}^{t-1}$.
Then we can construct an Eisenstein series $E_0^{\bde'}$ of level $M$, which is denoted by $E$.
Since the ramification index of $\a_\ell(M)$ (resp. $\b_\ell(M)$) at a cusp of level $d$ is $\ell^{1-f}$ (resp. $\ell^f$), 
the residue of $E_0^\bde=[\ell]^{\ve} (E)$ at a cusp of level $d$ is 
\begin{equation*}
(\ell^{1-f} +\ve \ell^f)a=\ve^f(\ell+\ve)a,
\end{equation*}
where $a$ is the residue of $E$ at a cusp of level $\d$.
\item
Suppose that $\ell=\ell_j$ for some $t< j \le u$. Let $f=f_j$ and $r=r_j\ge 2$. Also, let $M=N\ell^{-r}$ and $\d=\gcd(M, d)$. Then as above, we can construct an Eisenstein series $E_0^\bde$ of level $M$, which is denoted by $E$.
Let $a$ be the residue of $E$ at a cusp of level $\d$ in $X_0(M)$. 
By the same argument as above, the residue of $[\ell]^-(E)$ at a cusp of level $\d$ (resp. $\d\ell$) is $(\ell-1)a$ (resp. $(1-\ell)a$). 
Again, the residue of $\a_\ell(M\ell)^*([\ell]^-(E))$ at a cusp of level $\d$ (resp. $\d \ell$) is $\ell(\ell-1)a$ (resp. $(1-\ell)a$). Similarly, the residue of $\b_\ell(M\ell)^*([\ell]^-(E))$ at a cusp of level $\d$ or $\d\ell$ is $(\ell-1)a$.
Thus, the residue of $[\ell^2]^0(E)$ at a cusp of level $\d$ is
\begin{equation*}
\ell(\ell-1)a-\frac{1}{\ell}(\ell-1)a=\frac{1}{\ell}(1-\ell^2)(1-\ell)a.
\end{equation*}
Similarly, the residue of $[\ell^2]^0(E)$ at a cusp of level $\d\ell$ is
\begin{equation*}
(1-\ell)a-\frac{1}{\ell}(\ell-1)a=\frac{1}{\ell}(1-\ell)(\ell+1)a=\frac{1}{\ell}(1-\ell^2)a.
\end{equation*}
Furthermore, the residue of $[\ell^2]^0(E)$ at a cusp of level $\d\ell^2$ is $0$. Since $\a=\alpha_\ell(M\ell^2) \circ \cdots \circ \alpha_\ell(M\ell^{r-1})$ is ramified at a cusp of level $\d$ or $\d\ell$ of ramification index $\ell^{r-2}$, the residue of $E_0^\bde=\a^*\circ [\ell^2]^0(E)$ at a cusp of level $d=\d \ell^f$ for $f \in \{0, 1\}$ (resp. for $f\ge 2$) is
$\ell^{r-3}(1-\ell^2)(1-\ell)^{1-f}a$ (resp. $0$). 
\end{enumerate}
By induction, the assertion follows.
\end{proof}

\ms
\subsection{Proof of Theorem \ref{thm: index}}
Suppose that $\T_p /{\fI_0^{\bde}} \simeq \zmod {p^m}$ for some $m\geq 0$.  Note that we have
\begin{equation*}
\T_p/{\fI_0^{\bde}} \surj \End(\br{[C_0^{\bde}]}[p^\infty]) \simeq \Z_{(p)}/{\fn_0^{\bde}\Z_{(p)}},
\end{equation*}
where the first surjection follows by Lemma \ref{lemma: C annihilated by I} and the second isomorphism follows by Lemma \ref{lemma: order}. Thus, we have
$m\geq \tn{val}_p(\fn_0^{\bde})$, and it suffices to show that $m\leq \tn{val}_p(\fn_0^{\bde})$. 
If $m=0$, then there is nothing to prove, so we assume that $m\geq 1$. 

We first claim that $E_0^{\bde} \in M_2^{\tn{reg}}(\Gamma_0(N); \Z_{(p)})^{\bde}$. For ease of notation, let
\begin{equation*}
M_2^{\tn{reg}}:=M_2^{\tn{reg}}(\Gamma_0(N); \Z_{(p)}) \qa M_2^B:=M_2^B(\Gamma_0(N); \Z_{(p)}).
\end{equation*}
By Lemma \ref{lemma: Eisenstein series Hecke action}, it suffices to show that $E_0^{\bde} \in M_2^{\tn{reg}}$.
Since $N$ is not squarefree, the residue of $E_0^\bde$ at the cusp $\infty$, which is of level $N$, is zero by Lemma
\ref{lemma: residue}. Thus, we have $a(0; E_0^\bde)=0$. (We can obtain this by the definition of $[\ell^r]^0$ as well.)
Also, for any $n\ge 1$, we have $a(n; E_0^\bde) \in \Z$ by the definition as $a(n; K) \in \Z$.
Thus, we have $E_0^\bde \in M_2^B(\G_0(N); \Z)$. Now, if $p$ does not divide $N$, then the claim follows as $M_2^B(\G_0(N); \Z)\subset M_2^B=M_2^{\tn{reg}}$. 
If $p \dmid N$, then the claim follows by Proposition (1.4.9) of \cite{Oh14} as $w_p(E_0^{\bde})=\pm E_0^{\bde}$.

Next, we note that $\T_p/{\fI_0^{\bde}}=\T_p^{\bde}/{\fI_0}$. So by Proposition \ref{prop: duality}, there is 
\begin{equation*}
f(q)=\textstyle\sum_{n\geq 1} a(n; f) \cdot q^n \in S_2^{\tn{reg}}(\Gamma_0(N); \zmod {p^m})^{\bde}
\end{equation*}
such that $a(n; f)=T_n \pmod {\fI_0} \in \T_p^{\bde}/{\fI_0} \simeq \zmod {p^m}$
whenever $\gcd(n, N_1)=1$.

Finally, we consider the image of $E_0^{\bde}$ in $M_2^{\tn{reg}}(\Gamma_0(N); \zmod {p^m})^{\bde}$, denoted by $E$. Note that $a(0; E)=0$ as $a(0; E_0^\bde)=0$. Thus, if we take 
\begin{equation*}
 g=E-f \in M_2^{\tn{reg}}(\Gamma_0(N); \zmod {p^m})^{\bde},
 \end{equation*}
then it satisfies all the assumptions in Proposition \ref{prop: g=0 when}. Thus, we have $E=f$. Since $f \in S_2^{\tn{reg}}(\Gamma_0(N); \zmod {p^m})^{\bde}$, the residue of $E_0^{\bde}$ at any cusp must be divisible by $p^m$. By Lemma \ref{lemma: residue}, we then have 
\begin{equation*}
m \leq \tn{val}_p\left(\frac{\fn_0^{\bde}}{\prod_{j=t+1}^u \ell_j} \right)=\tn{val}_p(\fn_0^{\bde})
\end{equation*}
as $p$ does not divide $N_2$. This completes the proof. \qed

\ms
\section{The kernel of an Eisenstein ideal in characteristic $p$}\label{section: multi}
As in the previous section, let $\bde=(\varepsilon_1, \dots, \varepsilon_t) \in \{\pm 1\}^t$.
In this section, we prove the following, which can be regarded as a generalization of Theorem \ref{thm: Ohta multi}.

\begin{theorem}\label{thm: multi one}
Suppose that $N$ is not squarefree, i.e., $N_2>1$. Also, suppose that $p^2$ does not divide $N$. Then $\cB_p[\fI_0^\bde]$ is cyclic as a $\T_p/{\fI_0^\bde}$-module.
\end{theorem}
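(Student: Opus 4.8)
The plan is to follow the proof of Theorem~\ref{thm: Ohta multi} almost verbatim, with $\fI_0^\bde$ and $\fm_0^\bde$ in place of $\fI^\bde$ and $\fm^\bde$, feeding in the new inputs from Sections~\ref{section: RCD} and~\ref{section: index}. First I would dispose of the case in which $\fm_0^\bde$ is not maximal: then $\T_p=\fI_0^\bde$ by Theorem~\ref{thm: index}, so $\cB_p[\fI_0^\bde]=0$ and there is nothing to prove. So assume $\fm_0^\bde$ is maximal; it is then the unique maximal ideal of $\T_p$ containing $\fI_0^\bde$, and by Nakayama's lemma it suffices to show that $\dim_{\F_p}\cB_p[\fm_0^\bde]\le 1$. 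Since $p$ is odd, the specialization map $\iota_p\colon J_0(N)(\Q)_\tor\to\cJ_p(\F_p)$ is injective by Katz \cite[App.]{Ka81}, where $\cJ_p$ denotes the special fiber of the N\'eron model of $J_0(N)$ at $p$; hence it is enough to bound $\dim_{\F_p}\cJ_p(\F_p)[\fm_0^\bde]\le 1$. As $p^2\nmid N$, I would then treat the cases $p\nmid N$ and $p\dmid N$ separately.

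If $p\nmid N$, then $\cJ_p=J_0(N)_{/\F_p}$ has good reduction, and I would reproduce the argument of Proposition~3.5.4 of \cite{Oh14}: via the Cartier operator one relates $\cJ_p[\fm_0^\bde]$ to the $\fm_0^\bde$-eigenspace of $H^0(X_0(N)_{/\F_p},\Omega)$, one checks that the connected part of the finite flat group scheme $\cJ_p[\fm_0^\bde]$ is of multiplicative type (hence contributes no $\F_p$-rational points), and one is reduced to the mod $p$ multiplicity-one statement that $S_2^{\tn{reg}}(\Gamma_0(N);\F_p)[\fm_0^\bde]$ has dimension at most one. For the latter I would use Proposition~\ref{prop: g=0 when}: a cusp form $f$ over $\F_p$ killed by $\fm_0^\bde$ has $a(n;f)=0$ whenever $\gcd(n,N_2)>1$ (because $T_{\ell_j}(f)=0$ for all $t<j\le u$, so $a(n;f)$ vanishes at every multiple of such $\ell_j$), and $a(n;f)$ equal to the corresponding coefficient of $\ov{E_0^\bde}$ whenever $\gcd(n,N_1)=1$; subtracting a suitable multiple of $\ov{E_0^\bde}$ from $f$ therefore yields an element of $M_2^{\tn{reg}}(\Gamma_0(N);\F_p)^\bde$ satisfying all the hypotheses of Proposition~\ref{prop: g=0 when} (its constant term vanishes because $N$ is not squarefree), hence zero; so $f\in\F_p\cdot\ov{E_0^\bde}$. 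Granting $\dim_{\ov{\F_p}}\cJ_p[\fm_0^\bde]\le 2$, the connected--\'etale sequence then forces $\dim_{\F_p}\cJ_p(\F_p)[\fm_0^\bde]\le 1$.

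If $p\dmid N$, write $N=pM$ with $p=\ell_{i_0}$ for some $1\le i_0\le t$ and $p\nmid M$; note that $M$ is again not squarefree, since $N_2\mid M$ and $N_2>1$. Then $X_0(N)_{/\F_p}$ is semistable, the union of two copies of $X_0(M)_{/\F_p}$ crossing transversally at the supersingular points, so $\cJ_p$ is an extension of $(J_0(M)_{/\F_p})^{\oplus 2}$ by a torus, with component group $\Phi_p$. Following the argument of Proposition~3.5.9 of \cite{Oh14}, the abelian and toric contributions to $\cJ_p(\F_p)[\fm_0^\bde]$ are governed by the good-reduction analysis of the previous paragraph carried out at level $M$ (using $p\nmid M$, and that $w_p-\varepsilon_{i_0}\in\fm_0^\bde$ identifies the two components), which leaves the component group. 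By \cite{Ed91, KY18}, $\Phi_p$ decomposes as a large cyclic group together with elementary $2$- and $3$-subgroups, the elementary $3$-part arising from the $\ell$-old subvariety at primes $\ell\mid M$ with $\ell\equiv 1\pmod 3$. The decisive point --- and the reason that, unlike in Theorem~\ref{thm: Ohta multi}, no hypothesis on $p=3$ is required --- is that the relations $T_{\ell_j}=0$ ($t<j\le u$) confine $\cB_p[\fm_0^\bde]$ to $\bigcap_{j>t}\ker T_{\ell_j}$, on which, by the component-group computations of \cite{KY18}, the image of $\iota_p$ meets none of these extra elementary $3$-subgroups; hence $\Phi_p[\fm_0^\bde]$ is cyclic whether $p\ge 5$ or $p=3$. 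Assembling the three contributions gives $\dim_{\F_p}\cJ_p(\F_p)[\fm_0^\bde]\le 1$.

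The routine steps are the Nakayama and specialization reductions and Ohta's Cartier-operator argument for $p\nmid N$, which now go through thanks to Proposition~\ref{prop: g=0 when}. The step I expect to be the main obstacle is the last one: making rigorous, via the explicit structure of $\Phi_p$ in \cite{Ed91, KY18} and the description of $\cB_p[\fm_0^\bde]$ through the rational cuspidal divisor $C_0^\bde$ of Section~\ref{section: RCD}, the claim that imposing $T_{\ell_j}=0$ removes the elementary $3$-part of the component group from the $\fm_0^\bde$-eigenspace.
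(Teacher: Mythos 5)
Your overall architecture matches the paper's: reduce via Nakayama and Katz's injectivity of $\iota_p$ to bounding $\dim_{\F_p}\cJ_p(\F_p)[\fm_0^\bde]$, then split into $p\nmid N$ (good reduction, Ohta's Proposition 3.5.4 argument verbatim, with Proposition~\ref{prop: g=0 when} feeding the mod-$p$ multiplicity-one step) and $p\dmid N$ (semistable reduction, the two exact sequences, and the three-part analysis). Your $p\nmid N$ case is essentially what the paper does.

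However, your treatment of the component group in the $p\dmid N$ case both misses the paper's decisive simplification and, as written, does not yield the claimed bound. The paper's observation is much crisper than the route you sketch: pick any prime $\ell\mid N_2$; since $p^2\nmid N$ we have $\ell\neq p$, and by \cite[Th. 1.1]{KY18} the Hecke operator $T_\ell$ acts on the entire component group $\Phi_p$ as multiplication by $\ell$, which is a unit modulo $p$. Hence $\Phi_p(\F_p)[\fm_0^\bde]\subset\Phi_p[p,T_\ell]=0$ — the whole component-group contribution vanishes, with no need to invoke the Edixhoven/Kim--Yoo structure theorem for $\Phi_p$, distinguish elementary $3$-parts, or analyze the image of $\iota_p$. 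Your version concludes only that ``$\Phi_p[\fm_0^\bde]$ is cyclic,'' but that is not enough: cyclicity allows dimension $1$ from $\Phi_p$, which added to the dimension $\le 1$ abelian contribution would only give $\le 2$; you need $\Phi_p[\fm_0^\bde]=0$, which is exactly what the $T_\ell=\ell$ relation delivers. Your intuition that ``imposing $T_{\ell_j}=0$ removes the component group from the $\fm_0^\bde$-eigenspace'' is correct, and you correctly identify that this is why the $p=3$ hypothesis of Theorem~\ref{thm: Ohta multi} is not needed; but the mechanism is the elementary fact above, not a careful study of which pieces of $\Phi_p$ the image of $\iota_p$ can meet.

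One further point your sketch leaves implicit: for the abelian quotient $J_0(M)_{/\F_p}\times J_0(M)_{/\F_p}$, the paper constructs the ideal $\fI'\subset\T(M)$, checks $\T'=\T(M)$ via Ribet's lemma, and uses the rational cuspidal divisor $C=C_0^{\bde'}$ of level $M$ (still of degree $0$ since $M$ is not squarefree) to verify that $\fm'=(p,\fI')$ is actually maximal, before appealing to the good-reduction bound at level $M$ and then identifying $J_0(M)_{/\F_p}(\F_p)[\fm']$ with the $\fm_0^\bde$-kernel of the product via $x\mapsto(x,\varepsilon_{i_0}x)$. You gesture at this with ``using $p\nmid M$, and that $w_p-\varepsilon_{i_0}\in\fm_0^\bde$ identifies the two components,'' but the maximality check at level $M$ via $C_0^{\bde'}$ deserves to be spelled out.
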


\begin{proof}
As in the proof of Theorem \ref{thm: Ohta multi}, we may assume that $\fm_0^\bde$ is maximal and 
it suffices to show that the dimension of $\cJ_p(\F_p)[\fm_0^\bde]$ over $\T_p/{\fm_0^\bde} \simeq \F_p$ is at most $1$, where $\cJ_p$ is the special fiber of the N\'eron model of $J_0(N)$ at $p$.

We divide into two cases: 
\begin{enumerate}
\item \label{eqnn1}
Suppose that $p$ does not divide $N$ and so $\cJ_p=J_0(N)_{/\F_p}$ as $J_0(N)$ has good reduction at $p$.
Then the result follows by Mazur's argument \cite[Ch. II, Cor. 14.8]{M77}. For instance, see Ohta's argument in \cite[Prop. 3.5.4]{Oh14}, which is a direct generalization of Mazur's to squarefree level. It works verbatim for non-squarefree $N$.

\item  \label{eqnn2}
Suppose that $N=Mp$ with $\gcd(M, p)=1$ and $p\geq 3$. By Deligne--Rapoport \cite{DR73}, $X_0(N)_{\F_p}$ consists of two copies of $X_0(M)_{/\F_p}$ which meet transversally at supersingular points. By the theory of Picard functor by Raynaud \cite{Ra70} (which is again explained in \cite[Ch. 9]{BLR90}), there is a Hecke-equivariant exact sequence:
\begin{equation}\label{eqn1}
\xymatrix{
0 \ar[r] & \cJ_p^0 \ar[r] & \cJ_p \ar[r] & \Phi_p \ar[r] & 0,
}
\end{equation}
where $\cJ_p^0$ is the identity component and $\Phi_p$ is the component group. Moreover,  there is also a Hecke-equivariant exact sequence:
\begin{equation}\label{eqn2}
\xymatrix{
0 \ar[r] & \cT_p \ar[r] & \cJ_p^0 \ar[r] & J_0(M)_{/\F_p} \times J_0(M)_{/\F_p} \ar[r] & 0,
}
\end{equation}
where $\cT_p$ is the torus of $\cJ_p$. 
To prove the assertion, we compute the kernels of $\fm_0^\bde$ on the $\F_p$-points of $\cT_p$, $\Phi_p^0$ and $J_0(M)_{/\F_p} \times J_0(M)_{/\F_p}$, respectively.
\begin{enumerate}
\item  \label{eqnna}
$\cT_p(\F_p)[\fm_0^{\bde}]=0$ as $\cT_p(\F_p)$ does not have non-trivial $p$-torsion.
\item  \label{eqnnb}
Let $\ell$ be a prime divisor of $N_2$, i.e., $\ell^2$ divides $N$. Thus, by our assumption we have $\ell \ne p$. Then $\Phi_p(\F_p)[\fm_0^{\bde}] \subset \Phi_p[p, T_{\ell}]=0$ as  $T_{\ell}$ acts on $\Phi_p$ by $\ell$ (cf. \cite[Th. 1.1]{KY18}).
\item  \label{eqnnc}
We rearrange indices so that $p=\ell_1$. For avoidance of confusion, for a prime $\ell$, let $\tau_\ell$ be the $\ell$th Hecke operator of level $M$ acting on $J_0(M)$, and for any $2\leq i \leq t$, let $\omega_{i}$ be the Atkin--Lehner operator with respect to $\ell_i$ acting on $J_0(M)$. Let
\begin{equation*}
\T':=\Z[\tau_\ell, \omega_{i} : \text{ for any primes } \ell \nmid N_1 \qqa 2\leq i \leq t] \subset \End(J_0(M)).
\end{equation*}

As a variant of Ribet's lemma \cite[pg. 491]{Wi95}, we have $\T'=\T(M)$. (Note that $\T(M)$ contains the $p$th Hecke operator $\tau_p$, but $\T'$ does not. Nonetheless, the claim follows as $p$ is odd.) Let
\begin{equation*}
\fI'=(\omega_{i}-\epsilon_i, \tau_{\ell_j}, \tau_\ell-\ell-1 : \text{ for any primes } \ell \nmid N \qqa 2\leq i \leq t < j \le u)
\end{equation*}
be the corresponding ideal of $\fI_0^\bde$ in $\T'$. 
Then we can construct a rational cuspidal divisor $C$ on $X_0(M)$ annihilated by $\fI'$.
Indeed, if we let $\bde':=(\varepsilon_2, \dots, \varepsilon_t) \in \{\pm 1\}^{t-1}$, then we set $C:=C_0^{\bde'} \in \Qdiv M$, which is of degree $0$ as $M$ is not squarefree (see Definition \ref{def: RCD}).
By Lemma \ref{lemma: order}, the order of the linear equivalence class of $C$ in $J_0(M)$ is either $\frac{\fn_0^{\bde}}{(p+\varepsilon_1)}$ or $\frac{2\fn_0^{\bde}}{(p+\varepsilon_1)}$. Thus, its $p$-part is equal to that of $\fn_0^{\bde}$, and so the ideal $\fm':=(p, \fI_0^{\bde'}) \subset \T(M)$ is also maximal. 
Since $p$ does not divide $M$, by the same argument as in \eqref{eqnn1} above we have
\begin{equation*}
\dim_{\F_p} J_0(M)_{/\F_p}(\F_p)[\fm'] \leq 1.
\end{equation*}
Note that $\T(N)$ acts on $J_0(M)_{/\F_p} \times J_0(M)_{/\F_p}$ as follows: For any primes $\ell$ not dividing $N_1$ and $2\leq i \leq t$, we have
\begin{equation*}
w_p=\mat 0 1 1 0, \quad T_\ell=\mat {\tau_\ell} 0 0 {\tau_\ell} \qa w_{\ell_i}=\mat {\omega_{i}} 0 0 {\omega_{i}}.
\end{equation*}
(For instance, see the proof of \cite[Prop. 3.5.9]{Oh14}.) Thus, there is an isomorphism
\begin{equation*}
J_0(M)_{/\F_p}(\F_p)[\fm'] \simeq (J_0(M)_{/\F_p} \times J_0(M)_{/\F_p})(\F_p)[\fm_0^{\bde}]
\end{equation*}
sending $x$ to $(x, \varepsilon_1 x)$, and so the dimension of $(J_0(M)_{/\F_p} \times J_0(M)_{/\F_p})(\F_p)[\fm_0^{\bde}]$ is at most $1$ .
\end{enumerate}

In conclusion, by \eqref{eqnnb} and \eqref{eqn1} we have $\cJ_p(\F_p)[\fm_0^\bde]=\cJ_p^0(\F_p)[\fm_0^\bde]$. Also, by \eqref{eqnna} and \eqref{eqn2} we have
$\cJ_p^0(\F_p)[\fm_0^\bde]=(J_0(M)_{/\F_p} \times J_0(M)_{/\F_p})(\F_p)[\fm_0^{\bde}]$. Thus, the assertion follows.
\end{enumerate}
This completes the proof.
\end{proof}

\begin{remark}
The proof of \eqref{eqnn2} above is almost the same as that of \cite[Prop. 3.5.9]{Oh14}. More specifically, the only difference is the component group part \eqref{eqnnb}. 
In our case, \eqref{eqnnb} is much simpler thanks to the existence of the prime divisor $\ell$ of $N_2$. For instance, it is not necessary to distinguish the case $\bde'=\bde_+'$ with others (or to require that $\Phi_3$ does not have extra elementary $3$-groups). 
\end{remark}

\ms
\section{Proof of the main theorem}\label{section: proof}
In this section, we prove Theorem \ref{theorem : main theorem}. We use the same notation as in the previous sections. In particular, we denote by
\begin{equation*}
\cA_p=\scC(N)[p^\infty] \qa  \cB_p=J_0(N)(\Q)_\tor[p^\infty].
\end{equation*}
During the section, we assume that $p$ is an odd prime whose square does not divide $N$.
If $p=3$ (so $N$ is not divisible by $9$), then we further assume that there is a prime divisor of $N$ congruent to $-1$ modulo $3$. 

\ms
First, by our assumption on $p$ we can use Theorems \ref{thm: index} and \ref{thm: multi one}. 
Thus, by the same argument as in Section \ref{section: strategy}, we have $\# \cB_p[\fI_0^\bde] \le p^{\val_p(\fn_0^\bde)}$ for any $\bde \in \{\pm 1\}^\bde$.
Since $\cA_p[\fI_0^\bde]$ contains $\br{[C_0^\bde]}[p^\infty]$ by Lemma \ref{lemma: C annihilated by I}, 
we have $\# \cA_p[\fI_0^\bde] \ge \br{[C_0^\bde]}[p^\infty]=p^{\val_p(\fn_0^\bde)}$ by Lemma \ref{lemma: order}. 
Since we have $\cA_p[\fI_0^\bde] \subset \cB_p[\fI_0^\bde]$, for any $\bde \in \{\pm 1\}^\bde$ we have  
\begin{equation}\label{eqn7575}
\cA_p[\fI_0^\bde]=\cB_p[\fI_0^\bde].
\end{equation}
 
Next, we consider the following Eisenstein ideal of $\T_p$:
\begin{equation*}
\fI_0 :=(T_{\ell_j}, \fI^N : t+1 \le j \le u).
\end{equation*}
By the same argument as in Lemma \ref{lemma: decomposition}, we have
\begin{equation*}
\T_p/{\fI_0} \simeq \prod_{\bde \in \{\pm 1\}^t} \T_p/{\fI_0^\bde}.
\end{equation*}
Thus, as $\T_p/{\fI_0}$-modules we have the following decompositions:
\begin{equation*}
\cA_p[\fI_0]  \simeq \moplus_{\bde \in \{\pm 1\}^t} \cA_p[\fI_0^\bde] \qa \cB_p[\fI_0]  \simeq \moplus_{\bde \in \{\pm 1\}^t} \cB_p[\fI_0^\bde].
\end{equation*}
By \eqref{eqn7575} we have
\begin{equation}\label{eqn22}
\cA_p[\fI_0]=\cB_p[\fI_0].
\end{equation}

Finally, by Lemma  \ref{lemma: ES relation} we have $\cA_p=\cA_p[\fI^N]$ and $\cB_p=\cB_p[\fI^N]$.
Similarly, we have
\begin{equation*}
 \cA_p[\fI_0]=\cA_p[T_{\ell} : \ell \mid N_2] \qa \cB_p[\fI_0]=\cB_p[T_{\ell} : \ell \mid N_2].
\end{equation*}
Thus, it suffices to prove the following implication:
\begin{equation*}
\cA_p[T_{\ell} : \ell \mid N_2]=\cB_p[T_{\ell} : \ell \mid N_2] \imply \cA_p=\cB_p.
\end{equation*}
One of the key contributions of this paper is the following inductive argument.
\begin{theorem}\label{thm: inductive method}
Suppose that $\scC(N/{\ell})[p^\infty]=J_0(N/{\ell})(\Q)_\tor[p^\infty]$ for all prime divisors $\ell$ of $N_2$. Then we have
\begin{equation*}
\cA_p[T_{\ell} : \ell \mid N_2]=\cB_p[T_{\ell} : \ell \mid N_2] \imply \cA_p=\cB_p.
\end{equation*}
\end{theorem}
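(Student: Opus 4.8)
The plan is to propagate the equality assumed at each level $N/\ell$ (for $\ell\mid N_2$) back up to level $N$ by means of the degeneracy maps, and then to combine the resulting information with the hypothesis $\cA_p[T_\ell:\ell\mid N_2]=\cB_p[T_\ell:\ell\mid N_2]$ via a counting argument; throughout we regard $\cA_p\subseteq\cB_p$ as $\T_p/\fI^N$-modules (Lemma \ref{lemma: ES relation}). Fix a prime $\ell\mid N_2$ and set $M:=N/\ell$, so that $\ell\mid M$. By Remark \ref{rmk: another Hecke} one has $T_\ell=\alpha_\ell(M)^*\circ\beta_\ell(M)_*$ on $J_0(N)$; since $\ell\mid M$ the map $\alpha_\ell(M)$ has degree $\ell$, so $\alpha_\ell(M)_*\circ\alpha_\ell(M)^*$ is multiplication by $\ell$ and, as $p\neq\ell$, the pullback $\alpha_\ell(M)^*$ is injective on $p$-primary torsion. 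Hence $\ker\!\big(\beta_\ell(M)_*|_{\cB_p}\big)=\cB_p[T_\ell]$ and likewise for $\cA_p$. On the other hand the hypothesis $\scC(M)[p^\infty]=J_0(M)(\Q)_\tor[p^\infty]$ forces $\beta_\ell(M)_*(\cB_p)\subseteq\scC(M)[p^\infty]$, while Lemma \ref{lemma: image of beta}, read on $p$-primary parts (where the possible $(\Z/\ell)^k$-discrepancy is invisible because $\ell\neq p$), gives $\beta_\ell(M)_*(\cA_p)=\scC(M)[p^\infty]$. Therefore $\beta_\ell(M)_*$ induces isomorphisms
\begin{equation*}
\cB_p/\cB_p[T_\ell]\;\xrightarrow{\ \sim\ }\;\scC(M)[p^\infty]\;\xleftarrow{\ \sim\ }\;\cA_p/\cA_p[T_\ell],
\end{equation*}
so that $[\cB_p:\cA_p]=[\cB_p[T_\ell]:\cA_p[T_\ell]]$, and moreover $T_\ell(\cB_p)=\alpha_\ell(M)^*\beta_\ell(M)_*(\cB_p)=\alpha_\ell(M)^*\beta_\ell(M)_*(\cA_p)=T_\ell(\cA_p)$. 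In particular, if $N_2$ has a single prime divisor $\ell$, then $\cB_p[T_\ell]=\cB_p[T_{\ell'}:\ell'\mid N_2]=\cA_p[T_{\ell'}:\ell'\mid N_2]=\cA_p[T_\ell]$ by hypothesis, whence $\cB_p=\cA_p$.

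For general $N_2$ I would organize the argument around the simultaneous degeneracy map $\Theta:=\bigoplus_{\ell\mid N_2}\beta_\ell(N/\ell)_*\colon\cB_p\to\bigoplus_{\ell\mid N_2}\scC(N/\ell)[p^\infty]$: by the single-prime analysis above, $\ker(\Theta|_{\cB_p})=\bigcap_{\ell\mid N_2}\cB_p[T_\ell]=\cB_p[T_\ell:\ell\mid N_2]$ and $\ker(\Theta|_{\cA_p})=\cA_p[T_\ell:\ell\mid N_2]$, and these coincide by hypothesis. Hence $\cA_p=\cB_p$ is \emph{equivalent} to the equality of images $\Theta(\cA_p)=\Theta(\cB_p)$, and since $\cA_p\subseteq\cB_p$ only the inclusion $\Theta(\cB_p)\subseteq\Theta(\cA_p)$ remains to be proved. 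Concretely, this would be carried out by induction on the number of prime divisors of $N_2$, the single-prime step being reapplied to the pair $\big(\cB_p[T_\ell],\,\cA_p[T_\ell]\big)$ together with the remaining operators $T_{\ell'}$ ($\ell'\mid N_2$, $\ell'\neq\ell$): the degeneracy maps at distinct primes commute with one another and with the Hecke operators at the other primes, so a further $\beta_{\ell'}(N/\ell')_*$ carries $\cB_p[T_\ell]$ into the subgroup of $\scC(N/\ell')[p^\infty]=J_0(N/\ell')(\Q)_\tor[p^\infty]$ annihilated by $T_\ell$, and similarly for $\cA_p[T_\ell]$; the base of the induction is the squarefree case treated in Section \ref{section: strategy}.

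The heart of the matter — and the step I expect to be the main obstacle — is exactly this last inclusion: one must show that every $x\in\cB_p$ can be adjusted, modulo $\cA_p$, by an element of the common kernel $\cB_p[T_\ell:\ell\mid N_2]$, or equivalently that $\scC(N)[p^\infty]$ realises every \emph{compatible} system of classes in the groups $\scC(N/\ell)[p^\infty]$. The subtlety is that a further degeneracy map need not be injective on a partial Hecke-kernel such as $\cB_p[T_\ell]$, and that surjectivity of $\beta_\ell(N/\ell)_*$ from the \emph{full} cuspidal group does not formally restrict to surjectivity on the sub-Hecke-modules cut out by the remaining $T_{\ell'}$. I expect this to be overcome by combining the explicit description of the cusps of $X_0(N)$ and of the action of the degeneracy maps on rational cuspidal divisors from \cite{Yoo9} with a careful level-by-level matching of the filtration of $\cB_p$ by the kernels $\cB_p[T_{\ell_{t+1}},\dots,T_{\ell_{t+k}}]$ against the corresponding filtration of $\cA_p$, using at the $k$-th stage the hypothesis at level $N/\ell_{t+k}$ and the commutation of the degeneracy maps with the operators $T_{\ell_{t+1}},\dots,T_{\ell_{t+k-1}}$.
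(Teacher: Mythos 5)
Your single-prime analysis is essentially the paper's argument. You rediscover the identity $T_\ell=\alpha_\ell(M)^*\circ\beta_\ell(M)_*$ (Remark~\ref{rmk: another Hecke}), the injectivity of $\alpha_\ell(M)^*$ on $p$-primary torsion (because $\alpha_{\ell*}\alpha_\ell^*=\ell$ and $\ell\neq p$), the identification $\cB_p[T_\ell]=\ker\bigl(\beta_\ell(M)_*|_{\cB_p}\bigr)$, and the equality $\beta_\ell(M)_*(\cA_p)=\beta_\ell(M)_*(\cB_p)=\scC(M)[p^\infty]$, with the $(\Z/\ell)^k$ ambiguity in Lemma~\ref{lemma: image of beta} killed by $\ell\neq p$. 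The paper packages the same facts into a five-lemma diagram
$0 \to \cB_p[T_\ell] \to \cB_p \xrightarrow{T_\ell} T_\ell(\cB_p) \to 0$
over the analogous row for $\cA_p$ rather than into an index count, and deduces at once $T_\ell(\cA_p)=T_\ell(\cB_p)$ and ``$\cA_p[T_\ell]=\cB_p[T_\ell]\imply\cA_p=\cB_p$.'' So for $N_2$ a prime power your argument is correct and is the paper's.

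For several primes dividing $N_2$, however, your proposal stops short of a proof: you describe a ``simultaneous degeneracy'' map $\Theta$, correctly reduce the problem to $\Theta(\cB_p)\subseteq\Theta(\cA_p)$, and then only \emph{flag} the obstacle --- that surjectivity of $\beta_{\ell*}$ onto $\scC(N/\ell)[p^\infty]$ does not formally restrict to surjectivity on the partial kernels such as $\cA_p[T_{\ell'}]$ --- announcing an expectation that this can be overcome by explicit cuspidal computations. The paper instead iterates the five-lemma step, replacing $(\cA_p,\cB_p)$ by $(\cA_p[T_{\ell'}],\cB_p[T_{\ell'}])$; to do this it needs $T_\ell(\cA_p[T_{\ell'}])=T_\ell(\cB_p[T_{\ell'}])$, which it obtains from the chain $T_\ell(\cA_p[T_{\ell'}])=T_\ell(\cA_p)[T_{\ell'}]=T_\ell(\cB_p)[T_{\ell'}]=T_\ell(\cB_p[T_{\ell'}])$. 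Note that the two outer links of this chain are precisely the surjectivity-on-partial-kernels statement you singled out (an equality of the form $T_\ell(V[T_{\ell'}])=(T_\ell V)[T_{\ell'}]$, which is \emph{not} an identity for arbitrary commuting endomorphisms of a finite abelian $p$-group), so your worry lands on exactly the point the paper dispatches in one line. You would complete your proposal by establishing that equality --- e.g.\ by arguing at the level of $\scC(M)[p^\infty]$ via the hypothesis $\scC(M)[p^\infty]=J_0(M)(\Q)_\tor[p^\infty]$ and the $\T(M)$-equivariance of $\beta_{\ell*}$ and $\alpha_\ell^*$ --- and then running the paper's successive replacement $(\cA_p,\cB_p)\rightsquigarrow(\cA_p[T_{\ell'}],\cB_p[T_{\ell'}])\rightsquigarrow\cdots$; deferring instead to an unspecified ``level-by-level matching'' leaves a genuine gap in your write-up.
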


\begin{proof}
Suppose that $\ell$ is a prime divisor of $N_2$, i.e., $\ell^2$ divides $N$. By our assumption on $p$, we also have $\ell \ne p$. 
Since $T_\ell$ preserves $\cA_p$ and $\cB_p$, we have the following commutative diagram, where the two rows are exact and
the vertical arrows are induced by the natural inclusion $\scC(N) \subseteq J_0(N)(\Q)_\tor$:
\begin{equation*}
\xymatrix{
0 \ar[r] & \cB_p[T_\ell] \ar[r] & \cB_p[\fI^N] \ar[r]^-{T_\ell} & T_\ell(\cB_p) \ar[r] & 0 \\
0 \ar[r] & \ar[u] \cA_p[T_\ell] \ar[r] & \ar[u]  \cA_p[\fI^N] \ar[r]^-{T_\ell} & \ar[u]  T_\ell(\cA_p) \ar[r] & 0.
}
\end{equation*}
Since all the vertical arrows are injective, by five lemma, $\cA_p=\cB_p$ if $\cA_p[T_\ell]=\cB_p[T_\ell]$ and $T_\ell(\cA_p)=T_\ell(\cB_p)$.

Let $M=N/\ell$, and let
\begin{equation*}
\alpha:=\alpha_\ell(M) \qa \beta:=\beta_\ell(M)
\end{equation*}
denote two degeneracy maps from $X_0(N)$ to $X_0(M)$. 
Since the degeneracy map $\beta$ is defined over $\Q$, we have
\begin{equation*}
\beta_{*}(\cB_p) \subseteq J_0(M)(\Q)_\tor[p^\infty]=\scC(M)[p^\infty],
\end{equation*}
where the equality follows by our induction hypothesis. Although the definition of $T_\ell$ is given by $\beta_\ell(N)_* \circ \alpha(N)^*$ (Section \ref{section: notation}),  we have $T_\ell=\alpha^* \circ \beta_{*}$ as endomorphisms of $J_0(N)$ as $\ell^2$ divides $N$ (Remark \ref{rmk: another Hecke}). 
Since we have $\beta_{*}(\cA_p)=\scC(M)[p^\infty]$ by Lemma \ref{lemma: image of beta}, we have
\begin{equation*}
T_\ell(\cB_p) =\alpha^*(\beta_{*}(\cB_p))\subseteq \alpha^*(\scC(M)[p^\infty])=\alpha^*(\beta_{*}(\cA_p))=T_\ell(\cA_p).
\end{equation*}
Note that $\cA_p \subseteq \cB_p$ and so we have $T_\ell(\cA_p)=T_\ell(\cB_p)$. 
Hence we have $\cA_p=\cB_p$ if $\cA_p[T_\ell] = \cB_p[T_\ell]$.

Let $\ell'$ be a prime divisor of $N_2$ different from $\ell$. Then by the same argument as above, $\cA_p=\cB_p$ if $\cA_p[T_{\ell'}]=\cB_p[T_{\ell'}]$.
Since the operators $T_\ell$ and $T_{\ell'}$ commute with each other, $T_\ell$ preserves $\cA_p[T_{\ell'}]$ and $\cB_p[T_{\ell'}]$, and moreover  we have
\begin{equation*}
T_\ell(\cA_p[T_{\ell'}])=T_\ell(\cA_p)[T_{\ell'}]=T_\ell(\cB_p)[T_{\ell'}]=T_\ell(\cB_p[T_{\ell'}])
\end{equation*}
as $T_\ell(\cA_p)=T_\ell(\cB_p)$. Thus, if we replace $\cA_p$ and $\cB_p$ by $\cA_p[T_{\ell'}]$ and $\cB_p[T_{\ell'}]$, respectively, then we have the following:
\begin{equation*}
\cA_p[T_{\ell'}][T_\ell] = \cB_p[T_{\ell'}][T_\ell] \imply \cA_p[T_\ell']=\cB_p[T_\ell'] \imply \cA_p=\cB_p.
\end{equation*}
Doing this successively, we have
\begin{equation*}
\cA_p[T_\ell : \ell \mid N_2]=\cB_p[T_\ell : \ell \mid N_2] \imply \cA_p=\cB_p.
\end{equation*}
This completes the proof.
\end{proof}

We finish the proof of Theorem \ref{theorem : main theorem} by induction.
\begin{proof}[Proof of Theorem \ref{theorem : main theorem}]
By the result in Section \ref{section: strategy}, we have
\begin{equation}\label{eqn: 00}
\scC(N')[p^\infty]=J_0(N')(\Q)_\tor[p^\infty],
\end{equation}
where $N'=\prod_{i=1}^u \ell_i$. Thus, the result follows when $N$ is squarefree.

Next, let $d=\prod_{i=1}^u \ell_i^{f_i}$ be a divisor of $N$ such that $1\leq f_i\leq r_i$ for all $1\leq i \leq u$. If $d$ is not squarefree (including $d=N$), then 
by \eqref{eqn22} we have
\begin{equation*}
\scC(d)[p^\infty, T_{\ell} : \ell^2 \mid d]=J_0(d)(\Q)_\tor[p^\infty, T_{\ell} : \ell^2 \mid d].
\end{equation*}
Thus, the result easily follows by induction on $n(d)$, where $n(d):=\sum_{j=t+1}^u (f_j-1)$. Indeed, if $n(d)=1$, then all the assumptions in Theorem \ref{thm: inductive method} are satisfied for $N=d$ as we have (\ref{eqn: 00}), and so 
\begin{equation*}
\scC(d)[p^\infty]=J_0(d)(\Q)_\tor[p^\infty].
\end{equation*}
Suppose that $n(d)=2$. Since $n(d/\ell)=1$ for any prime $\ell$ such that $\ell^2$ divides $d$, the assumption in Theorem \ref{thm: inductive method} holds. Hence 
we have 
\begin{equation*}
\scC(d)[p^\infty]=J_0(d)(\Q)_\tor[p^\infty].
\end{equation*}
This proves the result for all $d$ with $n(d)=2$. Doing this successively, we complete the proof.
\end{proof}

\begin{remark}
We would like to point out a possible generalization of the methods used, and obstacles in the generalization. Suppose that $p^2$ does divide $N$. If we could prove Theorems \ref{thm: index}, 
\ref{thm: multi one} and \ref{thm: inductive method}, then all the arguments above would work verbatim. Thus, it suffices to prove such generalizations.

First, Theorem \ref{thm: inductive method} is true under the additional assumption that $p^4$ does not divide $N$ thanks to Lemma \ref{lemma: image of beta}. If $p^4$ does divide $N$, we need another argument.

Next, to prove Theorem \ref{thm: index}  we should develop some duality between certain ``modular forms'' and the Hecke algebras (cf. Proposition \ref{prop: duality}).
Also, we should understand ``the difference'' between the order of $[C_0^\bde]$ and 
the residue of the Eisenstein series $E_0^\bde$ at a certain cusp. 
As we already studied (Lemmas \ref{lemma: order} and \ref{lemma: residue}), their ratio is $\prod_{j=t+1}^u \ell_j$ (up to $2$). This discrepancy is a real obstacle.

Finally, $J_0(N)$ does not have semistable reduction at $p$, i.e., 
the special fiber $\cJ_p$ of the N\'eron model does have a non-trivial unipotent part. 
If we could prove that the kernel of $\fm_0^\bde$ on this unipotent part of $\cJ_p$ is trivial, then all the arguments in Theorem \ref{thm: multi one} work verbatim. However, the author does not know this yet.
\end{remark}

\subsection{Acknowledgments}
The author would like to thank Kenneth Ribet for his inspired suggestions and comments. 
He is also grateful to Myungjun Yu for many suggestions toward the correction and improvement of this article.
Finally, he would like to thank the anonymous referee for numerous corrections, suggestions and valuable remarks.
This work was supported the Seoul National University Research Grant in 2022 and by National Research Foundation of Korea(NRF) grant funded by the Korea government(MSIT) (No. 2019R1C1C1007169 and No. 2020R1A5A1016126). 

\ms

%\address 


\begin{thebibliography}{99} 
%\bibitem{ARS12} A. Agashe, K. Ribet and W. Stein, \emph{The modular degree, congruence primes, and multipicity one}, Number Theory, Analysis and Geometry, In Memory of Serge Lang, Springer (2012), 19--50.

%\bibitem{Ban14} D. Banerjee, \emph{A note on Eisenstein elements of prime square level}, Proc. Amer. Math. Soc., Vol. \textbf{142} (2014), 3675--3686.

%\bibitem{Ber76} V. G. Berkovich, \emph{The rational points on the Jacobians of modular curves}, Mat. Sb. (N.S.), \textbf{101(143)}:4(12) (1976), 542--567. Translation in Math. USSR Sb., \textbf{30}, no. 4 (1976), 478--500. 

%\bibitem{BD1} N. Billerey and L.V. Dieulefait, \emph{Explicit large image theorems for modular forms}, J. Lond. Math. Soc. (2), \textbf{89}(2) (2014), 499--523.

%\bibitem{BM16} N. Billerey and R. Menares, \emph{On the modularity of reducible mod $\ell$ Galois representations}, Math. Res. Lett. \textbf{23}, no. 1 (2016), 15--41.

%\bibitem{BM2} N. Billerey and R. Menares, \emph{Strong modularity of reducible Galois representations}, Trans. Amer. Math. Soc. (to appear).

%\bibitem{Bir67} B.J. Birch, \emph{Cyclotomic fields and Kummer extensions}, Algebraic number theory (ed. by Cassels and Frohlich), Academic Press Inc. (London) LTD (1967).

\bibitem{BLR90} S. Bosch, W. L\"utkebohmert and M. Raynaud, \emph{N\'eron Models}, Ergeb. Math. Grenzgeb., 3. Folge, Band \textbf{21}, Springer-Verlag (1990).

%\bibitem{BLR91} N. Boston, H. Lenstra and K. Ribet, \emph{Quotients of group rings arising from two-dimensional representations}, C.R. Acad. Sci. Paris, \textbf{312}, S\'erie I (1991), 323--328.

\bibitem{Box} J. Box, \emph{Quadratic points on modular curves with infinite Mordell--Weil group}, Math. Comp., Vol. \textbf{90}, No. 327 (2021), 321--343.

%\bibitem{BK14} A. Brumer and K. Kramer, \emph{Paramodular abelian varieties of odd conductor}, Trans. Amer. Math. Soc., Vol. \textbf{366} (2014), 2463--2516.

%\bibitem{Bu97} K. Buzzard, \emph{Integral models of certain Shimura curves}, Duke Math. J., Vol. \textbf{87}, no. 3 (1997), 591--612.
   
\bibitem{CE05} F. Calegari and M. Emerton, \emph{On the ramification of the Hecke algebras at Eisenstein primes}, Invent. Math., \textbf{160} (2005), 97--144.

%\bibitem{CS08} F. Calegari and W. Stein, \emph{A Non-Gorenstein Eisenstein Descent}, preprint available at \url{http://www.wstein.org/home/wstein/www/home/was/days/17/calegari-stein-tor.pdf} (2008).

%\bibitem{CV12} F. Calegari and A. Venkatesh, \emph{A torsion Janquet-Langlands correspondence}, preprint available at \url{http://arxiv.org/pdf/1212.3847v1.pdf} (2012).

%\bibitem{Ce76} I.V. Cerednik, \emph{Uniformization of algebraic curves by discrete arithmetic subgroups of $\mathrm{PGL}_2(k_w)$ with compact quotients (in Russian)}, Math. Sb., \textbf{100}, 59--88 (1976). Translation in Math. USSR Sb., \textbf{29} (1976), 55--78.

%\bibitem{Ces} K. Cesnavicius, \emph{The Manin constant in semistable case}, preprint.

%\bibitem{Ces17} K. Cesnavicius, \emph{A modular description of $X_0(n)$}, Algebra Number Theory, \textbf{11}(9) (2017), 2001--2089.

%\bibitem{CL97} S.-K. Chua and S. Ling, \emph{On the rational cuspidal subgroup and the rational torsion points of $J_0(pq)$}, Proc. Amer. Math. Soc., Vol. \textbf{125}, no. 8 (1997), 2255--2263.

%\bibitem{Con97} B. Conrad, \emph{The flat deformation functor}, Modular forms and Fermat's last theorem (Boston, MA, 1995) (1997), 373--420.

%\bibitem{Con99} B. Conrad, \emph{Finite group schemes over bases with low ramification}, Compos. Math. \textbf{119}, no. 3 (1999), 239--326.

%\bibitem{Con03} B. Conrad, \emph{Classification of quasi-finite \'etale separated schemes}, handout for VIGRE seminar, available at \url{http://math.stanford.edu/~conrad/vigregroup/vigre03/zmt.pdf} (2004).

%\bibitem{Con07} B. Conrad, \emph{Arithmetic moduli of generalized elliptic curves}, J. Inst. Math. Jussieu, \textbf{6}(2) (2007), 209--278.

%\bibitem{Con11} B. Conrad, \emph{Semistable reduction for abelian varieties}, handout for Mordell seminar, available at \url{http://math.stanford.edu/~conrad/mordellsem/Notes/L13.pdf} (2011).

%\bibitem{Cre} J.E. Cremona, \emph{Elliptic curve data}, available at \url{http://johncremona.github.io/ecdata/}.

%\bibitem{DDT} H. Darmon, F. Diamond and R. Taylor, \emph{Fermat's Last Theorem}, Elliptic curves, modular forms and Fermat's last theorem (Hong Kong, 1993), International press, Cambridge (1995), 2--140.

\bibitem{DR73} P. Deligne and M. Rapoport, \emph{Les sch\'emas de modules de courbes elliptiques}, Modular functions of one variable II, Lecture Notes in Math., Vol. \textbf{349} (1973), 143--316.

%\bibitem{DS74} P. Deligne and J.-P. Serre, \emph{Formes modulaires de poid $1$}, Ann. Sci. \'Ec. Norm, Sup\'er., IV. Ser. \textbf{7} (1974), 507--530.

\bibitem{DS05}  F. Diamond and J. Shurman, \emph{A first course in modular forms}, Graduate Text in Math., Vol. \textbf{228}, Springer (2005). 

%\bibitem{DT94}  F. Diamond and R. Taylor, \emph{Non-optimal levels of mod $\ell$ modular representations}, Invent. Math. \textbf{115} (1994), 435--462.

\bibitem{Dr73} V. Drinfeld, \emph{Two theorems on modular curves}, Functional Anal. Appl., \textbf{7} (1973), 155--156. 

%\bibitem{Dr76} V. Drinfeld, \emph{Coverings of $p$-adic symmetric regions (in Russian)}, Funkts. Anal. Prilozn, \textbf{10} (1976), 29--40. Translation in Funct. Anal. Appli., \textbf{10} (1976), 107--115 .

\bibitem{Ed91} B. Edixhoven, \emph{L'action de l'alg\`ebre de Hecke sur les groupes de composantes des jacobiennes des courbes modulaires est ``Eisenstein"}, Courbes modulaires et courbes de Shimura (Orsay, 1987/1988), Ast\'erisque, No. \textbf{196-197} (1991), 159--170.

%\bibitem{Ed92} B. Edixhoven, \emph{The weight in Serre's conjectures on modular forms}, Invent. Math.  \textbf{109} (1992), 563--594.

%\bibitem{FJ95} G. Faltings and B. W. Jordan, \emph{Crystalline cohomology and $\GL(2, \Q)$}, Israel J. Math. \textbf{90} (1995), 1--66.

%\bibitem{Gr90} B. Gross, \emph{A tameness criterion for Galois representations associated to modular forms (mod $p$)}, Duke Math. J., Vol. \textbf{61}, no 2 (1990), 445--517. 

%\bibitem{GL86} B. Gross and J. Lubin, \emph{The Eisenstein descent on $J_0(N)$}, Invent. Math.  \text{83} (1986), 303--319.

%\bibitem{Gro72} A. Grothendieck, \emph{Mod\`eles de N\'eron et monodromie} (SGA 7 I. Expos\'e IX), Lecture Notes in Math., Vol. \textbf{288} (1972), 313--523.

\bibitem{GYYY} J. Guo, Y. Yang, H. Yoo and M. Yu, \emph{The rational cuspidal subgroup of $J_0(p^2M)$ with $M$ squarefree}, Math. Nachr. (to appear).


%\bibitem{Hm} D. Helm, \emph{On maps between modular Jacobians and Jacobians of Shimura curves}, Israel J. Math. \textbf{160} (2007), 61--117.

%\bibitem{Her} J. Herbrand, \emph{Sur les classes des corps circulaires}, J. Math. Pures Appl. (9) \textbf{11} (1932), 417--441.

%\bibitem{Ig59} J.-I. Igusa, \emph{Kroneckerian model of fields of elliptic modular functions}, Amer. J. Math., Vol. \textbf{81} (1959), 561--577.

%\bibitem{JL86} B. W. Jordan and R. A. Livn\'e, \emph{On the N\'eron models of Jacobians of Shimura curves}, Compos. Math. \textbf{60}, no 2. (1986), 227--236.

\bibitem{Ka73} N. Katz, \emph{$p$-adic properties of modular schemes and modular forms}, Modular functions of one variable III, Lecture Notes in Math., Vol. \textbf{350} (1973), 69--190.

\bibitem{Ka81} N. Katz, \emph{Galois properties of torsion points on abelian varieties}, Invent. Math., \textbf{62} (1981), 481--502.


%\bibitem{KM85} N. Katz and B. Mazur, \emph{Arithmetic moduli of elliptic curves}, Princeton Univ. Press, Ann. of Math. Stud., Vol. \textbf{108} (1985).

%\bibitem{Kil02} L.J.P. Kilford, \emph{Some non-Gorenstein Hecke algebras attached to spaces of modular forms}, J. Number Theory, \textbf{97} no. 1 (2002), 157--164.

\bibitem{KY18} T. Kim and H. Yoo, \emph{The action of the Hecke operators on the component groups of modular Jacobian varieties}, Pacific. J. Math., Vol. \textbf{296}, No. 2 (2018), 341--355.

\bibitem{Li75} G. Ligozat, \emph{Courbes modulaires de genre 1}, Bull. Soc. Math. France, M\'emoire, tome \textbf{43} (1975), 5--80.

%\bibitem{Lg95} S. Ling, \emph{Shimura subgroups and degeneracy maps}, J. Number theory, \textbf{54}(1) (1995), 39--59.

\bibitem{Li97} S. Ling, \emph{On the $\Q$-rational cuspidal subgroup and the component group of $J_0(p^r)$}, Israel J. Math., \textbf{99} (1997), 29--54.

%\bibitem{LO91} S. Ling and J. Oesterl\'{e}, \emph{The Shimura subgroup of $J_0(N)$}, Courbes modulaires et courbes de Shimura (Orsay, 1987/1988), Ast\'erisque, No. \textbf{196-197} (1991), 171--203.

\bibitem{Lo95} D. Lorenzini, \emph{Torsion points on the modular Jacobian $J_0(N)$}, Compos. Math., tome \textbf{96}, no. 2 (1995), 149--172.

\bibitem{Ma72} Y. Manin, \emph{Parabolic points and zeta functions of modular curves (in Russian)}, Izv. Akad. Nauk SSSR Ser. Mat., \textbf{36} (1972), 19--66. Translation in Math USSR-Izv, \textbf{6} (1972), 19--64.

\bibitem{M77} B. Mazur, \emph{Modular curves and the Eisenstein ideal}, Publ. Math. Inst. Hautes \'Etudes Sci., tome \textbf{47} (1977), 33--186.

%\bibitem{M78} B. Mazur, \emph{Rational isogenies of prime degree}, Invent. Math.  \textbf{44} (1978), 129--162.

\bibitem{MR91} B. Mazur and K. Ribet, \emph{Two-dimensional representations in the arithmetic of modular curves}, Courbes modulaires et courbes de Shimura (Orsay, 1987/1988), Ast\'erisque, No. \textbf{196-197} (1991), 215--255.

%\bibitem{MW84} B. Mazur and A. Wiles, \emph{Class fields of abelian extensions of $\Q$}, Invent. Math.  \textbf{76} (1984), 170--330.

%\bibitem{Ne57} M. Newman, \emph{Construction and application of a class of modular functions}, Proc. Lond. Math. Soc., Vol. \textbf{3-7} (1) (1957), 334--350.

%\bibitem{Ne59} M. Newman, \emph{Construction and application of a class of modular functions (II)}, Proc. Lond. Math. Soc., Vol. \textbf{3-9} (3) (1959), 373--387.

%\bibitem{Og73} A. Ogg, \emph{Rational points on certain elliptic modular curves}, Proc. Sympos. Pure Math., \textbf{24}, AMS, Providence, R. I. (1973), 221--231.

%\bibitem{Og74} A. Ogg, \emph{Hyperelliptic modular curves}, Bull. Soc. Math. France, \textbf{102} (1974), 449--462.

\bibitem{Og75} A. Ogg, \emph{Diophantine equations and modular forms}, Bull. Amer. Math. Soc., Vol. \textbf{81} (1975), 14--27.

\bibitem{Oh14} M. Ohta, \emph{Eisenstein ideals and the rational torsion subgroups of modular Jacobian varieties II}, Tokyo J. Math., Vol. \textbf{37}, No. 2 (2014), 273--318. 

\bibitem{OS19} E. Ozman and S. Siksek, \emph{Quadratic points on modular curves}, Math. Comp., Vol. \textbf{88}, No. 319 (2019), 2461--2484.

\bibitem{Po87} D. Poulakis, \emph{La courbe modulaire $X_0(125)$ et sa jacobienne}, J. Number theory, Vol. \textbf{25} (1987), 112--131.

\bibitem{Ra70} M. Raynaud, \emph{Sp\'ecialization du foncteur de Picard}, Publ. Math. Inst. Hautes \'Etudes Sci., tome \textbf{38} (1970), 27--76.

%\bibitem{Ra74} M. Raynaud, \emph{Sch\'emas en groupes de type $(p, \dots, p)$}, Bull. Soc. Math. France, \textbf{102} (1974), 241--280.

%\bibitem{R76} K. Ribet, \emph{A modular construction of unramified $p$-extensions of $\Q(\mu_p)$}, Invent. Math. \textbf{34}, no. 3 (1976), 151--162.

%\bibitem{R76a} K. Ribet, \emph{Galois action on division points of Abelian varieties with real multiplications}, Amer. J. Math., Vol. \textbf{98} (1976), 751--804.

%\bibitem{R83} K. Ribet, \emph{Congruence relations between modular forms}, Proceeding of the ICM, Vol. \textbf{1}, \textbf{2} (Warsaw, 1983) (1983), 503--514.

%\bibitem{R88} K. Ribet, \emph{On the component groups and the Shimura subgroup of $J_0(N)$}, S\'eminaire de Th\'eorie des Nombres, Vol. \textbf{16} (1988), 1--10.

%\bibitem{R88a} K.Ribet, \emph{Torsion points on $J_0(N)$ and Galois representations}, Arithmetic theory of elliptic curves (Cetraro, 1997), Lectures in Math., \textbf{1716}, Springer, Berlin (1999), 145--166.

%\bibitem{R89} K. Ribet, \emph{The old subvariety of $J_0(pq)$}, Arithmetic algebraic geometry (Texel, 1989), \textbf{89}, 293--307.

%\bibitem{R89a} K. Ribet, \emph{Bimodules and abelian surfaces}, Algebraic number theory. Adv. Stud. Pure Math., \textbf{17} (1989), 359--407.

%\bibitem{R90} K. Ribet, \emph{On modular representations of $\Gal(\overline \Q/{\Q})$ arising from modular forms}, Invent. Math.  \textbf{100}, no. 2 (1990), 431--476.

%\bibitem{R90a} K. Ribet, \emph{Multiplicities of Galois representations in Jacobians of Shimura curves}, Israel Math. conference proceeding, \textbf{3} (1990), 221--236.

%\bibitem{R94} K. Ribet, \emph{Report on mod $\ell$ representations of $\Gal(\ov{\Q}/\Q)$}

%\bibitem{R97} K. Ribet, \emph{Irreducible Galois representations arising from component groups of Jacobians}, Elliptic curves, modular forms and Fermat's last theorem (Hong Kong, 1993), International press, Cambridge (1997), 131--147.

%\bibitem{R08} K. Ribet, \emph{Eisenstein primes for $J_0(pq)$}, unpublished (2008).

%\bibitem{R10} K. Ribet, \emph{Non-optimal levels of mod $\ell$ reducible Galois representations}, CRM Lecture note available at \url{http://math.berkeley.edu/~ribet/crm.pdf} (2010).

%\bibitem{RY} K. Ribet and H. Yoo, \emph{On the kernel of an Eisenstein prime at square-free levels}, preprint.

\bibitem{Re18} Y. Ren, \emph{Rational torsion subgroups of modular Jacobian varieties}, J. Number Theory, Vol. \textbf{190} (2018), 169--186.

%\bibitem{MOp1} J. Rouse, a post on Mathoverflow, \url{https://mathoverflow.net/questions/315266/the-mathbbq-rational-cuspidal-group-of-j-0n}.

\bibitem{RW15} J. Rouse and J. J. Webb, \emph{On spaces of modular forms spanned by eta-quotients}, Advances in Math., \textbf{272} (2015), 200--224.

%\bibitem{Sch05} R. Schoof, \emph{Abelian varieties over $\Q$ with bad reduction in one prime only}, Compos. Math. \textbf{141} (2005), 847--868.

%\bibitem{Se58} J.-P. Serre, \emph{Quelques propri\'et\'es des vari\'et\'es ab\'eliennes en caract\'eristique $p$}, Amer. J. Math., Vol. \textbf{80} (1958), 715--739.

%\bibitem{Se72} J.-P. Serre, \emph{Congruences et formes modulaires}, S\'eminaire N. Bourbaki, 1971-1972, exp. $\text{n}^{\circ}$ \textbf{416} (1972), 319--338.

%\bibitem{Se87} J.-P. Serre, \emph{Sur les repr\'esentations modulaires de degr\'e 2 de $\Gal(\overline{\Q}/{\Q})$}, Duke Math. J., Vol. \textbf{54}, no 1 (1987), 179--230. 

\bibitem{ST68} J.-P. Serre and J. Tate, \emph{Good reduction of abelian varieties}, Ann. of Math., \textbf{88} (1968), 492--517.

%\bibitem{Shi71} G. Shimura, \emph{Introduction to the arithmetic theory of automorphic forms}, Publ. Math. Soc. Japan, Vol. \textbf{11}, Tokyo-Princeton (1971).

%\bibitem{Sk05} A. Skorobogatov, \emph{Shimura coverings of Shimura curves and the Manin obstruction}, Math. Res. Lett. \textbf{12} (2005), 779--788. 

%\bibitem{Sage} W. Stein at al., \emph{Sage mathematics software (Version 8.0, 2017-07-21)}, The Sage development team, \url{http://www.sagemath.org}.

%\bibitem{St} W. Stein, \emph{The cuspidal subgroup of $J_0(N)$}, \url{https://wstein.org/Tables/cuspgroup/index.html}.


%\bibitem{St} W. Stein, \emph{The modular forms database:Tables}, available at \url{http://modular.math.washington.edu/Tables/an_s2g0new_851-900_hiprec.gp}.

%\bibitem{Ste82} G. Stevens, \emph{Arithmetic on modular curves}, Progress in Math., Vol. \textbf{20}, Birkha\"uuser, Boston, Mass. (1982).

%\bibitem{Ste85} G. Stevens, \emph{The cuspidal group and special values of $L$-functions}, Trans. Amer. Math. Soc., Vol. \textbf{291} (1985), 519--550.

%\bibitem{Sw73} H.P.F. Swinnerton-Dyer, \emph{On $\ell$-adic representations and congruences for coefficients of modular forms}, Modular functions of one variable III, Lecture Notes in Math., Vol. \textbf{350} (1973), 1--55.

\bibitem{Ta97} T. Takagi, \emph{The cuspidal class number formula for the modular curves $X_0(M)$ with $M$ square-free}, J. Algebra, \textbf{193} (1997), 180--213.

%\bibitem{Ta14} T. Takagi, \emph{The $\Q$-rational cuspidal group of $J_1(2p)$}, J. Math. Soc. Japan, Vol. \textbf{66}, No. 4 (2014), 1249--1301.

%\bibitem{Ta97} J. Tate, \emph{Finite flat group schemes}, Modular forms and Fermat's last theorem (Boston, MA, 1995) (1997), 121--154.

%\bibitem{TW95} R. Taylor and A. Wiles, \emph{Ring theoretic properties of certain Hecke algebras}, Ann. of Math.,  \textbf{142} (1995), 553--572.

%\bibitem{Ti97} J. Tilouine, \emph{Hecke algebras and the Gorenstein property}, Modular forms and Fermat's last theorem (Boston, MA, 1995) (1997), 327--342.

%\bibitem{Va05} V. Vatsal, \emph{Multiplicative subgroups of $J_0(N)$ and applications to elliptic curves}, J. Inst. of Math. Jussieu, \textbf{4}(2) (2005), 281--316.

%\bibitem{Vig80} M.-F. Vign\'eras, \emph{Arithm\'{e}tique des alg\`{e}bres de quaternions}, Lecture Notes in Math., Vol. \textbf{800} (1980).

\bibitem{WWE21} P. Wake and C. Wang-Erickson, \emph{The Eisenstein ideal with squarefree level}, Advances in Math., \textbf{380} (2021) 107543.

\bibitem{WY20} L. Wang and Y. Yang, \emph{Modular units and cuspidal divisor classes on $X_0(n^2M)$ with $n|24$ and $M$ squarefree}, J. Algebra, \textbf{562} (2020), 410--432.

%\bibitem{Wi80} A. Wiles, \emph{Modular curves and the class group of $\Q(\z_p)$}, Invent. Math.  \textbf{58} (1980), 1--35.

\bibitem{Wi95} A. Wiles, \emph{Modular elliptic curves and Fermat's Last Theorem}, Ann. of Math.,  \textbf{142} (1995), 443--551.

%\bibitem{Yaz11} S. Yazdani, \emph{Modular abelian varieties of odd modular degree}, Algebra Number Theory, Vol. \textbf{5}(1) (2011), 37--62.


%\bibitem{Y13} H. Yoo, \emph{Modularity of residually reducible Galois representations and Eisenstein ideals}, UC Berkeley Thesis, 2013.

\bibitem{Yoo1} H. Yoo, \emph{The index of an Eisenstein ideal and multiplicity one}, Math. Z., Vol. \textbf{282}(3) (2016), 1097--1116.

%\bibitem{Yoo2} H. Yoo, \emph{Non-optimal levels of a reducible mod $\ell$ modular representation}, Trans. Amer. Math. Soc. (to appear), arxiv:1409.8342v5 [math.NT].

%\bibitem{Yoo3} H. Yoo, \emph{On Eisenstein ideals and the cuspidal group of $J_0(N)$}, Israel J. Math. \textbf{214} (2016), 359--377.

%\bibitem{Yoo4} H. Yoo, \emph{Rational torsion points on Jacobians of Shimura curves}, Bull. Lond. Math. Soc., Vol. \textbf{48}(1) (2016), 163--171.

\bibitem{Yoo5} H. Yoo, \emph{Rational torsion points on Jacobians of modular curves}, Acta Arith., \textbf{3472} (2016), 299--304.

\bibitem{Yoo6} H. Yoo, \emph{On rational Eisenstein primes and the rational cuspidal groups of modular Jacobian varieties}, Trans. Amer. Math. Soc., Vol. \textbf{372}(4) (2019) 2429--2466.

%\bibitem{Yoo7} Hwajong Yoo, \emph{On the kernel of a rational Eisenstein prime at non-squarefree level}, submitted, available at \url{https://arxiv.org/pdf/1712.01717.pdf}.

\bibitem{Yoo9} H. Yoo, \emph{The rational cuspidal divisor class group of $X_0(N)$}, J. Number theory, Vol. \textbf{242} (2023), 278--401.

\bibitem{YY} H. Yoo and M. Yu, \emph{The rational cuspidal subgroup of $J_0(N)$}, preprint (2022).
\end{thebibliography}
\end{document}